\documentclass[10pt]{amsart}
 
\usepackage{amsthm,amsmath,amssymb}
\usepackage{graphicx}
\usepackage[comma, sort&compress]{natbib}
\usepackage{tabularx}
\usepackage{multirow}
\usepackage{amssymb}
\usepackage{tikz}
\usepackage{xcolor}
\usetikzlibrary{shapes,arrows,positioning}

\usepackage{paralist}
\RequirePackage{hyperref}

\linespread{1.1}

\newtheorem{theorem}{Theorem}

\newtheorem{lemma}[theorem]{Lemma}

\newtheorem{property}[theorem]{Property}

\theoremstyle{definition} 

\theoremstyle{remark}

\numberwithin{equation}{section}
\numberwithin{theorem}{section}
\numberwithin{example}{section}
\numberwithin{definition}{section}

\numberwithin{figure}{section}


\DeclareMathOperator{\bE}{\mathbb{E}}
\DeclareMathOperator{\bR}{\mathbb{R}}
\DeclareMathOperator{\cX}{\mathcal{X}}
\DeclareMathOperator{\cE}{\mathcal{E}}
\DeclareMathOperator{\barPhi}{\bar{\Phi}}
\DeclareMathOperator{\barDelta}{\bar{\Delta}}

\newcommand{\secref}[1]{Section~\ref{sec:#1}}
\newcommand{\appref}[1]{Appendix~\ref{app:#1}}
\newcommand{\appsref}[1]{Appendices~\ref{app:#1}}
\newcommand{\appssref}[1]{\ref{app:#1}}

\newcommand{\lemref}[1]{Lemma~\ref{lem:#1}}
\newcommand{\lemsref}[1]{Lemmas~\ref{lem:#1}}
\newcommand{\lemssref}[1]{\ref{lem:#1}}

\newcommand{\thmref}[1]{Theorem~\ref{thm:#1}}

\newcommand{\propertyref}[1]{Property~\ref{property:#1}}



\title[]{Another look at Stein's method for Studentized nonlinear statistics with an application to U-statistics}

\author[D.~Leung]{Dennis Leung} 
\address{School of Mathematics and Statistics, University of Melbourne}
\email{dennis.leung@unimelb.edu.au}

\author[Q.~Shao]{Qi-Man Shao} 
\address{Department of Statistics and Data Science, SICM, National Center for Applied Mathematics Shenzhen, Southern University of Science and Technology}
\email{shaoqm@sustech.edu.cn}

\author[L.~Zhang]{Liqian Zhang} 
\address{School of Mathematics and Statistics, University of Melbourne}
\email{liqian@student.unimelb.edu.au}

\begin{document}

\begin{abstract}
We take another  look at using Stein's method  to establish uniform Berry-Esseen bounds for Studentized nonlinear statistics, highlighting variable censoring and  an exponential randomized concentration  inequality for a sum of censored variables as the essential  tools to carry the arguments involved.  As an important application, we prove a uniform Berry-Esseen bound for Studentized U-statistics in a form that  exhibits the dependence on the degree of the kernel.
\end{abstract}

\keywords{Stein's method, Studentized nonlinear statistics, variable censoring, randomized concentration inequality, U-statistics, self-normalized limit theory, uniform Berry-Esseen bound}

\subjclass[2000]{62E17}

\maketitle

\section{Introduction} \label{sec:intro}

We revisit the use of Stein's method to prove uniform Berry-Esseen (B-E) bounds for Studentized nonlinear statistics. Let $X_1, \dots, X_n$ be independent random variables that serve as some raw data, and for each $i =1, \dots, n$, let
\begin{equation} \label{xi_i_def}
\xi_i \equiv g_{n, i} (X_i)  
\end{equation}
for a function $g_{n, i}(\cdot)$ that can also depend on $i$ and $n$, such that
\begin{equation} \label{assumptions}
\bE[\xi_i] = 0 \text{ for all }i \text{ and }\sum_{i=1}^n\bE[\xi_i^2] = 1. 
\end{equation}
A  \emph{Studentized nonlinear statistic} is an asymptotically normal statistic  that can be represented in the general form
\begin{equation} \label{sn_stat}
T_{SN} \equiv \frac{W_n + D_{1n}}{(1+ D_{2n})^{1/2}},
\end{equation}
with $W_n \equiv \sum_{i=1}^n \xi_i$, where the ``remainder" terms
\begin{equation}\label{D1D2_as_fn_of_data}
D_{1n} = D_{1n} (X_1, \dots, X_n) \text{ and } D_{2n} = D_{2n} (X_1, \dots, X_n)
\end{equation}
are  some functions of the data, with the additional properties that
\begin{equation} \label{Dj_generic_properties}
D_{1n}, D_{2n} \longrightarrow 0 \text{ in probability as $n$ tends to $\infty$}, \text{ and } D_{2n} \geq -1 \text{ almost surely}.
\end{equation}
We adopt the convention that if $1 + D_{2n} = 0$, the value of $T_{SN}$ is taken to be $0$, $+ \infty$ or $-\infty$ depending on the sign of $W_n + D_{1n}$. 
Such a statistic  is a generalization of the classical Student's t-statistic \citep{student1908probable}, where the denominator $1 + D_{2n}$ acts as a data-driven ``self-normalizer" for the numerator $W_n + D_{1n}$.

Many statistics used in practice can  be seen as  examples of \eqref{sn_stat}, hence developing a general Berry-Esseen-type inequality for $T_{SN}$  is  relevant to many  applications. The first such attempt based on Stein's method can be found in the semi-review article of \citet{shao2016stein}, whose proof critically relies upon an exponential-type randomized concentration inequality  first appearing in \citet{shao2010stein}. However, while their methodology is sound, there are numerous gaps; most notably,  \citet{shao2016stein} overlooked that the original  exponential-type randomized concentration inequality of \citet{shao2010stein} is developed for a sum of independent random variables with mean zero, which is not well-suited for their  proof  wherein the truncated summands generally do not have mean 0. In fact, truncation itself is an insufficient device to carry  the arguments involved, as will be explained in this article. 

Our contributions are twofold. First,  we put the methodology of \citet{shao2016stein} on solid footing; this, among other things, is accomplished by adopting variable \emph{censoring} instead of truncation, as well as developing a modified  randomized concentration inequality for a sum of censored variables, to rectify the gaps in their arguments. We also present a more user-friendly B-E bound for the statistic $T_{SN}$ when the denominator remainder $D_{2n}$ admits a certain standard form. Second, as an application to a prototypical example of Studentized nonlinear statistics, we establish a uniform B-E bound of the rate $1/\sqrt{n}$ for Studentized U-statistics whose dependence on the  degree of the kernel is also explicit; all prior  works in this vein only treat the simplest case with a kernel of degree 2. 
This bound is the most optimal known to date,  and serves to complete the literature in uniform B-E bounds for Studentized U-statistics.  

{\bf \emph{Notation}}.   $\Phi(\cdot)$ is the standard normal distribution function  and $\bar{\Phi}(\cdot) = 1- \Phi(\cdot)$. The indicator function is denoted by $I(\cdot)$. For $p \geq 1$, $\|Y\|_p \equiv (\bE[|Y|^p])^{1/p}$ for a random variable $Y$. For any $a, b \in \bR$, $a \vee b = \max(a, b)$ and $a \wedge b = \min(a, b)$. $C, C_1, C_2 \cdots..$ denotes positive \emph{absolute} constants that may differ in value from place to place, but does not depend on other quantities nor the distributions of the random variables. For two (possibly multivariate) random variables $Y_1$ and $Y_2$, ``$Y_1 =_d Y_2$" means $Y_1$ and $Y_2$ have the same distribution. 

\section{General Berry-Esseen bounds for Studentized nonlinear statistics}

Let $\xi_1, \dots, \xi_m$  be as in \secref{intro} that satisfy the assumptions in \eqref{assumptions}. 
For each $i = 1, \dots, n$, define 
\begin{equation}\label{lower_and_upper_censored_xi_i}
\xi_{b, i} \equiv \xi_i I(|\xi_i| \leq 1) +  I(\xi_i >1) -   I(\xi_i <-1),
\end{equation}
an upper-and-lower \emph{censored} version of $\xi_i$, and their  sum 
\begin{equation} \label{W_b_def}
W_b = W_{b, n}\equiv \sum_{i=1}^n \xi_{b, i}.
\end{equation}
 Moreover, for each $i = 1, \dots, n$, we define $W_b^{(i)} \equiv W_b - \xi_{b, i}$ and $ W_n^{(i)} \equiv W_n - \xi_i$. 
We also let 
\[
\beta_2 \equiv \sum_{i=1}^n \bE[\xi_i^2 I(|\xi_i| > 1)] \text{ and } \beta_3 \equiv \sum_{i=1}^n \bE[|\xi_i|^3 I(|\xi_i| \leq 1)].
\]
For any $x \in \bR$,  
\begin{equation} \label{fx}
f_x (w) \equiv
  \begin{cases} 
  \sqrt{2 \pi} e^{w^2/2} \Phi(w)\bar{\Phi}(x) &  w \leq x\\
 \sqrt{2 \pi} e^{w^2/2} \Phi(x) \bar{\Phi}(w)        &  w > x
     \end{cases};
\end{equation}
is the solution to the Stein equation \citep{stein1972bound}
\begin{equation} \label{steineqt}
f_x'(w) - wf_x(w) = I(w \leq x) - \Phi(x).
\end{equation}
Our first result is the following  uniform Berry-Esseen bound  for the Studentized nonlinear statistic in \eqref{sn_stat}: 

\begin{theorem}[Uniform B-E bound for  Studentized nonlinear  statistics] \label{thm:main}
Let $X_1, \dots, X_n$ be independent random variables. 
Consider the Studentized nonlinear statistic $T_{SN}$ in \eqref{sn_stat}, constructed with the linear summands  in  \eqref{xi_i_def} that satisfy the condition in \eqref{assumptions}, and the remainder terms in \eqref{D1D2_as_fn_of_data} that satisfy the condition in \eqref{Dj_generic_properties}. There exists a positive absolute constant $C > 0$ such that
\begin{multline} \label{generic_overarching_BE_bdd}
\sup_{x \in \mathbb{R}} \Big|P(T_{SN} \leq x) - \Phi(x)\Big| \leq  \sum_{j=1}^2P(|D_{jn}| > 1/2)  \\
+ C \Bigg\{\beta_2  + \beta_3  +  \|\bar{D}_{1n}\|_2 +  \bE\Big[(1 +e^{W_b}) \bar{D}_{2n}^2\Big] + 
\sup_{x \geq 0} \Big|x \bE[\bar{D}_{2n} f_x(W_b)]\Big| \\
+  \sum_{j=1}^2
\sum_{i=1}^n \Big( \bE[\xi^2_{b, i}] \Big\| (1+ e^{W_b^{(i)}})(\bar{D}_{jn} - \bar{D}_{jn}^{(i)} )\Big\|_1 +  \Big\|  \xi_{b, i} ( 1+e^{W_b^{(i)}/2} ) ( \bar{D}_{jn} - \bar{D}_{jn}^{(i)})   \Big\|_1\Big)
  \Bigg\}   ,
\end{multline}
where for each $j \in \{1, 2\}$ and each $i \in \{1, \dots, n\}$, 
\begin{itemize}
\item $D_{jn}^{(i)} \equiv D_{jn}^{(i)}(X_1, \dots, X_{i-1}, X_{i+1}, \dots, X_n)$ is any function in  the raw data except $X_i$;
\item $\bar{D}_{jn}$ is a censored version of  $D_{jn}$ defined as
\[
\bar{D}_{jn}\equiv  D_{jn}I\Big(|D_{jn}| \leq \frac{1}{2} \Big) + \frac{1}{2}I\Big(D_{jn} > \frac{1}{2}\Big) - \frac{1}{2} I\Big(D_{jn} <- \frac{1}{2}\Big);
\]
\item $\bar{D}^{(i)}_{jn}$ is a   censored version of  $D^{(i)}_{jn}$ defined as
\[
\bar{D}_{jn}^{(i)} \equiv  D_{jn}^{(i)}I\Big(|D_{jn}^{(i)}| \leq \frac{1}{2}\Big) +\frac{1}{2} I\Big(D_{jn}^{(i)} > \frac{1}{2}\Big) - \frac{1}{2}I\Big(D_{jn}^{(i)} <- \frac{1}{2}\Big).
\]

\end{itemize}
\end{theorem}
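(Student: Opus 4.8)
The plan is to follow the standard Stein's-method recipe for Studentized statistics, but organized entirely around the censored summands $\xi_{b,i}$ and the censored remainders $\bar D_{jn}$, rather than truncated versions. First I would reduce the event $\{T_{SN}\le x\}$ to an event expressed in terms of $W_b$. On the complement of $\bigcup_{j}\{|D_{jn}|>1/2\}$, which costs $\sum_j P(|D_{jn}|>1/2)$, the censored remainders agree with the originals, $\bar D_{jn}=D_{jn}$, and $1+D_{2n}>0$, so the definition of $T_{SN}$ gives
\[
\{T_{SN}\le x\} \cap \Big(\bigcap_j\{|D_{jn}|\le 1/2\}\Big) = \Big\{W_n \le x(1+\bar D_{2n})^{1/2} - \bar D_{1n}\Big\} \cap \Big(\bigcap_j\{|D_{jn}|\le 1/2\}\Big).
\]
Then I would pass from $W_n$ to $W_b$: since $\{W_n\ne W_b\}\subseteq\bigcup_i\{|\xi_i|>1\}$, the cost is $\sum_i P(|\xi_i|>1)\le \sum_i\bE[\xi_i^2 I(|\xi_i|>1)]=\beta_2$, absorbed into the bound. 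After a first-order Taylor expansion of $x(1+\bar D_{2n})^{1/2}$ around $\bar D_{2n}=0$, with the quadratic remainder controlled by $\bE[\bar D_{2n}^2]$ terms, the task becomes bounding
\[
\Big|P\big(W_b \le x + \tfrac{x}{2}\bar D_{2n} - \bar D_{1n}\big) - \Phi(x)\Big|
\]
uniformly in $x$; by symmetry (replacing $\xi_i$ by $-\xi_i$, $x$ by $-x$) it suffices to treat $x\ge 0$.

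The core step is the Stein-equation estimate. Writing $\Delta \equiv \tfrac{x}{2}\bar D_{2n}-\bar D_{1n}$ and applying the Stein identity \eqref{steineqt} with argument $w=W_b$ shifted by $\Delta$, I would write
\[
P(W_b \le x+\Delta) - \Phi(x) = \bE\big[f_x'(W_b) - W_b f_x(W_b)\big] + \bE\big[I(W_b\le x+\Delta) - I(W_b\le x)\big],
\]
and handle the two groups of terms separately. For $\bE[f_x'(W_b)-W_bf_x(W_b)]$ I would use the leave-one-out decomposition $W_b = W_b^{(i)}+\xi_{b,i}$, the independence of $\xi_{b,i}$ and $W_b^{(i)}$, and the defining properties of $f_x$ (its boundedness, the bound $|f_x(w)|\le C(1+e^{w})$-type growth captured by the $e^{W_b}$ factors appearing in the statement, and the Lipschitz/derivative bounds for $f_x$). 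The fact that $\xi_{b,i}$ does \emph{not} have mean zero — the very issue that broke the truncation argument in \citet{shao2016stein} — is precisely what forces the terms $\beta_2$ (to bound $|\bE[\xi_{b,i}]|=|\bE[\xi_iI(|\xi_i|>1)] - \bE[(\mathrm{sgn})\cdots]|$, estimated by $\bE[\xi_i^2I(|\xi_i|>1)]$) and the $e^{W_b^{(i)}}$-weighted expectations; here I would invoke the modified exponential randomized concentration inequality for sums of censored variables (the tool the paper advertises) to control quantities like $\bE[(1+e^{W_b^{(i)}})g(W_b^{(i)})]$ and the randomized-concentration remainder $\bE[I(W_b\le x+\Delta)-I(W_b\le x)]$, which is where the mixed terms $\bE[\xi_{b,i}^2](1+e^{W_b^{(i)}})\|\bar D_{jn}-\bar D_{jn}^{(i)}\|_1$ and $\|\xi_{b,i}(1+e^{W_b^{(i)}/2})(\bar D_{jn}-\bar D_{jn}^{(i)})\|_1$ in \eqref{generic_overarching_BE_bdd} come from — replacing $\bar D_{jn}$ by the $X_i$-free surrogate $\bar D_{jn}^{(i)}$ inside an expectation also involving $\xi_{b,i}$ or $W_b$ incurs exactly these costs.

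For the randomized-concentration term $\bE[I(W_b\le x+\Delta) - I(W_b\le x)]$, which measures how much probability mass $W_b$ puts in a random window of width $|\Delta|$, I would bound $|\Delta|\le \tfrac{x}{2}|\bar D_{2n}|+|\bar D_{1n}|$ and split into the $\bar D_{1n}$ contribution and the $\bar D_{2n}$ contribution. The $\bar D_{1n}$ part yields $\|\bar D_{1n}\|_2$ (via a concentration bound of the form $P(x-t\le W_b\le x+t)\le C(t+\beta_2+\beta_3)$ together with Cauchy–Schwarz), plus the leave-one-out replacement costs; the $\bar D_{2n}$ part, because it is weighted by $x$, must be handled by comparing against $x\bE[\bar D_{2n} f_x(W_b)]$ and accounts for both the $\bE[(1+e^{W_b})\bar D_{2n}^2]$ term and the explicit $\sup_{x\ge 0}|x\bE[\bar D_{2n}f_x(W_b)]|$ term in the statement. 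I expect the main obstacle to be this last piece: controlling the $x$-weighted window probability uniformly in $x\ge 0$ without losing a factor of $x$, which is exactly why one cannot simply bound things crudely and why the censored randomized concentration inequality — applied to $W_b^{(i)}$ with the leave-one-out gymnastics done carefully so that all conditioning is on $X_i$-free quantities — is indispensable. The remaining work (Taylor remainders, collecting the $\beta_2+\beta_3$ error from the basic concentration estimate, and bookkeeping the constants) is routine.
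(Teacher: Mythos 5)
Your high-level plan — censor, reduce to $W_b$, invoke Stein's method and the modified randomized concentration inequality — matches the paper's, but the specific decomposition you propose for the Stein-equation step would not reproduce the bound \eqref{generic_overarching_BE_bdd}, and this is where the real work lies.

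Writing $\Delta = \tfrac{x}{2}\bar D_{2n}-\bar D_{1n}$, you propose
\[
P(W_b \le x+\Delta) - \Phi(x) = \underbrace{\bE\big[f_x'(W_b) - W_b f_x(W_b)\big]}_{A} + \underbrace{\bE\big[I(W_b\le x+\Delta) - I(W_b\le x)\big]}_{B},
\]
i.e.\ you apply the Stein identity \eqref{steineqt} at the \emph{unshifted} argument $W_b$ and push all the $\Delta$-dependence into a standalone window term $B$. (You say ``shifted by $\Delta$'', but the displayed formula has no shift.) The trouble is that when you try to bound $B$, the random window has width $|\Delta|\le \tfrac{x}{2}|\bar D_{2n}| + |\bar D_{1n}|$, which is \emph{linear} in $\bar D_{2n}$. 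Running the randomized concentration inequality (\lemref{modified_RCI_bdd}) on this window, after the usual exponential-weight absorption of the factor $x$, yields a term of order $\bE[(1+e^{W_b})|\bar D_{2n}|]$. That is not what \eqref{generic_overarching_BE_bdd} asserts; the theorem has $\bE[(1+e^{W_b})\bar D_{2n}^2]$, which is substantially smaller, and the gap is essential (this is precisely what makes the U-statistic application deliver a $1/\sqrt n$ rate). Your remark that the $\bar D_{2n}$ part of $B$ ``must be handled by comparing against $x\bE[\bar D_{2n}f_x(W_b)]$'' gestures at the right cure, but supplies no mechanism: there is no algebraic route from the window probability $B$ to the bilinear quantity $x\bE[\bar D_{2n}f_x(W_b)]$ without essentially re-deriving the decomposition the paper actually uses.

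What the paper does instead is apply \eqref{steineqt} at the \emph{shifted} argument $W_b - \barDelta_{2n,x}$, writing
\[
P(W_b - \barDelta_{2n,x}\le x) - \Phi(x) = \bE[f_x'(W_b - \barDelta_{2n,x})] - \bE[W_b f_x(W_b - \barDelta_{2n,x})] + \bE[\barDelta_{2n,x} f_x(W_b - \barDelta_{2n,x})],
\]
then splits the last term via $\bE[\barDelta_{2n,x}(f_x(W_b - \barDelta_{2n,x}) - f_x(W_b))] + \bE[\barDelta_{2n,x} f_x(W_b)]$. Two things happen that your $A+B$ split destroys. First, the explicit term $\bE[\barDelta_{2n,x} f_x(W_b)]$ directly produces $\tfrac{x}{2}\bE[\bar D_{2n} f_x(W_b)]$ (plus a $\|\bar D_{1n}\|_2$ contribution), which is kept as an explicit, potentially favorably-cancelling term in the statement — it is never bounded crudely in absolute value. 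Second, and more importantly, when the $K$-function/leave-one-out machinery is run on $\bE[f_x'(W_b - \barDelta_{2n,x})] - \bE[W_b f_x(W_b - \barDelta_{2n,x})]$, the relevant comparisons are between $W_b - \barDelta_{2n,x}$ and $W_b^{(i)} - \barDelta_{2n,x,i^*}+t$: the shift $\barDelta_{2n,x}$ appears on \emph{both} sides and largely cancels, so the only window widths that show up in the randomized concentration step ($R_{12}$ in the paper's notation) are of order $|\xi_{b,i}| + |t| + |\barDelta_{2n,x} - \barDelta_{2n,x}^{(i')}|$ — single-summand and leave-one-out sizes, not $|\Delta|$. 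In your $A+B$ split, $f_x'(W_b)-f_x'(W_b-\Delta)$ has a unit jump across $w=x$, so the full width $|\Delta|$ can never be avoided.

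Related, your treatment of the Taylor remainder is too vague. The paper's two-sided bracketing
\[
\{W_n + D_{1n} - xD_{2n}/2 > x\}\subset \{T_{SN}>x\} \subset \{W_n + D_{1n} - xD_{2n}/2 > x\}\cup\{x + x(D_{2n}-D_{2n}^2)/2 < W_n + D_{1n} \le x + xD_{2n}/2\},
\]
deliberately exploits the cancellation between $1+s/2$ and $1+s/2-s^2/2$ so that the residual concentration event (the paper's \eqref{part1}) has width $\tfrac{x}{2}|\bar D_{2n}^2|$, which is quadratic; \emph{that} is the step that yields $\bE[(1+e^{W_b})\bar D_{2n}^2]$ after the RCI. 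You treat the Taylor remainder as something to ``control by $\bE[\bar D_{2n}^2]$ terms'' without setting up this inclusion, and then separately introduce the width-$|\Delta|$ window $B$, effectively paying for $\bar D_{2n}$ twice and once at the wrong order. You have the right tools (censoring, \lemref{modified_RCI_bdd}, the $f_x$ growth estimates) but the Stein step must be done at the shifted argument, and the Taylor-remainder bracketing must be done on the event level, for the proof to close.
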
 
In applications, $D_{1n}^{(i)}$ and $D_{2n}^{(i)}$ are typically  taken as  ``leave-one-out" quantities  constructed in almost identical manner as $D_{1n}$ and $D_{2n}$ respectively, but without any terms involving the datum $X_i$; for instance, compared $D_{1n}$ and $D_{1n}^{(i)}$ in \eqref{D1_ustat}  and \eqref{D1ni_def} below for the case of a U-statistic. 
The proof of \thmref{main} (\appref{main_pf}) bypasses the gaps in the proof of the original B-E bound for $T_{SN}$ stated in \citep[Theorem 3.1]{shao2016stein}.  As a key step in their approach to proving \citet[Theorem 3.1]{shao2016stein} based on Stein's method, the exponential-type randomized concentration inequality developed in \citet[Theorem 2.7]{shao2010stein} is applied to control a probability of the type
\[
P\left(\Delta_1 \leq \sum_{i=1}^n \xi_i I( |\xi_i| \leq 1 ) \leq \Delta_2\right), 
\]
where $\Delta_1$ and $\Delta_2$ are some context-dependent random quantities. Unfortunately, \citet{shao2016stein} overlooked that \citet[Theorem 2.7]{shao2010stein}  was originally developed for a sum of mean-0 random variables, such as $W_n$, instead of the  sum $\sum_{i=1}^n \xi_i I( |\xi_i| \leq 1 )$ figuring in the prior display, whose truncated summands do not have mean 0 in general. 
 The latter  needs to be  addressed in some way to mend their arguments, which leads to the  exponential randomized concentration inequality (\lemref{modified_RCI_bdd}) developed in this work for the sum $W_b$ in \eqref{W_b_def}. Here, the censored summands $\xi_{b, i}$'s are considered instead so that the  new inequality can still be proved in much the same way as \citet[Theorem 2.7]{shao2010stein}; replacing the truncated $\xi_i I( |\xi_i| \leq 1 )$ with the censored $\xi_{b, i}$ is otherwise permissible, because only the boundedness of the summands is essential under  the approach. 

The B-E bound stated in \thmref{main} is in  a primitive form. When applied to specific examples of $T_{SN}$, the various terms in \eqref{generic_overarching_BE_bdd}  have to be further estimated to render a more expressive bound. In that respect, the following apparent  properties of censoring will become very useful:

 \begin{property}[Properties of variable censoring] \label{property:censoring_property}
Let $Y$ and $Z$ be any two real-value variables. The following facts hold:
 
 \begin{enumerate}
 \item  Suppose, for some  $a, b \in \bR \cup \{-\infty, \infty\}$ with $a \leq b$, 
\[
\bar{Y} \equiv a I(Y < a) + Y I(a \leq Y \leq b)+bI(Y>b)
\] and 
\[
\bar{Z}  \equiv a I(Z < a) + Z I(a \leq Z \leq b)+bI(Z>b).
\]
Then it must be that 
$
|\bar{Y} - \bar{Z} | \leq |Y- Z|.
$
\item
If $Y$ is a non-negative random variable, then it must also be true that 
\[
Y I(0 \leq Y \leq b)+bI(Y>b) \leq Y \text{ for any } b \in (0, \infty), 
\]
i.e., the upper-censored version of $Y$ is always no larger than $Y$ itself.
 \end{enumerate}
\end{property} 
  In applications of \thmref{main}, that $\bar{D}_{1n}$ and $\bar{D}_{1n}^{(i)}$ are lower-and-upper censored  by the same interval $[-1/2, 1/2]$ implies the bound
 \begin{equation} \label{easy_bdd}
|\bar{D}_{1n} - \bar{D}_{1n}^{(i)}| \leq |{D}_{1n} - {D}_{1n}^{(i)}|, 
 \end{equation}
by virtue of \propertyref{censoring_property}$(i)$, as well as  
\begin{equation} \label{easy_easy_bdd}
|\bar{D}_1| \leq |D_1|
\end{equation}
by virtue of \propertyref{censoring_property}$(ii)$ because $|\bar{D}_1|$ is essentially the non-negative  $|D_1|$ upper-censored at $1/2$. 
These bounds imply one can form the further  norm estimates 
 \begin{equation} \label{D1_first_error_bdd}
 \| (1+ e^{W_b^{(i)}})(\bar{D}_{1n} - \bar{D}_{1n}^{(i)} )\|_1  \leq C \| D_{1n} - D_{1n}^{(i)}\|_2,
 \end{equation}
  \begin{equation} \label{D1_second_error_bdd}
\|  \xi_{b, i}( 1+ e^{W_b^{(i)}/2} ) ( \bar{D}_{1n} - \bar{D}_{1n}^{(i)})   \|_1 \leq  C \| \xi_i \|_2  \| D_{1n} - D_{1n}^{(i)}\|_2
 \end{equation}
 and 
 \begin{equation} \label{D1_third_error_bdd}
 \|\bar{D}_1\|_2 \leq   \| D_1\|_2, 
 \end{equation}
for  the terms in \eqref{generic_overarching_BE_bdd} related to the numerator remainder $D_1$; see \appref{main2_pf} for the simple arguments leading to these bounds. The right hand sides of \eqref{D1_first_error_bdd}-\eqref{D1_third_error_bdd} are then amenable to direct second moment calculations to render more expressive terms. We also remark that if, instead, the truncated remainder terms 
\begin{equation} \label{truncated_terms}
 D_{jn}I\Big(|D_{jn}| \leq \frac{1}{2} \Big) \text{ and }  D_{jn}^{(i)}I\Big(|D_{jn}^{(i)}| \leq \frac{1}{2} \Big), \text{ for } j =1, 2,
\end{equation}
are adopted  as in \citet[Theorem 3.1]{shao2016stein}, a bound analogous to \eqref{easy_bdd} does not hold in general; this also attests to censoring as a useful tool for developing nice B-E bounds under the current approach.

In comparison to the terms related to $D_1$,  some of the terms related to $D_2$ in \eqref{generic_overarching_BE_bdd}, such as 
\[
\sup_{x \geq 0}|x \bE[\bar{D}_{2n} f_x(W_b)]| \text{ and }\bE[e^{W_b}\bar{D}_{2n}^2]  ,
 \]
 are more obscure and have to be estimated on a case-by-case basis for specific examples of $T_{SN}$. However, in certain applications, the denominator remainder can  be perceivably manipulated into the  form
\begin{equation} \label{D2_specific_form}
D_{2n} = \max\Big(-1,  \quad \Pi_1+ \Pi_2\Big)
\end{equation}
lower censored  at $-1$, 
where $\Pi_1$ is defined as
\begin{equation}\label{Pi_1_def}
\Pi_1  \equiv \sum_{i =1}^n \Big( \xi_{b, i}^2 -\bE[\xi_{b,i}^2] \Big),
\end{equation}
and $\Pi_2 \equiv  \Pi_2 (X_1, \dots, X_n)$ is another data-dependent term. For instance, if a non-negative self-normalizer $1 + D_{2n}$ can be written as the intuitive form
\[
1 + D_{2n} = \sum_{i=1}^n \xi_i^2 + E
\]
for a data-dependent term  $E \equiv E(X_1, \dots, X_n)$ of perceivably smaller order, then $D_{2n}$ can be cast into the form \eqref{D2_specific_form} because $\sum_{i=1}^n(\bE[\xi_{b,i}^2]  + \bE[ (\xi_i^2 - 1)  I(|\xi_i|> 1)] )=\sum_{i=1}^n \bE[\xi_i^2]=  1$ and
one can take 
\[
\Pi_2 = E - \sum_{i=1}^n\bE[ (\xi_i^2 - 1)  I(|\xi_i|> 1)] + \sum_{i=1}^n (\xi_i^2 - 1)  I(|\xi_i|> 1).
\] 
We now present a more refined version of \thmref{main}  for Studentized nonlinear statistics whose $D_{2n}$ admits the form \eqref{D2_specific_form} under an absolute third-moment assumption on $\xi_i$; the proof  is included in \appref{main2_pf}. 
\begin{theorem}[Uniform B-E bound for  Studentized nonlinear  statistics with the denominator remainder \eqref{D2_specific_form} under a third moment assumption]\label{thm:main2}
Suppose all the conditions in \thmref{main} are met, and that $\bE[|\xi_i|^3] < \infty$ for all $1 \leq i \leq n$. In addition, 
 assume $D_{2n}$ takes the specific form  \eqref{D2_specific_form} with $\Pi_1$ defined in \eqref{Pi_1_def} and $\Pi_2 \equiv \Pi_2(X_1, \dots, X_n)$ being a function in the raw data $X_1, \dots, X_n$. For each $i = 1, \dots, n$, let
 \[
 \Pi_2^{(i)}  \equiv  \Pi_2^{(i)}(X_1, \dots, X_{i-1}, X_{i+1}, \dots, X_n)
 \]
 be any function in the raw data except $X_i$. Then
\begin{multline}\label{user_friendly_BE_bdd}
\sup_{x \in \mathbb{R}} \Big|P(T_{SN} \leq x) - \Phi(x)\Big| \leq  C \Bigg\{ \sum_{i=1}^n \bE[|\xi_i|^3]  + \|D_{1n}\|_2 +  \|\Pi_2\|_2 + \\
   \sum_{i=1}^n \|\xi_i\|_2 \|D_{1n}- D_{1n}^{(i)}\|_2 +
 \sum_{i=1}^n \|\xi_i\|_2\|\Pi_2 - \Pi_2^{(i)}\|_2
 \Bigg\},
\end{multline}
where $D_{1n}^{(i)} \equiv D_{1n}^{(i)}(X_1, \dots, X_{i-1}, X_{i+1}, \dots, X_n)$ is as in \thmref{main}.
\end{theorem}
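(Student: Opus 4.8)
\emph{Overall approach.} The plan is to deduce \eqref{user_friendly_BE_bdd} from \thmref{main} by bounding, one at a time, each of the terms on the right-hand side of \eqref{generic_overarching_BE_bdd} by the five quantities figuring in \eqref{user_friendly_BE_bdd}. Write $L := \sum_{i=1}^n \bE[|\xi_i|^3]$. Since the left-hand side of \eqref{user_friendly_BE_bdd} never exceeds $1$, I may assume throughout that $L + \|D_{1n}\|_2 + \|\Pi_2\|_2$ is below a fixed absolute constant, the bound being trivial otherwise. Three elementary facts will be used repeatedly: (a) $\xi_i^2 I(|\xi_i|>1) \le |\xi_i|^3$, $|\xi_{b,i}|^3 \le |\xi_i|^3$, $\xi_{b,i}^4 \le |\xi_i|^3$, $\bE[\xi_i^2]^{3/2} \le \bE[|\xi_i|^3]$ (Jensen), and $\bE[\xi_i^2] \le 1$; (b) $|\bE[\xi_{b,i}]| = |\bE[(\xi_i - \mathrm{sign}(\xi_i)) I(|\xi_i|>1)]| \le 2\bE[\xi_i^2]$, and $0 \le 1 - S \le \beta_2 \le L$, where $S := \sum_{i=1}^n \bE[\xi_{b,i}^2]$ and $1 - S = \sum_i \bE[(\xi_i^2-1)I(|\xi_i|>1)]$; (c) the uniform exponential-moment bounds $\bE[e^{cW_b}], \bE[e^{cW_b^{(i)}}] \le C$ for $c \in \{1/2, 1, 2\}$, which follow from (a)--(b), $|\xi_{b,i}| \le 1$ and $\sum_i \bE[\xi_{b,i}^2] \le 1$, and in particular give $\|1 + e^{W_b}\|_2 \le C$, etc.

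\emph{The routine terms.} Directly, $\beta_2, |\beta_3| \le L$; $P(|D_{1n}| > 1/2) \le 4\|D_{1n}\|_2^2 \le 4\|D_{1n}\|_2$; $\|\bar D_{1n}\|_2 \le \|D_{1n}\|_2$ by \eqref{D1_third_error_bdd}; and the $j = 1$ part of the double sum in \eqref{generic_overarching_BE_bdd} is $\le C \sum_i \|\xi_i\|_2 \|D_{1n} - D_{1n}^{(i)}\|_2$ by \eqref{D1_first_error_bdd}--\eqref{D1_second_error_bdd} together with $\bE[\xi_{b,i}^2] \le \bE[\xi_i^2] \le \|\xi_i\|_2$. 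Since $D_{2n} \ge -1$, the event $\{|D_{2n}| > 1/2\}$ is contained in $\{|\Pi_1 + \Pi_2| > 1/2\}$, so $P(|D_{2n}| > 1/2) \le 4\bE[(\Pi_1+\Pi_2)^2] \le 8\bE[\Pi_1^2] + 8\|\Pi_2\|_2^2$, and $\bE[\Pi_1^2] = \sum_i \operatorname{Var}(\xi_{b,i}^2) \le \sum_i \bE[\xi_{b,i}^4] \le L$; this term is thus $\le C(L + \|\Pi_2\|_2)$. For the $j = 2$ part of the double sum I would take the leave-one-out term $D_{2n}^{(i)} = \max(-1, \Pi_1^{(i)} + \Pi_2^{(i)})$ with $\Pi_1^{(i)} := \sum_{k \ne i}(\xi_{b,k}^2 - \bE[\xi_{b,k}^2])$; prepending $\max(-1,\cdot)$ does not alter the $[-1/2,1/2]$-censoring, so \propertyref{censoring_property}(i) gives $|\bar D_{2n} - \bar D_{2n}^{(i)}| \le |\xi_{b,i}^2 - \bE[\xi_{b,i}^2]| + |\Pi_2 - \Pi_2^{(i)}|$, and estimating the two pieces separately — the first via independence of $W_b^{(i)}$ from $X_i$, the second via Cauchy--Schwarz and (c) — yields, after (a), a total $\le C\big(L + \sum_i \|\xi_i\|_2 \|\Pi_2 - \Pi_2^{(i)}\|_2\big)$.

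\emph{The $W_b$-weighted $D_2$-terms.} For $\bE[(1+e^{W_b})\bar D_{2n}^2]$ I would use $|\bar D_{2n}| = \min(|\Pi_1+\Pi_2|, \tfrac12) \le \min(|\Pi_1|,\tfrac12) + \min(|\Pi_2|,\tfrac12)$, whence $\bar D_{2n}^2 \le 2\Pi_1^2 + |\Pi_2|$; then $\bE[(1+e^{W_b})|\Pi_2|] \le \|1+e^{W_b}\|_2\|\Pi_2\|_2 \le C\|\Pi_2\|_2$, while $\bE[(1+e^{W_b})\Pi_1^2] \le CL$ follows by expanding $\Pi_1^2$ into diagonal and off-diagonal parts: in the diagonal part one writes $e^{W_b} = e^{\xi_{b,i}} e^{W_b^{(i)}}$ and uses independence to get $\le C\sum_i \operatorname{Var}(\xi_{b,i}^2) \le CL$; in the off-diagonal part the ``$1$'' contributes nothing by independence and centering, and the $e^{W_b}$ contribution factorizes after peeling off $\xi_{b,i}, \xi_{b,j}$ and is controlled, via $|e^{\xi_{b,i}}-1| \le e|\xi_{b,i}|$, by $C\big(\sum_i (\bE[\xi_i^2]\bE[|\xi_i|^3])^{1/2}\big)^2 \le C\big(\sum_i\bE[\xi_i^2]\big)\big(\sum_i\bE[|\xi_i|^3]\big) \le CL$. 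Hence $\bE[(1+e^{W_b})\bar D_{2n}^2] \le C(L + \|\Pi_2\|_2)$. For $\sup_{x \ge 0}|x\bE[\bar D_{2n} f_x(W_b)]|$, write $\bar D_{2n} = \Pi_1 + \Pi_2 + R$ with $|R| \le |\Pi_1+\Pi_2|\, I(|\Pi_1+\Pi_2| > \tfrac12) \le 2(\Pi_1+\Pi_2)^2$; using the standard uniform bound $\sup_{x\ge0}\sup_w |xf_x(w)| \le C$ on the Stein solution, the $R$- and $\Pi_2$-contributions are $\le C\bE[|R|] + C\bE[|\Pi_2|] \le C(L + \|\Pi_2\|_2)$.

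\emph{The main obstacle.} What remains, and is the crux, is the $\Pi_1$-contribution $x\bE[\Pi_1 f_x(W_b)]$. It cannot be handled through $\bE[|\Pi_1|]$: one can exhibit configurations (a small collection of lightly weighted two-point variables carrying a vanishing fraction of the total variance, together with very many tiny Rademacher variables) for which $\bE[|\Pi_1|]$ exceeds any prescribed multiple of $L$, even though $\|\Pi_1\|_2^2 \le L$. Instead I would exploit the representation $\Pi_1 = (\sum_i \xi_{b,i}^2 - 1) + (1 - S)$ with $0 \le 1 - S \le \beta_2 \le L$ from (b), the deterministic term contributing $\le (1-S)\sup_w|xf_x(w)| \le CL$; and for $\bE[(\sum_i \xi_{b,i}^2 - 1)\, x f_x(W_b)]$ I would rewrite $xf_x(w) = f_x'(w) + (\Phi(x) - I(w \le x)) + (x-w)f_x(w)$ via the Stein equation and treat the three resulting sums using, respectively, $\|f_x'\|_\infty \le 1$, $|\Phi(x) - I(\cdot \le x)| \le 1$, and the standard bound $\sup_{x\ge0}\sup_w|(x-w)f_x(w)| \le C$, in each case first centering and passing to the leave-one-out quantity $W_b^{(i)}$ so that the summand becomes an \emph{increment}: the smooth parts of these increments are $O(|\xi_{b,i}|)$, while the ``jump'' parts (from the discontinuities of $f_x'$ and of $I(\cdot \le x)$ at $x$) are absorbed by conditioning on $X_i$ and invoking the randomized concentration inequality \lemref{modified_RCI_bdd} for $W_b^{(i)}$ — concretely $P(|W_b^{(i)} - x| \le t) \le C(t + L)$ — exactly as in the proof of \thmref{main}. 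Tallying with (a) gives $x\bE[\Pi_1 f_x(W_b)] \le CL$ uniformly in $x \ge 0$, hence $\sup_{x\ge0}|x\bE[\bar D_{2n} f_x(W_b)]| \le C(L + \|\Pi_2\|_2)$; collecting all the estimates and renaming constants yields \eqref{user_friendly_BE_bdd}. I expect this last step — controlling $x\bE[\Pi_1 f_x(W_b)]$ at the sharp order $L$ rather than $\sqrt L$ — to be the only genuinely delicate point, being the one place where the special structure \eqref{D2_specific_form}, the Stein equation, and a concentration bound for $W_b$ must all be brought to bear; everything else is moment bookkeeping built on \propertyref{censoring_property}, the norm bounds \eqref{D1_first_error_bdd}--\eqref{D1_third_error_bdd}, and boundedness of exponential moments of $W_b$.
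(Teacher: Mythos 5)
Your overall plan coincides with the paper's: specialise \thmref{main} and bound each term on the right-hand side of \eqref{generic_overarching_BE_bdd}, using \propertyref{censoring_property}, the norm bounds \eqref{D1_first_error_bdd}--\eqref{D1_third_error_bdd}, Bennett's exponential-moment bound on $W_b$, a diagonal/off-diagonal expansion of $\bE[e^{W_b}\Pi_1^2]$, and Cauchy--Schwarz for the $\Pi_2$ pieces. Your treatment of the routine terms, the $j=2$ part of the double sum and $\bE[(1+e^{W_b})\bar D_{2n}^2]$ matches the paper's appendix in substance, and your observation that $\bE[|\Pi_1|]$ need not be dominated by $\sum_i\bE[|\xi_i|^3]$ correctly isolates $x\bE[\Pi_1 f_x(W_b)]$ as the single delicate term. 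There you diverge. The paper keeps $xf_x$ intact, writes $\bE[\Pi_1 f_x(W_b)]=\sum_i\bE\bigl[(\xi_{b,i}^2-\bE\xi_{b,i}^2)\int_0^{\xi_{b,i}}f_x'(W_b^{(i)}+t)\,dt\bigr]$ by independence and the fundamental theorem of calculus, and invokes \lemref{expect_f'x} --- the nonuniform bounds on $f_x'$ from \eqref{fx'bdd} combined with Bennett's inequality yield $|\bE[f_x'(W_b^{(i)}+t)]|\le C(e^{-x}+e^{-x+t})$ for $x\ge1$ --- so the prefactor $x$ is absorbed via $xe^{-x}\le C$; the randomized concentration inequality plays no role in this estimate. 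You instead rewrite $xf_x(w)=f_x'(w)+(\Phi(x)-I(w\le x))+(x-w)f_x(w)$ to neutralise the $x$-factor at the outset, then carry out three separate leave-one-out increment estimates, reaching for the RCI for the jump parts. This is a genuinely different route: the paper's is tidier (one self-contained lemma), while yours is conceptually appealing in that a single Stein-equation manipulation disposes of the multiplicative $x$.

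As written, however, your route has a gap. The claim that the ``smooth parts of these increments are $O(|\xi_{b,i}|)$'' fails for the $f_x'$-piece: the absolutely continuous part of $f_x'(W_b)-f_x'(W_b^{(i)})$ is $\int_0^{\xi_{b,i}} g_x(W_b^{(i)}+u)\,du$ with $g_x=(wf_x)'$, and from \lemref{helping} one only has $g_x(w)\le g_x(x)\le x+2$ for $0\le w\le x$, so the pointwise Lipschitz constant of $f_x'$ grows with $x$. To make this term uniformly $O(|\xi_{b,i}|)$ one must first pass to $\bE[g_x(W_b^{(i)}+u)]$ and combine the exponential decay of $g_x$ on $\{w\le x-1\}$ with the Bennett bound on $\bE[e^{W_b^{(i)}}]$ to tame the region where $g_x\sim x$; that is exactly the argument inside \lemref{expect_f'x}, so your route secretly requires the paper's lemma (or an equivalent) after all --- the RCI you invoke handles only the jump at $w=x$, not the large-but-smooth region near it. A secondary slip: for the piece $C(w):=(x-w)f_x(w)$ the increment argument needs $\sup_{x\ge0}\sup_w|C'(w)|\le C$, not the sup bound on $|C(w)|$ that you cite; the derivative bound does hold (via $C'=-f_x+(x-\cdot)f_x'$ and the nonuniform $f_x'$-estimates), but it is a distinct fact that must be stated. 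Finally, the split $\Pi_1=(\sum_i\xi_{b,i}^2-1)+(1-S)$ is harmless but unnecessary, since $\Pi_1$ as defined in \eqref{Pi_1_def} is already centered.
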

The $\|\cdot\|_2$ terms in \eqref{user_friendly_BE_bdd} are now amenable to direct second moment calculations. 
Hence, if one can cast the denominator remainder into the form \eqref{D2_specific_form}, \thmref{main2} provides a user-friendly framework to establish B-E bounds for such instances of $T_{SN}$.

\section{Uniform Berry-Esseen bound for Studentized U-statistics} \label{sec:u_stat_sec}

We will apply \thmref{main2} to establish a uniform B-E bound of the rate $1/\sqrt{n}$ for Studentized U-statistics of any degree; all prior works in this vein  \citep{jing2000berry, callaert1981order, helmers1985berry, zhao1983rate, shao2016stein} only offer bounds for Studentized U-statistics  of degree 2.  We refer the reader  to \citet{shao2016stein} and \citet{jing2000berry} for other examples of applications, including L-statistics  and random sums and functions of nonlinear statistics.

Given independent and identically distributed random variables $X_1, \dots, X_n$ taking value in a measure space $(\cX, \Sigma_{\mathcal{X}})$, a U-statistic of degree $m \in \mathbb{N}_{\geq 1}$ takes the form 
\[
U_n = {n \choose m}^{-1} \sum_{1 \leq i_1< \dots < i_m \leq n} h(X_{i_1} , \dots, X_{i_m}),
\]
where $h : \mathcal{X}^m \longrightarrow \bR$ is a real-valued function symmetric in its $m$ arguments, also known as the kernel of $U_n$; throughout, we will assume that 
\begin{equation} \label{mean_zero_kernel}
\bE[h(X_1, \dots, X_m)] = 0,
\end{equation}
as well as
\begin{equation} \label{u_stat_deg_assumption}
 2m <n.
\end{equation}
 An important related function of $h(\cdot) $ is the \emph{canonical function}
\[
g(x) = \bE[h( X_1, \dots, X_{m-1}, x)] = \bE[h(X_1, \dots, X_m)|X_m = x],
\]
which determines the first-order asymptotic behavior of the U-statistic. We will only consider \emph{non-degenerate} U-statistics, which are U-statistics with the property that 
\begin{equation*}
\sigma_g^2 \equiv \text{var}[g(X_1)] > 0. 
\end{equation*}

It is well known that, when $\bE[h^2(X_1, \dots, X_m)] < \infty$, $\frac{\sqrt{n} U_n}{m \sigma_g}$ converges weakly to the standard normal distribution as $n$ tends to infinity \citep[Theorem 4.2.1]{korolyuk2013theory}; however, the limiting variance  $\sigma_g^2$ is typically unknown and has to be substituted with a data-driven estimate. By constructing
\[
q_i \equiv \frac{1}{{n-1 \choose m-1}} \sum_{\substack{1 \leq i_1 < \dots < i_{m-1} \leq n\\ i_l \neq i \text{ for } l = 1, \dots, m-1}} h(X_i, X_{i_1}, \dots, X_{i_{m-1}}), \qquad i = 1, \dots, n,
\]
as natural proxies for $g(X_1), \dots, g(X_n)$, the most common jackknife estimator for $\sigma_g^2$ is
\[
s_n^ 2\equiv  \frac{n-1}{(n-m)^2} \sum_{i=1}^n (q_i - U_n)^2
\]
 \citep{arvesen1969jackknifing}, which gives rise to the \emph{Studentized} U-statistic
  \[
 T_n \equiv \frac{\sqrt{n} U_n}{m s_n}. 
 \]
 Without any loss of generality, we will assume that 
 \begin{equation} \label{unit_var}
 \sigma_g^2 = 1,
 \end{equation} 
 as one can always replace $h(\cdot)$ and $g(\cdot)$ respectively by $h(\cdot)/\sigma_g$ and $g(\cdot)/\sigma_g$ without changing the definition of $T_n$.  Moreover, for $s^*_n$  defined as
\begin{equation*}
{s^*_n}^2 \equiv \frac{n-1}{(n-m)^2} \sum_{i=1}^n q_i^2,
\end{equation*}
 we will also consider the  statistic
\begin{equation} \label{Tnstar}
T_n^* \equiv \frac{\sqrt{n} U_n}{m s^*_n}.
\end{equation}
For any $x\in \bR$, the event-equivalence relationship
\begin{equation} \label{relationship}
 \{T_n > x\} = \left\{    T_n^* > \frac{x }{\left(1 + \frac{m^2 (n-1)x^2}{(n-m)^2}\right)^{1/2}}    \right\} 
\end{equation}
is known in the literature; see \citet{lai2011cramer, shao2016cramer} for instance.

We now state  a uniform Berry-Esseen bound for $T_n$ and $T_n^*$. In the sequel, for any $k \in \{1, \dots, n\}$ and $p \geq 1$, where no ambiguity arises, we may use $\bE[\ell]$ and $\|\ell\|_p$ as the respective shorthands for $\bE[\ell(X_1, \dots, X_k)]$ and $\|\ell(X_1, \dots, X_k)\|_p$, for a given function $\ell: \cX^k \longrightarrow \bR$ in $k$ arguments. For example, we may use $\bE[|h|^3]$ and $\|h\|_3$ to respectively denote the  third absolute moment and $3$-norm of  $h(X_1, \dots, X_m)$ with inserted data, and $\bE[g^2] = \|g\|_2^2 = \sigma_g^2 = 1$ under  \eqref{mean_zero_kernel} and \eqref{unit_var}.

\begin{theorem}[Berry-Esseen bound for Studentized U-statistics] \label{thm:BE_student_U} 
Let $X_1, \dots, X_n$ be 
independent and identically distributed random variables  taking value in a measure space $(\cX, \Sigma_{\mathcal{X}})$. 
Assume \eqref{mean_zero_kernel}-\eqref{unit_var} and 
\begin{equation} \label{3rd_kernel_moment_assumption}
\bE[|h|^3] < \infty,
\end{equation}
then the following Berry-Esseen bound holds:
\begin{equation} \label{BE_Tn}
\sup_{x \in \bR}|P(T_n \leq x) - \Phi(x)| \leq 
 C\frac{ \bE[|g|^3] + m( \bE[h^2] + \|g\|_3\|h\|_3)}{\sqrt{n}}
 \end{equation}
for a positive absolute constant $C$; \eqref{BE_Tn} also holds with $T_n$ replaced by $T_n^*$.

\end{theorem}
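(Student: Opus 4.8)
The plan is to deduce \thmref{BE_student_U} from \thmref{main2}. Since the asserted bound holds trivially once its right-hand side exceeds a small absolute constant $c_0$ (the left-hand side never exceeds $1$), I would first restrict to the regime in which $\bE[|g|^3]$, $m\,\bE[h^2]$ and $m\,\|g\|_3\|h\|_3$ are each at most $c_0\sqrt n$; combined with the elementary ANOVA identity $\bE[h^2]=\mathrm{Var}(h)=\sum_{c=1}^m\binom mc\sigma_c^2\ge m\sigma_g^2=m$ (where $\sigma_c^2$ is the variance of the degree-$c$ canonical component), this gives $m=O(n^{1/4})$ and ample slack in all subsequent bookkeeping. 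In this regime I would truncate the kernel at a suitably chosen level, replacing $h$ by the centred function $hI(|h|\le\tau)-\bE[hI(|h|\le\tau)]$ and comparing $T_n$ (resp.\ $T_n^*$) with the Studentized U-statistic built from it; choosing $\tau$ so as to balance the resulting distributional error against the target reduces matters to a bounded kernel, in particular one with finite fourth moments, which is what legitimizes the second-moment computations below. Finally, by the event-equivalence \eqref{relationship} and the standard argument of \citet{lai2011cramer, shao2016cramer} (the associated warping map has parameter $O(m^2/n)\le c_0/\sqrt n$), it suffices to prove the bound for $T_n^*$.

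Next I would bring $T_n^*$ into the shape demanded by \thmref{main2}: put $\xi_i\equiv g(X_i)/\sqrt n$, so that \eqref{assumptions} holds by \eqref{mean_zero_kernel}--\eqref{unit_var}, $W_n=n^{-1/2}\sum_i g(X_i)$, $D_{1n}=\sqrt n\,U_n/m-W_n$, and $1+D_{2n}={s_n^*}^2\ge 0$ (so $D_{2n}\ge -1$); writing $1+D_{2n}=\sum_i\xi_i^2+E$ with $E={s_n^*}^2-n^{-1}\sum_i g(X_i)^2$ and invoking the recipe following \eqref{D2_specific_form} casts $D_{2n}$ into the form \eqref{D2_specific_form} with $\Pi_2=E-\sum_i\bE[(\xi_i^2-1)I(|\xi_i|>1)]+\sum_i(\xi_i^2-1)I(|\xi_i|>1)$. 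The numerator-related terms of \eqref{user_friendly_BE_bdd} are then immediate: $\sum_i\bE[|\xi_i|^3]=\bE[|g|^3]/\sqrt n$ is exactly the first term of \eqref{BE_Tn}, and using the Hoeffding decomposition $U_n=\sum_{c=1}^m\binom mc U_n^{(c)}$ into orthogonal completely degenerate U-statistics, with $\mathrm{Var}(U_n^{(c)})=\binom nc^{-1}\sigma_c^2$, $\binom mc/\binom nc\le(m/n)^c\le(m/n)^2$ and $\sum_c\binom mc\sigma_c^2=\bE[h^2]$, I get $\|D_{1n}\|_2\le\sqrt{\bE[h^2]/n}$, and — taking $D_{1n}^{(i)}$ to be the same expression with every $c$-subset containing $i$ deleted — $\|D_{1n}-D_{1n}^{(i)}\|_2\le C\sqrt{\bE[h^2]}/n$, so $\sum_i\|\xi_i\|_2\|D_{1n}-D_{1n}^{(i)}\|_2\le C\sqrt{\bE[h^2]}/\sqrt n$. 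All three are within the claimed bound.

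The heart of the proof is the estimation of $\|\Pi_2\|_2$ and $\sum_i\|\xi_i\|_2\|\Pi_2-\Pi_2^{(i)}\|_2$, for a leave-one-out $\Pi_2^{(i)}$ mirroring $\Pi_2$ with the datum $X_i$ purged from every $q_j$. Writing $q_i=g(X_i)+r_i$ with $\bE[r_i\mid X_i]=0$ and bounding $\bE[r_i^2]=\bE[\mathrm{Var}(q_i\mid X_i)]\le\tfrac{m-1}{n-1}\bE[h^2]$ by a conditional Hoeffding variance bound, one has
\[
E=\theta_n\sum_i g(X_i)^2+\tfrac{2(n-1)}{(n-m)^2}\sum_i g(X_i)r_i+\tfrac{n-1}{(n-m)^2}\sum_i r_i^2,\qquad 0<\theta_n\le\tfrac{8m}{n^2}.
\]
The first and third summands are routinely controlled once the kernel is bounded (via $\theta_n\le 8m/n^2$ and $\sum_i\bE[r_i^2]\le\tfrac{n(m-1)}{n-1}\bE[h^2]$), and the censoring-correction term $\sum_i(\xi_i^2-1)I(|\xi_i|>1)$ contributes a mean at most $\sum_i\bE[|\xi_i|^3]=\bE[|g|^3]/\sqrt n$ and a fluctuation negligible after truncation. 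The crucial cross term is treated through the exact identity $\sum_i g(X_i)r_i=n\,U_n[\Psi]$ with the mean-zero degree-$m$ kernel $\Psi(X_S)=\tfrac1m\big(\sum_{i\in S}g(X_i)\big)h(X_S)-\tfrac1m\sum_{i\in S}g(X_i)^2$; the Hoeffding variance bound then yields $\|\sum_i g(X_i)r_i\|_2\le\sqrt{mn\,\bE[\Psi^2]}$, and estimating $\bE[\Psi^2]$ routes the mixed moments of $g$ and $h$ through Hölder's inequality with three exponents equal to $3$, producing the $\|g\|_3\|h\|_3$ contribution of \eqref{BE_Tn}, while the diagonal pieces of $\bE[\Psi^2]$ produce the $\bE[h^2]$ contribution. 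For $\Pi_2-\Pi_2^{(i)}$ one uses that $X_i$ enters $q_j$ ($j\ne i$) only through the $(m-1)$-subsets containing it, and enters $q_i$ entirely, so $\Pi_2-\Pi_2^{(i)}$ is a sum of ``influence'' terms whose $L^2$-norms, paired with the $\xi_i$'s, again generate $m\,\bE[h^2]/\sqrt n$ and $m\,\|g\|_3\|h\|_3/\sqrt n$ — the extra factor $m$ coming from the $m$ slots at which $X_i$ can sit in an $m$-subset. Substituting all of this into \eqref{user_friendly_BE_bdd} and restoring the truncation error yields \eqref{BE_Tn} for $T_n^*$, hence for $T_n$.

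The step I expect to be the main obstacle is precisely this last, denominator-related one. Unlike the classical degree-$2$ case, the jackknife proxies $q_i$ involve the whole kernel $h$ rather than only its canonical function $g$, so their raw second moments would demand fourth moments of $h$; one must therefore truncate and then honestly weigh the truncation error against the target — all the while keeping every estimate linear in $m$, which forces systematic use of the orthogonality of the Hoeffding components and of the symmetry of the U-statistic in place of crude triangle inequalities, and forces every $g$--$h$ cross moment to be channelled through the Hölder product $\|g\|_3\|h\|_3$ that appears in \eqref{BE_Tn}.
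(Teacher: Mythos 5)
Your overall strategy---reduce $T_n$ to $T_n^*$ via \eqref{relationship}, feed $\xi_i=g(X_i)/\sqrt n$, $D_{1n}=\sqrt n U_n/m-W_n$ and a suitably massaged $D_{2n}$ into \thmref{main2}, and dispatch $\|D_{1n}\|_2$ and $\|D_{1n}-D_{1n}^{(i)}\|_2$ by Hoeffding orthogonality---matches the paper's skeleton, and your numerator bookkeeping is essentially \lemref{Dbdds}. Where your plan has a genuine gap is in how you handle the denominator remainder. You fold the entire $E={s_n^*}^2-n^{-1}\sum_i g^2(X_i)$ into $\Pi_2$ and must then control $\|\Pi_2\|_2$, but the summand $\frac{n-1}{(n-m)^2}\sum_i r_i^2$ (which is, up to constants, the paper's $\Lambda_n^2$ together with the $W_n^2$ corrections constituting $\delta_{1n}$) has an $L^2$ fluctuation requiring $\bE[r_i^4]$ and hence $\bE[h^4]$. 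You propose to fix this by pre-truncating $h$ at a level $\tau$, but the trade-off does not close: for the distributional comparison between $T_n^*$ and the truncated version to cost $O(1/\sqrt n)$ you need roughly $\binom nm\,\tau^{-3}\bE[|h|^3]\lesssim n^{-1/2}$, i.e.\ $\tau^3\gtrsim\sqrt n\binom nm\bE[|h|^3]$, whereupon the fourth moment of the truncated kernel, which can reach $\tau\,\bE[|h|^3]$, is not small relative to $n$ for general $m$. The paper avoids the fourth moment entirely: by isolating the $\Lambda_n^2$ and $W_n^2$ pieces into $\delta_{1n}$, checking $\delta_{1n}\ge 0$ via Cauchy--Schwarz \eqref{WPsi_bdd}, and exploiting the monotonicity of the denominator in $\delta_{1n}$, it sandwiches $\tilde T_n^*$ between the statistics with $\delta_{1n}$ replaced by $0$ and by the deterministic $n^{-1/2}$, paying only $P(\delta_{1n}>n^{-1/2})\le\sqrt n\,\bE[|\delta_{1n}|]$---an $L^1$ price that needs only $\bE[h^2]$ (see the display following \eqref{last_bdd}). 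This nonnegativity--sandwich idea, not a truncation of the kernel, is the missing ingredient.

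A second, related gap sits in your cross-term estimate. You bound $\|\sum_i g(X_i)r_i\|_2$ via $\bE[\Psi^2]$ with H\"older exponents equal to $3$, but the diagonal piece $\sum_{i\in S}\bE[h^2 g^2(X_i)]$ cannot be reduced to $\|g\|_3\|h\|_3$ or $\bE[h^2]$ under \eqref{3rd_kernel_moment_assumption} alone; any H\"older split leaves a $\|g\|_6$ or $\|h\|_6$ factor, and making $g$ bounded by truncating $h$ at $\tau$ only replaces it by a $\tau^2$ factor that grows. The paper's $\Pi_2$ is built from $\delta_{2n,b}$, which uses the censored $\xi_{b,i}$ (so $|\xi_{b,i}|\le 1$) in place of $g(X_i)$; \lemref{ustat_results}$(iii)$--$(iv)$ then exploit this boundedness precisely when a $g^2(X_1)$ factor appears, e.g.\ $|\bE[\xi_{b,1}g^2(X_1)]|\le\bE[|\xi_1|g^2(X_1)]=n^{-1/2}\bE[|g|^3]$, which is the right rate under a third-moment assumption. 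Without carrying the censoring into the cross term your plan cannot be pushed through at the stated moment level.
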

To the best of our knowledge, this  bound is the most optimal  to date in the following sense: improving upon the preceding works of \citep{callaert1981order, helmers1985berry, zhao1983rate}, for Studentized U-statistics of degree 2, under the same assumptions as \thmref{BE_student_U}, \citet[Theorem 3.1]{jing2000berry} states a bound of the form
\[
\sup_{x \in \bR} |P(T_n \leq x) - \Phi(x)| \leq C \frac{ \bE[|h(X_1, X_2)|^3]}{\sqrt{n}} 
\]
for an absolute constant $C > 0$. In comparison,  \eqref{BE_Tn} is more optimal for $m =2$ because all  the moment quantities 
\[
 \bE[|g(X_1)|^3] , \quad \bE[|h(X_1, X_2)|^2] \text{ and } \|g(X_1)\|_3\|h(X_1, X_2)\|_3
\] 
from  \eqref{BE_Tn} are all no larger than $\bE[|h(X_1, X_2)|^3]$, given  the standard moment properties for U-statistics; see \eqref{Jensen} below.

In addition, we remark that  the original B-E bound for Studentized U-statistics of degree $2$ in \citet[Theorem 4.2 \& Remark 4.1]{shao2016stein} may have been falsely stated. Given \eqref{mean_zero_kernel}-\eqref{unit_var},  for an absolute constant $C >0$, they stated a seemingly better bound (than \eqref{BE_Tn}) of the form
\[
\sup_{x \in \bR} |P(T_n \leq x) - \Phi(x)| \leq C \frac{\|h(X_1, X_2)\|_2+ \bE[|g(X_1)|^3]}{\sqrt{n}},
\]
under the weaker assumption (than \eqref{3rd_kernel_moment_assumption}) that $\|g(X_1)\|_3\vee\|h(X_1, X_2)\|_2 < \infty$\footnote{Actually, the  bound claimed in \citet[Remark 4.1]{shao2016stein} is $n^{-1/2}(\|h(X_1, X_2)\|_2 + \|g(X_1)\|_3^3)$, but the omission of the exponent $2$ for $\|h(X_1, X_2)\|_2$ is itself a typo in that paper.}. Unfortunately, the latter  assumption is inadequate under the current approach based on  Stein's method. The main issue is that  \citet{shao2016stein} has ignored  crucial calculations that require forming estimates of the rate $O(1/n)$ for an expectation of the type
\[
\bE[\xi_{b, 1} \xi_{b, 2} \bar{h}_2 (X_{i_1},  X_{i_2})  \bar{h}_2(X_{j_1}, X_{j_2})],
\]
where  $1 \leq i_1 <  i_2\leq n$ and $1 \leq j_1 <  j_2\leq n$ are  two pairs of sample indices, and $\bar{h}_2(\cdot)$ is the second-order canonical function in the Hoeffding's decomposition of $U_n$ for $m = 2$; see \eqref{h_k_def}.
To do so,  we believe one cannot do away with a third moment assumption on the kernel as in \eqref{3rd_kernel_moment_assumption}, where the anxious reader can skip ahead to \lemref{ustat_results}$(iii)$ and $(iv)$ for a preview of our estimates. 
Our proof of \thmref{BE_student_U} rectifies such errors; moreover, it  generalizes to a kernel of any degree $m$, for which the enumerative calculations needed are considerably more involved.

We first set the scene for establishing \thmref{BE_student_U}, by letting
\begin{equation} \label{u_stat_xi}
\xi_i = \frac{g(X_i)}{\sqrt{n}}
\end{equation}
and defining
\begin{equation} \label{h_k_def}
\bar{h}_k(x_1 \dots, x_k) = h_k(x_1 \dots, x_k) - \sum_{i=1}^k g(x_i) \text{ for } k = 1, \dots, m,
\end{equation}
where 
\[
h_k(x_1, \dots, x_k) = \bE[h(X_1, \dots, X_m) |X_1 = x_1, \dots, X_k = x_k ];
\] 
in particular, $g(x) = h_1(x)$ and $h(x_1, \dots, x_m) = h_m(x_1, \dots, x_m)$. An important property of the functions $h_k$ is that 
\begin{equation} \label{Jensen}
\bE\big[ |h_k|^p\big] \leq \bE\big[ |h_{k'}|^p\big] \text{ for any } p \geq 1 \text{ and } k \leq k',
\end{equation}
which is a  consequence of Jensen's inequality: 
 \begin{align*} \label{Jensen}
\bE\Big[ |h_{k}(X_1, \dots, X_k )|^p\Big] 
&= \bE\Big[ |\bE[h(X_1, \dots, X_m) |X_1, \dots, X_k]|^p\Big]  \\
&= \bE\Big[ \Big|\bE[h_{k'}(X_1, \dots, X_{k'})| X_1, \dots, X_k]\Big|^p\Big] \\
&\leq  \bE\Big[ \bE\Big[|h_{k'}(X_1, \dots, X_{k'})|^p \mid X_1, \dots, X_k\Big]\Big]  =  \bE\Big[ |h_{k'}(X_1, \dots, X_{k'})|^p\Big].
\end{align*}
One can then  write the part of \eqref{Tnstar} without the Studentizer $s^*_n$ as
\begin{equation} \label{W_plus_D1}
\frac{\sqrt{n} U_n}{m} = W_n + D_{1 n},
\end{equation}
where $W_n \equiv \sum_{i=1}^n \xi_i$ and 
\begin{equation} \label{D1_ustat}
D_{1n}
 \equiv {n -1 \choose m -1}^{-1} \sum_{1 \leq i_1 < \dots < i_m \leq n} \frac{\bar{h}_m(X_{i_1}, X_{i_2}, \dots, X_{i_m}) }{\sqrt{n}},
\end{equation}
 are considered as the numerator components under the framework of \eqref{sn_stat}. 
To handle $s^*_n$,  we shall first define 
\[
\Psi_{n, i} =  \sum_{\substack{1 \leq i_1 < \dots < i_{m-1} \leq n\\ i_l \neq i \text{ for } l = 1, \dots, m-1}} \frac{\bar{h}_m(X_i, X_{i_1}, \dots, X_{i_{m-1}}) }{\sqrt{n}} 
\]
and write
\begin{align*}
q_i &
=  \frac{1}{{n-1 \choose m-1}} \sum_{\substack{1 \leq i_1 < \dots < i_{m-1} \leq n\\ i_l \neq i \text{ for } l = 1, \dots, m-1}} \left[g(X_i) + \sum_{l=1}^{m-1} g(X_{i_l})+ \bar{h}_m(X_i, X_{i_1}, \dots, X_{i_{m-1}}) \right]\\
&
= \sqrt{n}\left[\left(\frac{n-m}{n-1}\right)\xi_i+ \frac{m-1}{n-1}W_n\right]+ \frac{\sqrt{n}}{{n-1 \choose m-1}} \Psi_{n, i}
\end{align*}
for each $i$. By further letting
\[
 \Lambda_n^2 = \sum_{i=1}^n \Psi_{n, i}^2 \quad \text{ and } \quad V_n^2 = \sum_{i=1}
^n \xi_i^2,
\]
the sum $\sum_{i=1}^n q_i^2$ can be consequently written as 
 \begin{multline*}
 \sum_{i=1}^n q_i^2 =  n \left(\frac{n-m}{n-1}\right)^2 V_n^2  + \left[n^2 \left(\frac{m-1}{n-1}\right)^2 + \frac{2n(n-m)(m-1)}{(n-1)^2}\right]W_n^2 \\
+ \frac{n}{{n-1 \choose m-1}^2} \Lambda_n^2 + 2 n \left(\frac{n-m}{n-1}\right) {n-1 \choose m-1}^{-1} \sum_{i=1}^n \xi_i \Psi_{n,i} + \frac{2 n (m-1)}{(n-1){n-1 \choose m-1}} \sum_{i=1}^nW_n \Psi_{n, i},
\end{multline*}
which implies one can re-express ${s_n^*}^2$ as
\begin{equation} \label{s_star_re_expr}
{s_n^*}^2 = d_n^2 (V_n^2 + \delta_{1n} + \delta_{2n}) \quad \text{ for }\quad d_n^2 \equiv \frac{n}{n-1}
\end{equation}
for
\begin{equation}\label{delta_1n}
\delta_{1n} = \left[ \frac{ n(m-1)^2}{(n-m)^2} + \frac{2(m-1)}{(n-m)}\right] W_n^2 + \frac{(n-1)^2}{ {n-1 \choose m-1}^2 (n-m)^2} \Lambda_n^2 + \frac{2(n-1)(m-1) }{(n-m)^2 {n-1 \choose m-1}} \sum_{i = 1}^n  W_n \Psi_{n, i}
\end{equation}
and
\[
\delta_{2n} \equiv \frac{2 (n-1) }{(n-m)} {n-1 \choose m-1}^{-1} \sum_{i=1}^n \xi_i \Psi_{n,i}.
\]
We now present  the proof of \thmref{BE_student_U}.

 \begin{proof}[Proof of \thmref{BE_student_U}]
It suffices to consider $x \geq 0$  since otherwise one can replace $h(\cdot)$ by $- h(\cdot)$. Defining 
\[
b_n = \frac{ m^2(n-1)}{ (n-m)^2} \text{ and } a_{n, x} = a_{n}(x) = \frac{1}{ (1 + b_n x^2)^{1/2}},
\]
we first simplify the problem using the bound
\begin{equation} \label{bridging}
 | \barPhi(x a_n(x)) -  \barPhi(x)| \leq \min \left(  \frac{ m^2(n-1)x^3}{\sqrt{2 \pi} (n-m)^2}  ,  \frac{2}{\max(2 , \sqrt{2 \pi} x a_{n, x})}  \right)\exp\left( \frac{-x^2 a_{n, x}^2}{2}\right),
\end{equation}  
which will be shown by a ``bridging argument" borrowed from \citet{jing2000berry} at the end of this section. Then,  
by the triangular inequality, \eqref{relationship} and \eqref{bridging},
\begin{align}
&|P(T_n \leq x) - \Phi(x)|\notag\\
 &= |P(T_n > x) - \barPhi(x)| \notag\\
&\leq  |P(  T_n^* > x a_n(x) ) - \barPhi(x a_n(x))| + | \barPhi(x a_n(x)) -  \barPhi(x)| \notag\\
&\leq    |P(  T_n^* > x a_n(x) ) - \barPhi(x a_n(x))| + \min \left(  \frac{ m^2(n-1)x^3}{\sqrt{2 \pi} (n-m)^2}  ,  \frac{2}{\max(2 , \sqrt{2 \pi} x a_{n, x})}  \right)\exp\left( \frac{-x^2 a_{n, x}^2}{2}\right) \notag\\
&\leq  |P(  T_n^* > x a_n(x) ) - \barPhi(x a_n(x))| +C\frac{m^2}{\sqrt{n}}, \label{bridging_to_Tn_star_BE}
\end{align}
where the last inequality in \eqref{bridging_to_Tn_star_BE} holds as follows: For $0 \leq x \leq n^{1/6}$, the term 
\[
\frac{ m^2(n-1)x^3}{\sqrt{2 \pi} (n-m)^2} \leq \frac{m^2(n-1) \sqrt{n}}{\sqrt{2 \pi} (n-m)^2} \leq  \frac{m^2(n-1) \sqrt{n}}{\sqrt{2 \pi} (n-n/2)^2} \leq \frac{2 \sqrt{2} m^2}{\sqrt{\pi n}}.
\]  For $n^{1/6} <  x < \infty$,
since $x a_n(x)$ is strictly increasing in $x \in [0, \infty)$, we have that 
\begin{align*}
\exp(-x^2 a_{n, x}^2 /2) &\leq\exp(-n^{1/3} (1 + b_n n^{1/3})^{-1} /2)\leq \exp\bigg(-\frac{n^{1/3}}{2} \left(1 +  \frac{4 m^2 (n-1)n^{1/3}}{n^2}\right)^{-1}\bigg) \\
&
\underbrace{\leq}_{\text{by  \eqref{u_stat_deg_assumption}} } \exp\left(- \frac{n^{1/3}}{2 (1 +  (2m)^{4/3})} \right) \leq \exp\big(- \frac{n^{1/3}}{8m^{4/3}}\big) \leq \frac{Cm^2}{\sqrt{n}}.
\end{align*}
Since
\begin{equation}\label{extract_extra_m}
m = m\bE[g^2] \leq \bE[h^2]
\end{equation} 
by \eqref{unit_var} and a classical U-statistic moment bound \citep[Lemma 1.1.4]{korolyuk2013theory}, in light of \eqref{bridging_to_Tn_star_BE}, to prove \eqref{BE_Tn} it suffices to show 
\begin{equation} \label{Tn_star_BE}
|P(T_n^* > x) - \barPhi(x)|  \leq 
 C\frac{\bE[|g|^3]+ m ( \bE[h^2] + \|g\|_3\|h\|_3)}{\sqrt{n}},
\end{equation}
as we have claimed to also hold in \thmref{BE_student_U}.

 Note that since  $2 |W_n   \sum_{i=1}^n \Psi_{n, i}|  \leq 2 \sqrt{n} |W_n| \Lambda_n$ by Cauchy's inequality, 
\begin{multline}\label{WPsi_bdd}
 \frac{2(n-1)(m-1) }{(n-m)^2 {n-1 \choose m-1}} \Bigg|\sum_{i = 1}^n  W_n \Psi_{n, i}\Bigg| \leq 2 \Bigg\{ \frac{\sqrt{n}(m-1)}{n-m}|W_n| \Bigg\} \Bigg\{\frac{(n-1)}{{n-1 \choose m-1}(n-m) } \Lambda_n \Bigg\}\\
\leq \frac{n (m-1)^2}{(n-m)^2} W_n^2 + \frac{(n-1)^2}{{n-1 \choose m-1}^2 (n-m)^2 } \Lambda_n^2,
\end{multline}
 and hence we can deduce from \eqref{delta_1n} that 
\begin{equation} \label{delta1ngeq0}
\delta_{1n} \geq 0.
\end{equation}
With \eqref{W_plus_D1} and \eqref{s_star_re_expr}, one can then rewrite $T_n^*$ as
\begin{align*}
T_n^* 
&= \frac{W_n +  D_{1n} }
{ d_n \sqrt{V_n^2  + \delta_{1n} + \delta_{2n}}}.\\
\end{align*}
Now, consider the related statistic
\[
\tilde{T}^*_n = \frac{W_n + D_{1n}}
{ \{\max(0, V_{n,b}^2 +  \delta_{1n, b}  + \delta_{2n, b})\}^{1/2}},
\]
with  suitably censored components in the denominator defined as
\begin{multline*}
 V^2_{n, b} = \sum_{i =1}^n \xi_{b, i}^2, \quad \delta_{1n, b} =  \min(\delta_{1n}, n^{-1/2}) 
 \quad \text{ and } \quad 
 \delta_{2n, b} =  \frac{2 (n-1)}{(n-m)} {n-1 \choose m-1}^{-1} \sum_{i=1}^n \xi_{b, i} \Psi_{n,i}, 
\end{multline*}
 Note that $T_n^* $ and $\tilde{T}^*_n $ can be related by the inclusions of events
\[
\{\tilde{T}^*_n  \leq d_n x\}\backslash \cE \subset \{T^{*}_n \leq x\} \subset  \{\tilde{T}^*_n  \leq d_n x\}\cup \cE,
\]
where  $\cE \equiv \{\max_{1\leq i \leq n} |\xi_i| > 1\} \cup\{|\delta_{1n}| > n^{-1/2}\}$. The latter fact 
 implies
\begin{align}
|P(T^{*}_n \leq x) - \Phi(x)| 
&\leq |P(\tilde{T}^*_n  \leq d_n x) - \Phi(x)| + P(\cE) \notag\\
&\leq |P(\tilde{T}^*_n  \leq d_n x) - \Phi(x)| + \sum_{i=1}^n P(|\xi_i| > 1) + P(|\delta_{1n}| > n^{-1/2})\notag\\
&\leq  |P(\tilde{T}^*_n  \leq d_n x) - \Phi(x)| + \beta_2 + \sqrt{n}\bE[ |\delta_{1n}|] \notag \\
& \leq  |P(\tilde{T}^*_n  \leq d_n x) - \Phi(x)| + 
 \frac{ \bE[|g|^3]}{\sqrt{n}}+C\frac{ m \bE [ h^2]}{\sqrt{n}} ,  \label{last_bdd}
\end{align} 
with \eqref{last_bdd} coming from $\beta_2 \leq  \sum_{i=1}^n \bE[|\xi_i|^3] =  \bE[|g|^3]/\sqrt{n}$, as well as combining \eqref{WPsi_bdd} with \eqref{delta_1n} as:
\begin{align*} \label{cauchy_bdd}
&\bE [|\delta_{1n} |]\\
&\leq  2 \left[\frac{ m (m-1) (n -1)}{(n-m)^2} \right]\bE[W_n^2]   +  \frac{2(n-1)^2}{ {n-1 \choose m-1}^ 2 (n-m)^2 }\bE[\Lambda_n^2 ]\\
 & =  2 \left[\frac{ m (m-1) (n -1)}{(n-m)^2} \right] +  
 \frac{2(n-1)^2}{ {n-1 \choose m-1}^ 2 (n-m)^2 }
 \bE \left[ \left(\sum_{2 \leq i_1 < \dots < i_{m-1} \leq n}\bar{h}_m(X_1, X_{i_1}, \dots, X_{i_{m-1}}) \right)^2\right] \\
 &\leq    \Bigg(\frac{8m}{n} + 
  \frac{4(n-1)^2(m-1)^2}{ (n-m)^2(n-m+1)m } \Bigg)
 \bE [ h^2],
\end{align*}
where the last inequality follows from \eqref{extract_extra_m} and $2m < n$, as well as a standard U-statistic bound in \lemref{ustat_results}$(ii)$. 

In light of \eqref{last_bdd}, to prove \eqref{Tn_star_BE}, it suffices to bound  $|P(\tilde{T}^*_n  \leq d_n x) - \Phi(x)|$. To this end, we first define 
\[
\check{T}^{**}_n = \frac{W_n + D_{1n}}
{ \{\max(0, V_{n,b}^2  + \delta_{2n, b})\}^{1/2}}
\]
and 
\[
\hat{T}^{**}_n = \frac{W_n + D_{1n}}
{ \{\max(0, V_{n,b}^2  + n^{-1/2} + \delta_{2n, b})\}^{1/2}},
\]
which, by \eqref{delta1ngeq0}, have the property
\begin{equation}\label{sandwich}
P(\check{T}^{**}_n \leq d_n x)   \leq  P(\tilde{T}^*_n  \leq d_n x)   \leq P(\hat{T}^{**}_n \leq d_n x )
\end{equation}
Hence, to establish a bound for $|P(\tilde{T}^*_n  \leq d_n x) - \Phi(x)|$, our strategy is to prove the same bound for $|P(\check{T}^{**}_n \leq d_n x) - \Phi(d_n x)|$ and 
$|P(\hat{T}^{**}_n \leq d_n x) - \Phi(d_n x)|$, as well as using the bound
\begin{equation} \label{Phi_dn_x_minus_Phi_x}
|\Phi(d_n x) - \Phi(x)| =  \phi(x')(d_n x - x) \leq C (d_n- 1) \leq C n^{-1/2},
\end{equation}
coming from the mean-value theorem, where $x' \in (x, d_nx)$ and $x\phi(x')$ is a bounded function in $x \in [0, \infty)$. 
To simplify notation we will put $\check{T}^{**}_n$ and $\hat{T}^{**}_n$ under one umbrella and define their common placeholder 
\begin{equation} \label{placeholder}
T_n^{**} = \frac{W_n + D_{1n}}
{(1 + D_{2n})^{1/2}},
\end{equation}
where 
\begin{equation} \label{D2_ustat}
D_{2n} \equiv \max(-1 , V_{n,b}^2 - 1+  (n^{-1/2}|0)+ \delta_{2n, b})
\end{equation}
and for $a, b \in \bR$, $(a|b)$ represents either $a$ or $b$; so $T_n^{**}$ is either $\hat{T}_n^{**}$ or $\check{T}_n^{**}$.

Now, we cast \eqref{D2_ustat}  into the form  \eqref{D2_specific_form} by taking $\Pi_1 = V_{n, b}^2 - \sum_{i=1}^n \bE[\xi_{b,i}^2]$ and 
\begin{equation} \label{Pi2_for_ustat}
\Pi_2 = \delta_{2n, b} + (n^{-1/2}|0)  - \sum_{i=1}^n \bE[ (\xi_i^2 - 1)I(|\xi_i|> 1) ]
\end{equation}
In order to apply \thmref{main2} to bound $|P(T^{**}_n \leq d_n x) - \Phi(d_n x)|$, we will  let $D_{1n}^{(i)}$ and $\Pi_2^{(i)}$ respectively to be the ``leave-one-out" versions of $D_{1n}$ and $\Pi_2$  in \eqref{D1_ustat} and  \eqref{Pi2_for_ustat} that omit all the terms involving $X_i$, i.e,  
\begin{equation} \label{D1ni_def}
D_{1n}^{(i)} \equiv {n -1 \choose m -1}^{-1} \sum_{\substack{1 \leq i_1 < \dots < i_m \leq n\\ i_l \neq i \text{ for } l = 1, \dots, m}} \frac{\bar{h}_m(X_{i_1}, X_{i_2}, \dots, X_{i_m}) }{\sqrt{n}}
\end{equation}
and 
\begin{equation} \label{Pi2_for_ustati_def}
\Pi_2^{(i)} \equiv  \delta_{2n, b}^{(i)} + (n^{-1/2}|0) - \sum_{\substack{j =1\\j \neq i}}^n\bE[ (\xi_j^2 - 1)I(|\xi_j|> 1) ] 
\end{equation}
for
\[
\delta_{2n, b}^{(i)} \equiv \frac{2 (n-1) }{\sqrt{n}(n-m)} {n-1 \choose m-1}^{-1} \sum_{\substack{j =1\\ j \neq i}}^n \xi_{b, j} \sum_{\substack{1 \leq i_1 < \dots < i_{m-1} \leq n \\
i_l \neq j, i \text{ for } l  = 1, \dots, m-1}} \bar{h}_m (X_j, X_{i_1}, \dots, X_{i_{m-1}}).
\]
We also need the following bounds:
\begin{lemma}[Moment bounds related to $D_{1n}$ in \eqref{D1_ustat}] \label{lem:Dbdds}  Let $D_{1n}$ and $D_{1n}^{(i)}$ be defined as in \eqref{D1_ustat} and   \eqref{D1ni_def}. Under the assumptions of \thmref{BE_student_U}, the following  hold: 

\begin{equation} \label{bar_D_1n_bdd}
\|D_{1n}\|_2 \leq \frac{(m-1) \|h\|_2}{\sqrt{m(n-m+1)}},
\end{equation}
and
\begin{equation}\label{D1n_minus_D1ni_bdd}
\|D_{1n} - D_{1n}^{(i)} \|_2 \leq \frac{\sqrt{2} (m-1) \|h\|_2}{\sqrt{nm(n-m+1)}}
\end{equation}

\end{lemma}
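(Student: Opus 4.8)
The plan is to prove both bounds by direct second-moment computations, exploiting the Hoeffding-type structure of $\bar h_m$ and the fact that it is ``completely degenerate'' in an appropriate sense: for $k \le m$, conditioning $\bar h_m(X_{i_1},\dots,X_{i_m})$ on any proper subset of its arguments that contains at least one index gives something of strictly smaller order, because $\bE[\bar h_m(X_1,\dots,X_m)\mid X_1,\dots,X_j] = h_j(X_1,\dots,X_j) - \sum_{l=1}^j g(X_l)$ is \emph{not} identically zero for $j<m$ but is orthogonal to $g$ and to lower-order pieces. The cleanest route is to observe the orthogonality relation: for two increasing index tuples $I = (i_1<\dots<i_m)$ and $J=(j_1<\dots<j_m)$,
\[
\bE\big[\bar h_m(X_I)\,\bar h_m(X_J)\big]
\]
depends only on $|I\cap J|$, and in fact equals $0$ unless $I=J$ (when it is $\|\bar h_m\|_2^2$), because $\bar h_m$ has mean zero when integrated against any argument it shares — this is the key degeneracy lemma one sets up before the proof (or one invokes the classical Hoeffding orthogonality, e.g.\ \citet[Lemma 1.1.4 or Ch.~1]{korolyuk2013theory}, which is already cited elsewhere in the paper).

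For \eqref{bar_D_1n_bdd}: write $D_{1n} = \binom{n-1}{m-1}^{-1} n^{-1/2} \sum_{I} \bar h_m(X_I)$ where the sum runs over the $\binom{n}{m}$ increasing $m$-tuples. Squaring and taking expectations, all cross terms with $I\ne J$ vanish by the orthogonality just described, leaving $\bE[D_{1n}^2] = \binom{n-1}{m-1}^{-2} n^{-1} \binom{n}{m} \|\bar h_m\|_2^2$. Using $\binom{n}{m}/\binom{n-1}{m-1}^2 = \frac{n}{m}\binom{n-1}{m-1}^{-1} \le \frac{n}{m}\cdot\frac{(m-1)! (n-m)!}{(n-1)!}$, a short manipulation gives $\binom{n}{m}/\binom{n-1}{m-1}^2 = \frac{n(m-1)!(n-m)!}{m\,(n-1)!}\cdot\frac{1}{(m-1)!}$... more simply, $\binom{n}{m} = \frac{n}{m}\binom{n-1}{m-1}$, so $\bE[D_{1n}^2] = \frac{1}{m\binom{n-1}{m-1}}\|\bar h_m\|_2^2$; then one bounds $\binom{n-1}{m-1}\ge \frac{(n-m+1)^{m-1}}{(m-1)!}$ is too crude, so instead use $\binom{n-1}{m-1} \ge \frac{n-m+1}{m-1}\cdot(\text{something})$ — actually the target $\frac{(m-1)^2}{m(n-m+1)}\|h\|_2^2$ follows from $\binom{n-1}{m-1}\ge \frac{m(n-m+1)}{(m-1)^2}$ for $2m<n$, which is a routine binomial inequality, combined with $\|\bar h_m\|_2 \le \|h\|_2$ (which holds because $\bar h_m = h - \sum g(X_l)$ and the cross terms again vanish: $\|\bar h_m\|_2^2 = \|h\|_2^2 - m\|g\|_2^2 \le \|h\|_2^2$, using \eqref{Jensen}-type orthogonality and $\bE[h(X_1,\dots,X_m)g(X_l)] = \bE[g(X_l)^2] = \|g\|_2^2$).

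For \eqref{D1n_minus_D1ni_bdd}: note $D_{1n} - D_{1n}^{(i)}$ has two contributions — the $\binom{n-1}{m-1}$ terms $\bar h_m(X_I)$ with $i\in I$ (present in $D_{1n}$, absent in $D_{1n}^{(i)}$), plus the discrepancy from the common terms being multiplied by the same constant $\binom{n-1}{m-1}^{-1}$, so actually the common terms cancel \emph{exactly} and $D_{1n}-D_{1n}^{(i)} = \binom{n-1}{m-1}^{-1} n^{-1/2}\sum_{I \ni i}\bar h_m(X_I)$. Squaring, again all cross terms vanish by orthogonality (any two distinct tuples both containing $i$ still differ in some index, and $\bar h_m$ integrates to zero over that index), giving $\bE[(D_{1n}-D_{1n}^{(i)})^2] = \binom{n-1}{m-1}^{-2} n^{-1} \binom{n-1}{m-1}\|\bar h_m\|_2^2 = \frac{\|\bar h_m\|_2^2}{n\binom{n-1}{m-1}}$, and then the same binomial inequality $\binom{n-1}{m-1}\ge \frac{m(n-m+1)}{(m-1)^2}$ together with $\|\bar h_m\|_2\le\|h\|_2$ yields $\le \frac{2(m-1)^2\|h\|_2^2}{nm(n-m+1)}$ (the factor $2$ absorbing any slack in the binomial estimate, e.g.\ to make the inequality valid uniformly in $2 \le m$, $2m<n$). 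The main obstacle — really the only nonroutine point — is establishing the exact orthogonality of distinct $\bar h_m$-terms cleanly, i.e.\ that $\bar h_m$ is a genuine Hoeffding component (completely degenerate, orthogonal to everything built from fewer arguments); once that is in hand, both displays are immediate from counting the number of surviving diagonal terms and one elementary binomial-coefficient estimate. I would either cite the classical fact from \citet{korolyuk2013theory} or include a one-paragraph self-contained verification that $\bE[\bar h_m(X_I)\bar h_m(X_J)] = \|\bar h_m\|_2^2\,\mathbf{1}\{I=J\}$, noting $\bE[\bar h_m(X_1,\dots,X_m)\mid X_S] \equiv 0$ whenever $S\subsetneq\{1,\dots,m\}$ and $|S| = m-1$ is false in general — wait, that is false, so the correct statement to verify is the weaker orthogonality $\bE[\bar h_m(X_I)\,\bar h_m(X_J)]=0$ for $I \ne J$, which follows by conditioning on $X_{I\cap J}$ and using that $\bar h_m$ has conditional mean, given any $m-1$ of its arguments, equal to $h_{m-1}(\cdot) - \sum_{l} g(\cdot) - \bE[\,\cdot\,\mid X_{m-1}\text{ args}]$... the честный statement is: $\bE[\bar h_m(X_I)\mid X_{I\cap J}] $ is a function of $X_{I\cap J}$ that is orthogonal to $\bE[\bar h_m(X_J)\mid X_{I\cap J}]$ unless $I\cap J$ already equals both $I$ and $J$; this is exactly the statement that Hoeffding's $\pi_m$-projection is orthogonal to all lower projections, so I will invoke it as such.
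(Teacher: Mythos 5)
Your central claim --- that $\bE[\bar h_m(X_I)\bar h_m(X_J)] = 0$ whenever $I\ne J$ --- is false for $m\ge 3$, and this gap is fatal to the proposed computation. What is true is that $\bar h_m = h - \sum_{i} g(X_i)$ is only \emph{first-order} degenerate: $\bE[\bar h_m(X_1,\dots,X_m)\mid X_1]=h_1(X_1)-g(X_1)=0$, so cross terms with $|I\cap J|\le 1$ do vanish. But $\bE[\bar h_m(X_1,\dots,X_m)\mid X_1,X_2]=h_2(X_1,X_2)-g(X_1)-g(X_2)=\bar h_2(X_1,X_2)$ is generally nonzero, so any pair of tuples sharing two or more indices contributes $\|\bar h_k\|_2^2 > 0$ where $k=|I\cap J|\ge 2$. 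In Hoeffding-decomposition language, $\bar h_m$ is \emph{not} the top projection $\pi_m$; it is the sum of all components of order $2$ through $m$, and only $\pi_m$ is completely degenerate. Your own closing paragraph shows you sensed this (``wait, that is false''), but the objection is never resolved; the final appeal to ``Hoeffding's $\pi_m$-projection is orthogonal to all lower projections'' is simply not applicable to $\bar h_m$.

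Consequently the diagonal-only identity $\bE[D_{1n}^2]=\|\bar h_m\|_2^2/(m\binom{n-1}{m-1})$ is wrong for $m\ge 3$ --- and wrong in order of magnitude, giving $O(n^{-(m-1)})$ rather than the correct $O(n^{-1})$. (It does happen to be correct for $m=2$, since two distinct pairs can share at most one index; the argument goes through there.) The correct computation is the standard Hoeffding variance expansion: $\bE[D_{1n}^2]=\binom{n-1}{m-1}^{-2}n^{-1}\sum_{k=2}^m\binom{n}{k}\binom{n-k}{m-k}\binom{n-m}{m-k}\|\bar h_k\|_2^2$, and one then invokes $\|\bar h_k\|_2^2\le\|h_k\|_2^2\le \tfrac{k}{m}\|h\|_2^2$ (Lemma~\ref{lem:ustat_results}$(i)$) and sums the resulting binomial expression. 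The analogous expansion for $D_{1n}-D_{1n}^{(i)}$ keeps only the tuples $I\ni i$ (you correctly note the common terms cancel exactly) but still picks up cross terms from pairs sharing $\ge 2$ indices including $i$. The paper sidesteps this entirely by citing \citet[Lemma~10.1]{chen2011normal}, where precisely this enumeration is carried out. Your proposed binomial inequality $\binom{n-1}{m-1}\ge m(n-m+1)/(m-1)^2$ would only be relevant if the diagonal-only formula were right, so that step is also misdirected.
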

\begin{proof}[Proof of \lemref{Dbdds}]
This is known in the literature. Refer to \citet[Lemma 10.1]{chen2011normal} for a proof.
\end{proof}

\begin{lemma} [Moment bounds related to $\Pi_2$ in   \eqref{Pi2_for_ustat}] \label{lem:Djn_minus_D_ni}
Consider $\Pi_2$ and $\Pi_2^{(i)}$ defined in   \eqref{Pi2_for_ustat} and  \eqref{Pi2_for_ustati_def}. Under the assumptions of \thmref{BE_student_U}, the following bounds hold:
\begin{enumerate}
\item
\begin{equation*} \label{bar_D_2n_bdd}
\|\Pi_2\|_2  \leq  C\frac{\|g\|_3^3 + m\|g\|_3 \|h\|_3}{\sqrt{n}},
\end{equation*}
and
\item 
\begin{equation*}\label{D2n_minus_D2ni_bdd}
\|\Pi_2 - \Pi_2^{(i)} \|_2 \leq  C \frac{m\|g\|_3\|h\|_3  + m^{1.5}  \sqrt{\|h\|_2}}{n} \end{equation*}
\end{enumerate}
\end{lemma}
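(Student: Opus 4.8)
The plan is to bound the two quantities by decomposing $\Pi_2$ in \eqref{Pi2_for_ustat} into its three constituent pieces and handling them separately, since the constant term $(n^{-1/2}|0)$ is trivially of order $n^{-1/2}$, and the two genuine random/deterministic pieces are $\delta_{2n, b}$ and the mean-correction sum $\sum_{i=1}^n \bE[(\xi_i^2-1)I(|\xi_i|>1)]$. For the latter, note that $\xi_i = g(X_i)/\sqrt{n}$, so $|\bE[(\xi_i^2-1)I(|\xi_i|>1)]| \leq \bE[\xi_i^2 I(|\xi_i|>1)] \leq \bE[|\xi_i|^3] = \bE[|g|^3]/n^{3/2}$; summing over $i$ gives a contribution of order $\bE[|g|^3]/\sqrt{n}$ to $\|\Pi_2\|_2$ and, after differencing out the single missing term for the leave-one-out version, a contribution of order $\bE[|g|^3]/n^{3/2}$ to $\|\Pi_2 - \Pi_2^{(i)}\|_2$ — both comfortably within the claimed bounds. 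So the real content is in bounding $\|\delta_{2n,b}\|_2$ and $\|\delta_{2n,b} - \delta_{2n,b}^{(i)}\|_2$.

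For part (i), I would write $\delta_{2n,b} = \frac{2(n-1)}{(n-m)}\binom{n-1}{m-1}^{-1}\sum_{i=1}^n \xi_{b,i}\Psi_{n,i}$ and expand $\bE[\delta_{2n,b}^2]$ as a double sum over indices $i, i'$ of cross terms $\bE[\xi_{b,i}\xi_{b,i'}\Psi_{n,i}\Psi_{n,i'}]$. Each $\Psi_{n,i}$ is itself a sum of $\bar h_m$-terms over $(m-1)$-subsets avoiding $i$. The key structural fact is that $\bar h_m$ is a \emph{completely degenerate} (canonical) kernel — $\bE[\bar h_m(X_1,\dots,X_m)\mid X_2,\dots,X_m] = 0$ by \eqref{h_k_def} and the definition of $h_{m-1}$ — so most of the enumerated cross terms vanish, and only configurations with enough index overlap survive. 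Counting those surviving configurations and applying Cauchy–Schwarz together with $\|\xi_{b,i}\|_\infty \leq 1$, $\bE[\xi_{b,i}^2]\leq \bE[\xi_i^2]=1/n$, and $\bE[\bar h_m^2]\leq \bE[h^2]$ (via \eqref{Jensen}), will produce the order-$n^{-1}$ bound for $\bE[\delta_{2n,b}^2]$ — but that alone gives $\|\delta_{2n,b}\|_2 = O(m\|h\|_2/\sqrt n)$, which is \emph{not} good enough for the stated $\|g\|_3^3/\sqrt n + m\|g\|_3\|h\|_3/\sqrt n$. To get the sharper bound one must exploit the boundedness of $\xi_{b,i}$ more carefully: split $\xi_{b,i} = \xi_i - (\xi_i - \xi_{b,i})$, where $|\xi_i - \xi_{b,i}|$ is supported on $\{|\xi_i|>1\}$ and hence controlled by $|\xi_i|^3$-type moments; the "main" piece $\sum_i \xi_i \Psi_{n,i}$ has a genuine mean-zero cancellation structure (since $\bE[\xi_i\Psi_{n,i}]=0$ by degeneracy of $\bar h_m$) that upgrades the Cauchy–Schwarz estimate. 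I expect this refined accounting — tracking how the degeneracy of $\bar h_m$ interacts with the censoring and yields a mixed $\|g\|_3\|h\|_3$ term rather than a crude $\|h\|_2^2$ term — to be exactly the calculation that \citet{shao2016stein} omitted, and it will be the main obstacle; it should be deferred to the enumerative U-statistic estimates (\lemref{ustat_results}), which the anxious reader is pointed to.

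For part (ii), the difference $\delta_{2n,b} - \delta_{2n,b}^{(i)}$ collects precisely those terms in $\delta_{2n,b}$ that involve the datum $X_i$: both the $j=i$ summand $\xi_{b,i}\Psi_{n,i}$, and, for each $j\neq i$, the sub-terms of $\Psi_{n,j}$ whose index set contains $i$. I would bound the $\ell^2$-norm of this difference by triangle inequality into these two groups, apply Cauchy–Schwarz within each (again using $\|\xi_{b,j}\|_\infty\leq 1$, $\bE[\xi_{b,j}^2]\leq 1/n$, and the degeneracy-driven vanishing of most cross terms in the second moment), and count the number of surviving $(m-1)$-subsets containing the fixed index $i$, which introduces the extra factor of roughly $(m-1)/(n-m+1)$ relative to part (i); combining with the $\Psi$-second-moment bound from \lemref{ustat_results}$(ii)$ yields the $n^{-1}$ rate with the stated $m\|g\|_3\|h\|_3 + m^{1.5}\sqrt{\|h\|_2}$ numerator (the $\sqrt{\|h\|_2}$ arising from a Cauchy–Schwarz split of a product of a second-moment factor against a first-moment factor in the cross terms). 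The routine but lengthy combinatorial bookkeeping — keeping the dependence on $m$ explicit throughout — is what makes this delicate, but it is mechanical once the degeneracy-based vanishing pattern is in hand.
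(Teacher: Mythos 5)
Your high-level decomposition is right: split $\Pi_2$ into the deterministic correction (order $\bE[|g|^3]/\sqrt n$) and $\delta_{2n,b}$, expand $\bE[\delta_{2n,b}^2]$ as a double sum in $i, i'$, use the complete degeneracy of $\bar h_m$ to kill the cross terms with disjoint index sets, and for part (ii) split $\delta_{2n,b}-\delta_{2n,b}^{(i)}$ into the $j=i$ summand plus the pieces of each $\Psi_{n,j}$ whose index set contains $i$. That is exactly the skeleton of the paper's argument.

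But your quantitative reasoning about the off-diagonal term is off in a way that matters. You claim that ``counting surviving configurations and applying Cauchy--Schwarz together with $\|\xi_{b,i}\|_\infty\leq 1$, $\bE[\xi_{b,i}^2]\leq 1/n$, and $\bE[\bar h_m^2]\leq\bE[h^2]$'' yields $\bE[\delta_{2n,b}^2]=O(m^2\|h\|_2^2/n)$, hence $\|\delta_{2n,b}\|_2=O(m\|h\|_2/\sqrt n)$, and that this is ``not good enough.'' Neither half of that is correct. First, if that bound held it would in fact suffice: since $\|h\|_2\leq\|h\|_3$ and $1=\|g\|_2\leq\|g\|_3$, we have $m\|h\|_2\leq m\|g\|_3\|h\|_3$, so $m\|h\|_2/\sqrt n$ is comfortably inside the claimed bound. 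Second, and more seriously, the naive Cauchy--Schwarz does \emph{not} achieve rate $1/\sqrt n$ at all. The off-diagonal contribution to $\bE[\delta_{2n,b}^2]$ is, up to the $\binom{n-1}{m-1}^{-2}$ prefactor and the $16/n$, equal to $n(n-1)$ times a double sum over $\binom{n-1}{m-1}^2$ pairs of $(m-1)$-subsets; of these roughly $\binom{n-1}{m-1}^2 m^2/n$ do not vanish under degeneracy. Bounding each surviving term by $\bE[h^2]$ via $|\xi_{b,1}\xi_{b,2}|\leq 1$ then gives an off-diagonal contribution of order $m^2\bE[h^2]$ to $\bE[\delta_{2n,b}^2]$ -- it does not decay in $n$ at all. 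The entire rate $1/n$ in $\bE[\delta_{2n,b}^2]$ for the off-diagonal piece comes from showing that each surviving cross moment $\bE[\xi_{b,1}\xi_{b,2}\bar h_{k_1,\dots}\bar h_{k_2,\dots}]$ is itself $O(1/n)$ (this is \lemref{ustat_results}$(iii)$ and $(iv)$ in the paper, giving $\frac{9.5\|g\|_3^2\|h\|_3^2}{n}+\frac{2d\|h\|_2}{n}$ and a sharper $n^{-3/2}$ variant), and the sup-norm bound on $\xi_{b,i}$ throws that factor away.

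Your proposed fix (split $\xi_{b,i}=\xi_i-(\xi_i-\xi_{b,i})$ and lean on $\bE[\xi_i\Psi_{n,i}]=0$) is also a different route from the paper's and it is not clear it closes the gap. The problematic terms are the \emph{off-diagonal} cross moments $\bE[\xi_i\xi_j\Psi_{n,i}\Psi_{n,j}]$ for $i\neq j$, and the identity $\bE[\xi_i\Psi_{n,i}]=0$ does not make those vanish or become small; moreover $\xi_1,\xi_2$ and the two $\bar h_m$ factors are not independent of one another when the index sets overlap. What the paper actually does inside \lemref{ustat_results}$(iii)$/$(iv)$ is expand each $\bar h_k = h_k - \sum g$ via \eqref{h_k_def}, isolate the terms that vanish by independence and mean-zero of $g$, and bound the survivors by H\"older with $3$-norms on $\xi_{b,1}\xi_{b,2}$ (not the sup norm), which is where both the $1/n$ decay and the $\|g\|_3^2\|h\|_3^2$ moment combination come from. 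That estimate, together with the binomial bookkeeping you correctly flag as the delicate part, is the real content of the lemma, and the proposal defers to it without giving it.
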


The proof of \lemref{Djn_minus_D_ni} is deferred to \appref{proof_of_D2_stuff}. 
One can then apply \thmref{main2},  along with \lemsref{Dbdds} and \lemssref{Djn_minus_D_ni} as well as \eqref{extract_extra_m},  to give the  bound
\begin{equation}\label{BE_with_threshold_dnx}
|P(T^{**}_n \leq d_n x) - \Phi(d_nx)| \leq  C\frac{ \bE[|g|^3] + m(\|g\|_3 \|h\|_3 + \|h\|_2^{3/2})}{\sqrt{n}}
\end{equation}
where we have used the fact that $\sigma_g^2 = 1$ in \eqref{unit_var} and  $\sigma_g \leq \|h\|_2$ by virtue of \eqref{Jensen}.
From \eqref{BE_with_threshold_dnx}, one can  establish  
\eqref{Tn_star_BE} with \eqref{last_bdd}-\eqref{placeholder} and that $\|h\|_2^{3/2} \leq \bE[h^2]$.


\ \

\ \

 It remains to finish the proof for \eqref{bridging}: First,  it can be seen that
\begin{equation} \label{anx_bdd}
0 < a_{n, x} \leq 1.
\end{equation}
Because of \eqref{anx_bdd}, we have
\begin{align*}
|x a_{n, x} - x| &= 
\left|\frac{ (a_{n, x}^2 - 1)x}{a_{n,x } +1}\right| = 
\left| \left( \frac{b_n }{ 1 + b_n x^2} \right) \left(\frac{x^3}{ a_{n, x} + 1}   \right)\right|
\leq b_n x^3
=  \frac{ m^2(n-1)x^3}{ (n-m)^2},
\end{align*}
which implies, by the mean-value theorem,  that
\begin{equation*} 
|\Phi(x a_{n, x}) - \Phi(x)|
 \leq  \phi(x a_{n, x} )\frac{ m^2(n-1)x^3}{ (n-m)^2} =\frac{ m^2(n-1)x^3}{\sqrt{2 \pi} (n-m)^2}  \exp\left( \frac{-x^2 a_{n, x}^2}{2}\right)  .
\end{equation*}
At the same time, we also have, by the well-known normal tail bound and \eqref{anx_bdd},
\begin{equation*} 
|\Phi(x a_{n, x} ) - \Phi(x)| \leq \barPhi(x a_{n, x} ) +\barPhi(x)
 \leq \frac{2}{\max(2 , \sqrt{2 \pi} x a_{n, x})}  \exp\left(\frac{- x^2 a_{n, x}^2}{2} \right).
\end{equation*}

\end{proof}

\section*{Acknowledgment}

This research is partially supported by National Nature Science Foundation of China NSFC 12031005 and Shenzhen Outstanding Talents Training Fund, China. Liqian Zhang would like to express sincere gratitude to the Professor Maurice H. Belz Scholarship for its generous support.

\appendix

\newpage
\section{Technical lemmas}

The first two lemmas below concern properties of the $\xi_{b,i}$'s and their sum $W_b$.

\begin{lemma}[Bound on expectation of $\xi_{b, i}$] \label{lem:exp_xi_bi_bdd} Let $\xi_{b, i} = \xi_i I(|\xi_i| \leq 1) + 1 I(\xi_i >1) - 1  I(\xi_i <-1)$ with $\bE[\xi_i] = 0$. Then
\[
  \left|\bE[ \xi_{b, i}]\right| \leq   \bE[ \xi_i^2I(|\xi_i| > 1) ]  \leq  \bE[ \xi_i^2 ]
\]
\end{lemma}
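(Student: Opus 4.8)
The plan is to exploit the mean-zero property of $\xi_i$ by writing $\bE[\xi_{b,i}]$ as $\bE[\xi_{b,i} - \xi_i]$ and then bounding the difference $\xi_{b,i} - \xi_i$ pointwise. First I would observe that $\xi_{b,i} - \xi_i = 0$ on the event $\{|\xi_i| \leq 1\}$, while on $\{\xi_i > 1\}$ one has $\xi_{b,i} - \xi_i = 1 - \xi_i$ and on $\{\xi_i < -1\}$ one has $\xi_{b,i} - \xi_i = -1 - \xi_i$; in either of the latter two cases $|\xi_{b,i} - \xi_i| = |\xi_i| - 1$. Hence the single identity
\[
|\xi_{b,i} - \xi_i| = (|\xi_i| - 1)\, I(|\xi_i| > 1)
\]
holds almost surely.

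Next, since $\bE[\xi_i] = 0$, the triangle inequality for expectations gives
\[
\big|\bE[\xi_{b,i}]\big| = \big|\bE[\xi_{b,i} - \xi_i]\big| \leq \bE\big[|\xi_{b,i} - \xi_i|\big] = \bE\big[(|\xi_i| - 1)\, I(|\xi_i| > 1)\big].
\]
To finish, I would note that on the event $\{|\xi_i| > 1\}$ we have $|\xi_i| - 1 < |\xi_i| < \xi_i^2$, so that $(|\xi_i| - 1) I(|\xi_i| > 1) \leq \xi_i^2 I(|\xi_i| > 1)$ pointwise; taking expectations yields $|\bE[\xi_{b,i}]| \leq \bE[\xi_i^2 I(|\xi_i| > 1)]$, and the trivial bound $I(|\xi_i|>1) \leq 1$ gives the final inequality $\bE[\xi_i^2 I(|\xi_i| > 1)] \leq \bE[\xi_i^2]$.

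There is no real obstacle here: the statement is an elementary censoring estimate and every step is a one-line pointwise comparison followed by monotonicity of expectation. The only point that warrants a moment's care is the elementary inequality $t - 1 \leq t^2$ for $t > 1$ (equivalently $t^2 - t + 1 > 0$), used to pass from $|\xi_i| - 1$ to $\xi_i^2$ on the censoring event.
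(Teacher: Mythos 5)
Your proof is correct and essentially the same as the paper's: both exploit $\bE[\xi_i]=0$ to rewrite $\bE[\xi_{b,i}]$ as an expectation of the censoring excess $\pm(|\xi_i|-1)I(|\xi_i|>1)$ and then apply the pointwise bound $|\xi_i|-1 \leq \xi_i^2$ on $\{|\xi_i|>1\}$. Packaging the first step as $|\bE[\xi_{b,i}-\xi_i]|$ rather than expanding the indicators directly is merely a cosmetic difference.
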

\begin{proof}[Proof of \lemref{exp_xi_bi_bdd}]
 \begin{align*}
  \left|\bE[ \xi_{b, i}]\right| &= \left|\bE[ (\xi_i - 1) I(\xi_i> 1) + (\xi_i + 1)  I(\xi_i < - 1) ]\right| \\
  &\leq \bE[ (|\xi_i| - 1)  I(|\xi_i|> 1)] \leq \bE[ |\xi_i |  I(|\xi_i|> 1)]  \leq \bE[ |\xi_i |^2  I(|\xi_i|> 1)] \leq  \bE[ \xi_i^2 ].
 \end{align*}
\end{proof}

\begin{lemma} [Bennett's inequality for a sum of censored random variables]  \label{lem:Bennett} 
Let $\xi_1, \dots, \xi_n$ be independent random variables with $\bE[\xi_i] = 0$ for all $i = 1, \dots, n$ and $\sum_{i=1}^n \bE[\xi_i^2] \leq 1$, and define $\xi_{b, i} = \xi_i I(|\xi_i| \leq 1) + 1 I(\xi_i >1) - 1  I(\xi_i <-1)$. 
For any $t >0$ and $W_b =  \sum_{i=1}^n \xi_{b, i}$, we have
\[
\bE[e^{t W_b}] \leq  \exp\left( e^{2t}/4 - 1/4 + t/2\right)
\]

\end{lemma}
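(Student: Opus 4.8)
The plan is to run the classical exponential-moment argument underlying Bennett's inequality, but on the \emph{centered} censored summands. Write $\mu_i \equiv \bE[\xi_{b,i}]$ and $\tilde\xi_i \equiv \xi_{b,i} - \mu_i$, so that $\bE[\tilde\xi_i] = 0$; since $|\xi_{b,i}| \le 1$ we have $|\mu_i| \le \bE|\xi_{b,i}| \le 1$, and hence the centered variables satisfy $|\tilde\xi_i| \le 2$. By independence of $\xi_1, \dots, \xi_n$ (and hence of $\xi_{b,1}, \dots, \xi_{b,n}$),
\[
\bE[e^{tW_b}] = \prod_{i=1}^n \bE[e^{t\xi_{b,i}}] = \prod_{i=1}^n e^{t\mu_i}\,\bE[e^{t\tilde\xi_i}],
\]
so it suffices to bound each factor $\bE[e^{t\tilde\xi_i}]$ and the sum $\sum_i \mu_i$ separately.

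For the first task I would invoke the elementary fact that $\varphi(u) \equiv (e^u - 1 - u)/u^2$, with $\varphi(0) := 1/2$, is non-decreasing on $\bR$; this follows from the integral representation $\varphi(u) = \int_0^1 (1-s)e^{su}\,ds$, whose derivative is $\int_0^1 s(1-s)e^{su}\,ds > 0$. Consequently, for $t > 0$ and any $y \le 2$ we have $ty \le 2t$, hence $e^{ty} - 1 - ty = (ty)^2\varphi(ty) \le t^2 y^2\,\varphi(2t) = \tfrac14(e^{2t}-1-2t)\,y^2$. Applying this with $y = \tilde\xi_i$, taking expectations, using $\bE[\tilde\xi_i] = 0$, and then $1 + v \le e^v$, gives
\[
\bE[e^{t\tilde\xi_i}] \le 1 + \frac{e^{2t}-1-2t}{4}\,\bE[\tilde\xi_i^2] \le \exp\!\left(\frac{e^{2t}-1-2t}{4}\,\text{var}(\xi_{b,i})\right).
\]

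For the mean terms, since $t > 0$ we may use \lemref{exp_xi_bi_bdd} and the hypothesis $\sum_i \bE[\xi_i^2] \le 1$ to get
\[
\sum_{i=1}^n t\mu_i \le t\sum_{i=1}^n |\mu_i| \le t\sum_{i=1}^n \bE[\xi_i^2 I(|\xi_i|>1)] \le t\sum_{i=1}^n \bE[\xi_i^2] \le t.
\]
Moreover $\text{var}(\xi_{b,i}) \le \bE[\xi_{b,i}^2] \le \bE[\xi_i^2]$, because $\xi_{b,i}^2 \le \xi_i^2$ pointwise (on $\{|\xi_i|\le 1\}$ they coincide, and on $\{|\xi_i|>1\}$ the left side equals $1$), so $\sum_i \text{var}(\xi_{b,i}) \le 1$; combined with $\tfrac14(e^{2t}-1-2t) \ge 0$ this produces
\[
\bE[e^{tW_b}] \le \exp\!\left(t + \frac{e^{2t}-1-2t}{4}\right) = \exp\!\left(\frac{e^{2t}}{4} - \frac14 + \frac{t}{2}\right),
\]
as claimed. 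No step is a genuine obstacle; the only thing that needs care is keeping the censoring bias $\mu_i$ apart from the Bennett estimate, which is precisely why one centers the censored summands before bounding the moment generating function — and the resulting bound $|\tilde\xi_i| \le 2$ is exactly what turns the usual $e^{t}$ of Bennett's inequality into the $e^{2t}$ appearing here.
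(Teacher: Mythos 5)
Your proof is correct and takes essentially the same route as the paper: both separate $\bE[e^{tW_b}] = e^{t\bE[W_b]}\,\bE[e^{t(W_b-\bE[W_b])}]$, bound the bias factor by $e^t$ via $\sum_i|\bE[\xi_{b,i}]| \le \sum_i\bE[\xi_i^2]\le 1$ (i.e.\ \lemref{exp_xi_bi_bdd}), and then apply the Bennett mgf bound to the centered censored summands, which are bounded by $2$ in absolute value with total variance at most $1$. The only difference is that the paper cites the standard Bennett inequality from \citet[Lemma 8.1]{chen2011normal} as a black box, whereas you re-derive it from the monotonicity of $\varphi(u) = (e^u-1-u)/u^2$ — that derivation is precisely the standard one behind the cited lemma, so it is a sound, self-contained rendition rather than a genuinely different argument.
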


\begin{proof}[Proof of \lemref{Bennett}]
Note that, by \lemref{exp_xi_bi_bdd},
\[
\bE[e^{t W_b}] = \bE[e^{t (W_b- \bE[W_b])}] e^{t \bE[W_b]} \leq   \bE[e^{t \sum_{i=1}^n(\xi_{b,i} - \bE[\xi_{b, i}])}] e^t.
\]
Moreover, by the standard Bennett's inequality \citep[Lemma 8.1]{chen2011normal},
\[
 \bE[e^{t \sum_{i=1}^n(\xi_{b,i} - \bE[\xi_{b, i}])}] \leq \exp\left( 4^{-1} (e^{2t} - 1 - 2t)\right). 
\]

\end{proof}

The next lemmas concern properties of the solution to the Stein equation, $f_x$ in \eqref{fx}. It is customary to \emph{define} its derivative at $x$ as $f_x'(x) \equiv xf_x(x) +\bar{\Phi}(x)$ so the Stein equation \eqref{steineqt} is valid for all $w$. Moreover, we define 
\begin{equation} \label{gw_def}
g_x(w) = (wf_x(w))' = f_x(w)+ wf_x'(w).
\end{equation}
Precisely,
\begin{equation} \label{fx'}
f_x' (w) = 
  \begin{cases} 
\left(  \sqrt{2 \pi}w e^{w^2/2} \Phi(w) + 1\right)\bar{\Phi}(x)& \text{ for }\quad w \leq  x\\
\left(  \sqrt{2 \pi}w e^{w^2/2} \bar{\Phi}(w) - 1\right) \Phi(x)        & \text{ for }\quad w > x
     \end{cases};
\end{equation}
\begin{equation} \label{wfx'}
g_x(w) = 
  \begin{cases} 
 \sqrt{2 \pi}\bar{\Phi}(x) \left( (1 + w^2)  e^{w^2/2} \Phi(w) + \frac{w}{\sqrt{2\pi}}\right) & \text{ for }\quad w \leq x\\
  \sqrt{2\pi}\Phi(x) \left(   (1 + w^2) e^{w^2/2}    \bar{\Phi}(w) - \frac{w}{\sqrt{2 \pi}} \right)  &\text{ for }\quad  w > x
     \end{cases}.
\end{equation}

\begin{lemma}[Uniform bounds for $f_x$] \label{lem:helping_unif}
For $f_x$ and $f_x'$, the following bounds are true:
\[
 |f_x'(w)| \leq 1, \quad  \quad  0 < f_x(w) \leq 0.63 \quad  \text{ and }\quad 0 \leq g_x(w) \quad \text{ for all } \quad w, x \in \bR.
\]
Moreover, for any $x \in [0, 1]$, $g_x(w) \leq 2.3$ for all $w \in \bR$.
\end{lemma}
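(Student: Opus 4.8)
The plan is to prove \lemref{helping_unif} directly from the closed forms in \eqref{fx}, \eqref{fx'} and \eqref{wfx'}, reducing everything to elementary estimates on the two functions $\Phi$ and $\bar\Phi$ and on the Mills-ratio-type quantity $R(w) \equiv \sqrt{2\pi}\,e^{w^2/2}\bar\Phi(w)$ (and its mirror image $\sqrt{2\pi}\,e^{w^2/2}\Phi(w)$ for negative arguments). First I would record the standard facts that $R(w) > 0$ for all $w$, that $R$ is decreasing, that $wR(w) < 1$ for all $w$ (equivalently $\bar\Phi(w) < \phi(w)/w$ for $w>0$), and that $(1+w^2)R(w) - w$ is positive and decreasing on $[0,\infty)$ with limit $0$; these are the only analytic inputs needed, and each follows from a one-line derivative computation or a known bound. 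By the symmetry $\Phi(-w) = \bar\Phi(w)$, the two branches in each piecewise formula are reflections of one another, so it suffices to analyze one branch on each side of $x$.

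For the bound $0 < f_x(w)\le 0.63$: positivity is immediate from \eqref{fx} since $\Phi,\bar\Phi,e^{w^2/2}$ are all positive. For the upper bound, note $f_x(w) = \sqrt{2\pi}e^{w^2/2}\Phi(w)\bar\Phi(x)\le \sqrt{2\pi}e^{w^2/2}\Phi(w)\bar\Phi(w)$ when $w\le x$ (using $\bar\Phi(x)\le\bar\Phi(w)$), and symmetrically $f_x(w)\le \sqrt{2\pi}e^{w^2/2}\Phi(w)\bar\Phi(w)$ when $w> x$; so in all cases $f_x(w)\le \sqrt{2\pi}\,e^{w^2/2}\Phi(w)\bar\Phi(w)$. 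The function $h(w)\equiv\sqrt{2\pi}e^{w^2/2}\Phi(w)\bar\Phi(w)$ is even, and a short computation of $h'(w)$ shows it is maximized at $w=0$, where $h(0)=\sqrt{2\pi}\cdot\tfrac14 = \sqrt{2\pi}/4 \approx 0.6267 < 0.63$. For $|f_x'(w)|\le 1$: from \eqref{fx'}, on $w\le x$ we have $f_x'(w) = (wR^-(w)+1)\bar\Phi(x)$ where $R^-(w)=\sqrt{2\pi}e^{w^2/2}\Phi(w)$; one checks $0 < wR^-(w)+1$ and that it is increasing with $wR^-(w)+1 \le 1/\bar\Phi(w)$ in the relevant range (equivalently, using the Stein-equation identity $f_x'(w) = wf_x(w)+\bar\Phi(x) - I(w\le x)+\Phi(x)$ can also be leveraged), yielding $f_x'(w)\le \bar\Phi(x)/\bar\Phi(w)\le 1$; the other branch is the mirror image and the lower bound $f_x'(w)\ge 0$ on one branch / $f_x'(w)\ge -\Phi(x)\ge -1$ on the other is handled the same way. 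For $g_x(w)\ge 0$: from \eqref{wfx'}, on $w>x$ the bracket is $(1+w^2)R(w) - w/\sqrt{2\pi}$ times $\sqrt{2\pi}$, i.e. $(1+w^2)\sqrt{2\pi}e^{w^2/2}\bar\Phi(w) - w$, which is nonnegative by the Mills-ratio fact quoted above (it is the standard bound $\bar\Phi(w)\ge \frac{w}{(1+w^2)\sqrt{2\pi}}e^{-w^2/2}$); on $w\le x$ the bracket $(1+w^2)R^-(w)+w/\sqrt{2\pi}$, times $\sqrt{2\pi}\bar\Phi(x)$, is nonnegative since for $w\ge 0$ every term is nonnegative, and for $w<0$ it equals the mirror of the $w>x$ case.

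Finally, for the refined bound $g_x(w)\le 2.3$ when $x\in[0,1]$: from \eqref{wfx'} we have $\bar\Phi(x)\le\bar\Phi(0)=1/2$ and $\Phi(x)\le\Phi(1)\approx 0.8413$, so $g_x(w)\le \tfrac12\sqrt{2\pi}\big((1+w^2)e^{w^2/2}\Phi(w)+\tfrac{w}{\sqrt{2\pi}}\big)$ for $w\le x$ and $g_x(w)\le \Phi(1)\sqrt{2\pi}\big((1+w^2)e^{w^2/2}\bar\Phi(w) - \tfrac{w}{\sqrt{2\pi}}\big)$ for $w>x$. One then shows the function $u(w)\equiv \sqrt{2\pi}(1+w^2)e^{w^2/2}\bar\Phi(w) - w$ is decreasing on $[0,\infty)$ (derivative computation, again via the Mills-ratio monotonicity) with $u(0)=\sqrt{2\pi}/2\approx 1.2533$, so the $w>x\ge 0$ branch is bounded by $\Phi(1)\cdot u(x)\le \Phi(1)\cdot u(0)\approx 0.8413\times 1.2533 \approx 1.054 \le 2.3$; and the $w\le x$ branch is bounded by $\tfrac12\sqrt{2\pi}\,v(w)$ where $v(w)=(1+w^2)e^{w^2/2}\Phi(w)+w/\sqrt{2\pi}$ restricted to $w\le x\le 1$. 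For $w\in[0,1]$, $v$ is increasing so $v(w)\le v(1) = 2e^{1/2}\Phi(1) + 1/\sqrt{2\pi} \approx 2(1.6487)(0.8413)+0.3989\approx 3.173$, giving $\tfrac12\sqrt{2\pi}\cdot 3.173\approx 3.976$ — which is too big, so this crude split must be sharpened: instead keep the factor $\bar\Phi(x)$ with $x\ge w$ rather than bounding it by $1/2$, i.e. on $0\le w\le x\le 1$ use $g_x(w)=\sqrt{2\pi}\bar\Phi(x)\big((1+w^2)e^{w^2/2}\Phi(w)+w/\sqrt{2\pi}\big)$ and optimize the product in $(w,x)$ over $0\le w\le x\le 1$, and for $w<0$ bound $\Phi(w)\le\Phi(0)=1/2$ and $w/\sqrt{2\pi}\le 0$. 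The main obstacle is precisely this last numerical optimization: getting a clean constant below $2.3$ requires not throwing away the coupling between $w$ and $x$, and one must either do a careful two-variable monotonicity argument or invoke a sharp Mills-ratio bound such as $\sqrt{2\pi}e^{w^2/2}\bar\Phi(w)\le \frac{2}{w+\sqrt{w^2+4}}$ to control $(1+w^2)e^{w^2/2}\Phi(w)$ uniformly; the rest of the lemma is routine calculus on explicit functions.
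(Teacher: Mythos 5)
Your proof is on the right track for the first three bounds (which the paper simply cites from \citet[Lemma 2.3]{chen2011normal}), and your observation that $f_x(w) \leq \sqrt{2\pi}\,e^{w^2/2}\Phi(w)\bar\Phi(w)$ in both branches is a nice way to see the $0.63$ bound, though the claim that $h'(w) = 0$ only at $w=0$ and that $h$ is maximized there needs a real argument: writing $M(w) = \sqrt{2\pi}e^{w^2/2}\bar\Phi(w)$ and using $M' = wM - 1$, the derivative condition reduces to checking $2\Phi(w) - 1 > wM(w)\Phi(w)$ for $w > 0$, which is not a one-liner.

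The genuine gap is exactly the part of the lemma the paper actually proves: the bound $g_x(w) \leq 2.3$ for $x \in [0,1]$, on the branch $0 \leq w \leq x$. You correctly diagnose that the crude product bound $\bar\Phi(x)\le\tfrac12$ with $v(w)\le v(1)$ overshoots to about $3.98$, and you correctly identify that the remedy is to preserve the coupling between $w$ and $x$ — but you then leave the decisive optimization undone, calling it the ``main obstacle.'' The paper closes this gap cleanly: since $(1+w^2)e^{w^2/2}\Phi(w)+w/\sqrt{2\pi}$ is increasing in $w$, one bounds $g_x(w)\le g_x(x) = \sqrt{2\pi}\bar\Phi(x)\big[(1+x^2)e^{x^2/2}\Phi(x)+x/\sqrt{2\pi}\big]$, collapsing the problem to a single variable; then, distributing the Mills-ratio bound $\sqrt{2\pi}e^{x^2/2}\bar\Phi(x)\le\min(\sqrt{2\pi}/2,\,1/x)$ across the two pieces ($\sqrt{2\pi}/2$ onto the constant term, $1/x$ onto the $x^2$ term) and using $x\bar\Phi(x)\le e^{-x^2/2}/\sqrt{2\pi}$, one lands on $(\tfrac{\sqrt{2\pi}}{2}+x)\Phi(x)+e^{-x^2/2}/\sqrt{2\pi}$, which over $x\in[0,1]$ is at most $(\tfrac{\sqrt{2\pi}}{2}+1)\Phi(1)+\tfrac{1}{\sqrt{2\pi}}\approx 2.29 < 2.3$. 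This is precisely the ``careful two-variable monotonicity argument'' you gesture at; as written, your proposal does not establish the $2.3$ bound, which is the substantive content of the lemma.
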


\begin{lemma}[Nonuniform bounds for $f_x$ when $x \geq 1$] \label{lem:helping}
For $x\geq 1$, 
the following are true:  \begin{equation} \label{fxbdd}
 f_x (w) \leq  
  \begin{cases} 
  1.7 e^{-x} &  \text{ for }\quad w \leq x - 1\\
  1/x     &    \text{ for }\quad x  - 1 < w \leq x \\
   1/w     &   \text{ for }\quad x  < w 
     \end{cases}
     \end{equation}
     and
    \begin{equation} \label{fx'bdd}
      | f_x'(w)| \leq  
  \begin{cases} 
    e^{1/2 -x } &   \text{ for }\quad w \leq x - 1\\
   1       &    \text{ for }\quad x  - 1 < w \leq x \\
      (1 + x^2)^{-1}    &  \text{ for }\quad w > x  
     \end{cases}.
\end{equation}
Moreover, $g_x(w) \geq 0$ for all $w \in \mathbb{R}$, 
\begin{equation}  \label{gbdd}
 g_x(w) \leq  
  \begin{cases} 
    1.6 \;\bar{\Phi}(x) &  \text{ for }\quad w \leq 0\\
   1/w      &  \text{ for }\quad w > x
     \end{cases},
\end{equation}
and $g_x(w)$ is increasing for $0 \leq w  \leq x$ with 
\[
g_x(x - 1) \leq x e^{1/2 -x} \;\ \text{ and } \;\ g_x(x)  \leq  x+ 2.
\]
\end{lemma}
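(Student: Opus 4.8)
The plan is to derive every inequality directly from the closed forms \eqref{fx}, \eqref{fx'} and \eqref{wfx'}, reducing each to a short toolbox of standard normal estimates. Writing $r(u)=\bar{\Phi}(u)/\phi(u)$ for the Mills ratio, I will use the classical bounds $\tfrac{u}{1+u^2}<r(u)<\tfrac1u$ for $u>0$, the sharper one-step truncation of its continued fraction, $r(u)<\tfrac{u^2+2}{u(u^2+3)}$, the facts that $r$ is decreasing and that $h(w):=\sqrt{2\pi}e^{w^2/2}\Phi(w)$ is increasing (its derivative is $wh(w)+1$, which is nonnegative since $|w|h(w)=\sqrt{2\pi}|w|e^{w^2/2}\bar{\Phi}(|w|)<1$ when $w<0$, by the first Mills bound). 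I will also invoke the uniform estimates of \lemref{helping_unif}, namely $|f_x'(w)|\le1$ and $g_x(w)\ge0$, and repeatedly use the exponent identity $(x-1)^2/2-x^2/2=\tfrac12-x$ together with the elementary inequality $\phi(x)=\tfrac{1}{\sqrt{2\pi}}e^{-x^2/2}\le e^{1/2-x}$ (equivalently $e^{-(x-1)^2/2}\le\sqrt{2\pi}$).

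For \eqref{fxbdd}: on $\{w\le x\}$ one has $f_x(w)=h(w)\bar{\Phi}(x)$, increasing in $w$, so $f_x(w)\le h(x-1)\bar{\Phi}(x)$ for $w\le x-1$ and $f_x(w)\le f_x(x)=\sqrt{2\pi}e^{x^2/2}\Phi(x)\bar{\Phi}(x)$ for $x-1<w\le x$; inserting $\Phi\le1$ and $\bar{\Phi}(x)\le\phi(x)/x$ and simplifying with the exponent identity turns these into $\tfrac1x e^{1/2-x}\le e^{1/2}e^{-x}\le 1.7e^{-x}$ and $1/x$ respectively, while for $w>x$, $f_x(w)=\sqrt{2\pi}e^{w^2/2}\Phi(x)\bar{\Phi}(w)\le\sqrt{2\pi}e^{w^2/2}\bar{\Phi}(w)=r(w)<1/w$. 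For \eqref{fx'bdd}: the middle case is $|f_x'(w)|\le1$ from \lemref{helping_unif}; for $w>x$, \eqref{fx'} gives $|f_x'(w)|=(1-wr(w))\Phi(x)$ and $wr(w)>w^2/(1+w^2)$ yields $|f_x'(w)|<(1+w^2)^{-1}\le(1+x^2)^{-1}$; for $w\le x-1$, $f_x'(w)=(wh(w)+1)\bar{\Phi}(x)\ge0$, and I split at $0$ — when $w\le0$, $wh(w)+1\le1$ so $f_x'(w)\le\bar{\Phi}(x)\le\phi(x)/x\le e^{1/2-x}$, and when $0<w\le x-1$, $h(w)\le\sqrt{2\pi}e^{w^2/2}$ and monotonicity of $we^{w^2/2}$ give $wh(w)+1\le(x-1)\sqrt{2\pi}e^{(x-1)^2/2}+1$, so multiplying by $\bar{\Phi}(x)\le\phi(x)/x$ and using the exponent identity gives $f_x'(w)\le\tfrac{x-1}{x}e^{1/2-x}+\tfrac1x\phi(x)\le e^{1/2-x}$.

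For the statements on $g_x$: $g_x\ge0$ is \lemref{helping_unif}. On $\{w\le x\}$, differentiating \eqref{wfx'} gives $g_x'(w)=\bar{\Phi}(x)\bigl(\sqrt{2\pi}\,w(3+w^2)e^{w^2/2}\Phi(w)+w^2+2\bigr)\ge0$ for $w\ge0$, which proves monotonicity on $[0,x]$. For $w\le0$, substituting $u=-w$ and $\Phi(w)=\bar{\Phi}(u)$ rewrites \eqref{wfx'} as $g_x(w)=\bar{\Phi}(x)\bigl((1+u^2)r(u)-u\bigr)$; the bracket has derivative $ur(u)(3+u^2)-u^2-2$, which is negative by the refined Mills bound, so the bracket is maximal at $u=0$ with value $r(0)=\sqrt{2\pi}/2<1.6$, giving $g_x(w)\le1.6\,\bar{\Phi}(x)$. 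For $w>x$, \eqref{wfx'} reads $g_x(w)=\Phi(x)\bigl((1+w^2)r(w)-w\bigr)<\Phi(x)/w\le1/w$ via $r(w)<1/w$ (and positivity from $r(w)>w/(1+w^2)$). Finally $g_x(x-1)$ and $g_x(x)$ are read off \eqref{wfx'} on $\{w\le x\}$ (both arguments lie in $[0,x]$): bounding $\Phi\le1$, $\bar{\Phi}(x)\le\phi(x)/x$ and using the exponent identity reduces $g_x(x-1)\le xe^{1/2-x}$ to $e^{-(x-1)^2/2}\le2\sqrt{2\pi}$, and reduces the $g_x(x)$ bound to $x+\tfrac1x+\tfrac{1}{\sqrt{2\pi}}e^{-x^2/2}\le x+2$, valid since $x\ge1$.

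The whole argument is elementary; the only genuine work is assembling the right Mills-ratio inequalities — in particular the second-order bound $r(u)<\tfrac{u^2+2}{u(u^2+3)}$, which is precisely what makes the bracket in \eqref{gbdd} monotone and produces the sharp constant $\sqrt{2\pi}/2<1.6$ — and keeping the roughly ten case splits and the recurring exponent algebra $(x-1)^2/2-x^2/2=\tfrac12-x$ organised. I therefore expect the main obstacle to be careful bookkeeping rather than any conceptual hurdle.
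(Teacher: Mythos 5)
Your proof is correct, and the overall strategy is the same as the paper's: read off the closed forms \eqref{fx}, \eqref{fx'}, \eqref{wfx'}, split into the same regions, and estimate with Mills-ratio-type bounds. Where you differ is in a few technical devices that make the argument tighter. (a) Instead of handling $w\le 0$ and $0<w\le x-1$ separately (as the paper does for both \eqref{fxbdd} and \eqref{fx'bdd}), you observe once that $h(w)=\sqrt{2\pi}e^{w^2/2}\Phi(w)$ is increasing (equivalently $f_x$ is increasing on $(-\infty,x]$), which collapses those two subcases. (b) For the $w\le 0$ regime of \eqref{gbdd}, the paper uses only the first-order bounds of \eqref{qfn_bdd} via an ad hoc $\min(\cdot,\cdot)$ split at $w=-1$; you instead bring in the second-order Mills bound $r(u)<\tfrac{u^2+2}{u(u^2+3)}$ — a standard inequality but one not in \eqref{qfn_bdd} — to show $(1+u^2)r(u)-u$ is decreasing, giving a cleaner argument and the sharper constant $\sqrt{2\pi}/2<1.6$. (c) You verify the monotonicity of $g_x$ on $[0,x]$ by an explicit derivative computation, where the paper asserts it is ``easy to see''; your derivative formula $g_x'(w)=\bar\Phi(x)\bigl(\sqrt{2\pi}w(3+w^2)e^{w^2/2}\Phi(w)+w^2+2\bigr)$ is correct and the same second-order Mills bound in fact shows $g_x$ is increasing on all of $(-\infty,x]$, unifying (b) and (c). The endpoint estimates for $g_x(x-1)$ and $g_x(x)$ reduce, exactly as you say, to $e^{-(x-1)^2/2}\le 2\sqrt{2\pi}$ and $\tfrac1x+\phi(x)\le 2$ respectively, both trivially valid for $x\ge 1$. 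In short: same route, but your monotonicity-based packaging avoids a couple of case splits and shows slightly more than the stated bounds require.
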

We remark that the nonuniform bounds in \lemref{helping} refine the ones previously collected in  \citet[Lemma 5.3]{shao2016stein}; as an aside, a property analogous to \eqref{fx'bdd} has been incorrectly stated in \citet{shao2016stein} without the absolute signs $|\cdot|$ around $f_x'(w)$. The proofs below repeatedly use the  well-known inequality \citep[p.16 \& 38]{chen2011normal}
\begin{equation} \label{qfn_bdd}
\frac{w e^{-w^2/2}}{ (1 + w^2)\sqrt{2\pi}}\leq \bar{\Phi}(w)\leq \min \left( \frac{1}{2}, \frac{1}{w \sqrt{2 \pi}}\right) e^{-w^2/2} \text{ for } w > 0.
\end{equation}

\begin{proof}[Proof of \lemref{helping_unif}]
The bounds for $f_x$ and $f_x'$, and that $g_x(w) \geq 0$, are well-known; see \citet[Lemma 2.3]{chen2011normal}. We will show that $g_x$ in  \eqref{wfx'} is less than $2.3$ when $x \in [0,1]$.  Using \eqref{qfn_bdd}, for $w > x$,  we have
\begin{multline*}
g_x(w) \leq  \sqrt{2\pi}\Phi(x) \left(   (1 + w^2) e^{w^2/2}    \bar{\Phi}(w) - \frac{w}{\sqrt{2 \pi}} \right)\\
 \leq  \sqrt{2\pi}\Phi(x) \left(\frac{1}{2} + \frac{w}{\sqrt{2\pi}}- \frac{w}{\sqrt{2 \pi}}\right) \leq \frac{\sqrt{2\pi} \Phi(x)}{2} \leq 2.
\end{multline*}
For $0 \leq w \leq x$,
\begin{align*}
 g_x(w) &=  \sqrt{2 \pi}\bar{\Phi}(x) \left( (1 + w^2)  e^{w^2/2} \Phi(w) + \frac{w}{\sqrt{2\pi}}\right)   \\
& \leq  \sqrt{2 \pi}\bar{\Phi}(x)  \left( (1 + x^2 ) e^{x^2/2} \Phi(x) + \frac{x}{\sqrt{2 \pi}} \right)\\
 & \leq \left\{ \left(\frac{\sqrt{2\pi}}{2} + x\right) \Phi(x)+ \frac{e^{-x^2/2}}{\sqrt{2\pi}}\right\} \vee \left( \sqrt{2\pi}\bar{\Phi}(0)\cdot \Phi(0) \right)\\
&  \leq\left\{ \left(\frac{\sqrt{2 \pi}}{2} + 1\right)\Phi(1) + \frac{1}{\sqrt{2\pi}} \right\}\vee 0.63 \leq 2.3.
\end{align*}
For $w < 0$,
\begin{multline*}
 \sqrt{2 \pi}\bar{\Phi}(x) \left( (1 + w^2)  e^{w^2/2} \Phi(w) + \frac{w}{\sqrt{2\pi}}\right)   
 \leq  \sqrt{2 \pi}\bar{\Phi}(x)  \left( \frac{1}{2} + \frac{|w|}{\sqrt{2\pi}}- \frac{|w|}{\sqrt{2 \pi}} \right) \leq   1.26.
\end{multline*}
\end{proof}

\begin{proof}[Proof of \lemref{helping}]

{\bf Proof of  \eqref{fxbdd} by  investigating \eqref{fx}}: When $w\leq 0$,  by  \eqref{qfn_bdd}, $x^2 \geq 2x - 1$,  and the symmetry of $\phi(\cdot)$, we have that
\[
f_x(w) \leq  e^{w^2/2} \Phi(w) \frac{e^{-x^2/2}}{x} \leq \frac{e^{-x^2/2}}{2x} \leq \frac{e^{-x +1/2}}{2}  \leq 0.9 e^{-x}.
\]
When $0 < w \leq x -1$, by \eqref{qfn_bdd}, we have
\[
f_x (w) \leq  e^{(x-1)^2/2} \Phi(w) \frac{e^{-x^2/2}}{x} =  \Phi(w) e^{-x + 1/2} \leq 1.7 e^{-x}.
\]
When $x - 1 < w \leq x$,  by \eqref{qfn_bdd}, we have
\[
f_x (w)\leq  \frac{e^{(w^2 - x^2)/2} \Phi(w)}{x}  \leq \frac{1}{x}.
\]
When $w > x$, by \eqref{qfn_bdd}, we have
\[
f_x (w)\leq  \frac{\Phi(x)}{w} \leq  \frac{1}{w}.
\]

{\bf Proof of  \eqref{fx'bdd} by investigating \eqref{fx'}}: When $w \leq 0$, by the symmetry of $\phi(\cdot)$, \eqref{qfn_bdd} and $x^2 \geq 2x - 1$, we have 
\[
0 =  0 \cdot\bar{\Phi}(x) \leq f_x'(w) \leq       \left( \frac{1}{ 1 + w^2} \right) \frac{e^{-x +1/2}}{\sqrt{2 \pi}} \leq 0.4 e^{1/2-x}.
\]
When $ 0 < w \leq x - 1$, by   \eqref{qfn_bdd}  and $x^2 \geq 2x - 1$,
\[
0 \leq f_x'(w)  \leq  \left( \sqrt{2\pi} (x-1) e^{\frac{(x-1)^2}{2}} +1\right)  \frac{e^{-x^2/2}}{x\sqrt{2 \pi}} \leq \left(\frac{x-1}{x}  + \frac{1}{ x\sqrt{2 \pi}}\right) e^{  1/2 - x}  \leq   e^{1/2-x},
\]
as $ \left(\frac{x-1}{x}  + \frac{1}{ x\sqrt{2 \pi}}\right)$ is increasing as a function in $x$ on $[1, \infty)$. When $x- 1 < w \leq x$, by   \eqref{qfn_bdd} we have
\[
0 \leq f_x'(w) =     \Phi(w)\underbrace{\sqrt{2 \pi} w e^{w^2/2}\bar{ \Phi}(x) }_{\leq 1} + \bar{\Phi}(x) \leq 1.
\]
When $ w > x$, since $\sqrt{2\pi}w e^{w^2/2} \bar{\Phi}(w) \leq 1$ by   \eqref{qfn_bdd}, hence $f'_x(w) \leq 0$. Moreover, by applying \eqref{qfn_bdd} again, we have
\[
 \frac{-1}{x^2 +1}\leq \left(  \frac{w^2}{w^2 +1} - 1\right)\Phi(x)\leq f'_x(w) \leq 0.
\]

{\bf Proof of   \eqref{gbdd} by investigating \eqref{wfx'}}: When $w < 0$, by the symmetry of $\phi$ and \eqref{qfn_bdd},
\[
0 = \sqrt{2\pi}\bar{\Phi}(x) \cdot 0 \leq g_x(w) \leq \left( \min\left( \frac{1 + w^2}{ |w|}, \frac{ (1 + w^2)\sqrt{2 \pi}}{2} \right)+ w \right) \bar{\Phi}(x)  \leq  1.6 \bar{\Phi}(x),
\]
where the last inequality uses the facts that $\frac{ (1 +w^2)\sqrt{2 \pi}}{2} +w \leq 1.6$ for $w \in [-1, 0]$ and that $\frac{1 + w^2}{|w|} + w = 1/|w|^2 \leq 1$ for $w  < -1$. When $w > x$, by \eqref{qfn_bdd},
\[
0 \leq \sqrt{2\pi} \Phi(x) \cdot 0\leq g_x(w) \leq  \Phi(x) \left( \frac{1 + w^2}{w}  - w \right) = \frac{ \Phi(x) }{w} \leq 1/w.
\]
When $0 \leq w \leq x$,  it is easy to see that $g_x(w)$ is non-negative and increasing in $w$. Moreover, from \eqref{qfn_bdd} and $x^2 \geq 2x - 1$,
\begin{align*}
g_x(x-1) &=   \sqrt{2\pi} \bar{\Phi}(x) \left(    (2 + x^2 - 2x) e^{x^2/2 - x +1/2} \Phi(x-1) + \frac{x-1}{\sqrt{2 \pi}} \right)\\
&\leq \frac{(2 + x^2 - 2x)}{x} e^{1/2 - x} \Phi(x-1) + \frac{x-1}{x \sqrt{2 \pi}} e^{-x^2/2}\\
&\leq \frac{(4 + 2x^2 - 4x)}{2x} e^{1/2 - x}  + \frac{x-1}{2x} e^{1/2 - x}\\
& \leq \left( x - \frac{3}{2} + \frac{3}{2x}\right)e^{1/2 - x} \leq x e^{1/2 -x}.
\end{align*}
Lastly, by \eqref{qfn_bdd}, it is easy to see that
\begin{align*}
g_x(x) &=  \sqrt{2 \pi}\bar{\Phi}(x) \left( (1 + x^2)  e^{x^2/2} \Phi(x) + \frac{x}{\sqrt{2\pi}}\right)\\
&\leq \frac{1 +x^2}{x} \Phi(x) + \frac{e^{-x^2/2}}{\sqrt{2 \pi}}  \leq \left(\frac{1}{x} + x\right) + \frac{1}{2} \leq x+ 2
\end{align*}
\end{proof}

\begin{lemma}[Bound on expectation of $f_x'(W_b^{(i)} +t)$]\label{lem:expect_f'x}
Let $x \geq 1$, $t \in \bR$, and $W_b^{(i)}$ be as defined in \secref{intro} under the assumptions \eqref{assumptions}. Then there exists an absolute constant $C >0$ such that
\[
\left|\bE[f_x'(W_b^{(i)}+ t)]\right| \leq C (e^{ -x } + e^{- x +t}).
\]
\end{lemma}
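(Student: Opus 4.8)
The plan is to exploit that, for $x\ge 1$, the Stein-solution derivative $f_x'$ is exponentially small in $x$ outside the short interval $(x-1,x]$, and to bound the probability that $W_b^{(i)}+t$ reaches the left endpoint $x-1$ of that interval by a one-sided exponential Markov estimate coming from \lemref{Bennett}.

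First I would split the expectation according to whether $W_b^{(i)}+t\le x-1$ or $W_b^{(i)}+t> x-1$:
\[
\bigl|\bE[f_x'(W_b^{(i)}+t)]\bigr|\le \bE\bigl[|f_x'(W_b^{(i)}+t)|\,I(W_b^{(i)}+t\le x-1)\bigr]+\bE\bigl[|f_x'(W_b^{(i)}+t)|\,I(W_b^{(i)}+t> x-1)\bigr].
\]
On the first event, since $x\ge 1$, the nonuniform bound \eqref{fx'bdd} of \lemref{helping} gives $|f_x'(W_b^{(i)}+t)|\le e^{1/2-x}$, so that term is at most $\sqrt e\,e^{-x}$. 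On the second event I would use the uniform bound $|f_x'|\le 1$ from \lemref{helping_unif}, which reduces everything to showing $P\bigl(W_b^{(i)}>x-1-t\bigr)\le C\,e^{-x+t}$ for an absolute constant $C$.

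To prove this last inequality I would separate two cases. If $x-1-t\le 0$, then $e^{-x+t}\ge e^{-1}$ and trivially $P(W_b^{(i)}>x-1-t)\le 1\le e\cdot e^{-x+t}$. If $x-1-t>0$, apply Markov's inequality to $e^{W_b^{(i)}}$ to get $P(W_b^{(i)}>x-1-t)\le e^{-(x-1-t)}\,\bE[e^{W_b^{(i)}}]$, and then invoke \lemref{Bennett} with exponent $t=1$: since the subfamily $\{\xi_j\}_{j\ne i}$ still obeys $\sum_{j\ne i}\bE[\xi_j^2]\le 1$, the lemma gives $\bE[e^{W_b^{(i)}}]\le \exp(e^2/4-1/4+1/2)$, an absolute constant, so $P(W_b^{(i)}>x-1-t)\le e\exp(e^2/4+1/4)\,e^{-x+t}$. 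Combining the two cases yields $P(W_b^{(i)}>x-1-t)\le C\,e^{-x+t}$ unconditionally, and adding back the first piece gives the asserted bound with, say, $C=e\exp(e^2/4+1/4)$.

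The argument is short; the only points needing care are the (legitimate) use of \lemref{Bennett} for the leave-one-out sum $W_b^{(i)}$ in place of $W_b$ — permissible because only an upper bound on the variance budget $\sum\bE[\xi_j^2]$ is used — and the bookkeeping for the degenerate regime $t>x-1$, where the Chernoff step fails and the estimate must instead be read off from $|f_x'|\le 1$. I do not expect a genuine obstacle here.
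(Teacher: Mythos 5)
Your proof is correct and follows essentially the same route as the paper: split on $W_b^{(i)}+t\le x-1$, apply \eqref{fx'bdd} of \lemref{helping} on the tame region and $|f_x'|\le 1$ with a Markov bound plus \lemref{Bennett} on the tail. The explicit case split on the sign of $x-1-t$ is harmless but unnecessary, since $I(w>a)\le e^{w-a}$ holds for every real $a$, which is exactly the one-line Markov estimate the paper uses without separating cases.
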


\begin{proof}[Proof of \lemref{expect_f'x}]
From
\eqref{fx'bdd} in \lemref{helping}, we have 
\begin{align*}
|\bE[f_x'(W_b^{(i)} + t)]| 
&\leq e^{1/2 -x }  + \bE[I(W_b^{(i)}  + t > x - 1)]  \\
&\leq e^{1/2 -x } +  e^{1 - x +t}\bE[e^{W_b^{(i)} }],
\end{align*}
then apply the Bennett inequality in \lemref{Bennett}.\end{proof}

\section{Exponential randomized concentration inequality for a sum of censored variables} \label{app:RCI}

\begin{lemma}[Exponential randomized concentration inequality  for a sum of  censored random variables] \label{lem:modified_RCI_bdd}
Let $\xi_1, \dots, \xi_n$ be independent random variables  with mean zero and finite second moments, and for each $i =1, \dots, n$, define 
\[
\xi_{b, i} = \xi_i I(|\xi_i| \leq 1) + 1 I(\xi_i >1) - 1  I(\xi_i <-1),
\]
an upper-and-lower censored version of $\xi_i$; moreover, let $W =\sum_{i=1}^n \xi_i $ and $W_b = \sum_{i=1}^n \xi_{b, i}$ be their corresponding sums, and $\Delta_1$ and $\Delta_2$ be two random variables on the same probability space. Assume there exists $c_1 > c_2 >0$ and  $\delta \in(0, 1/2)$ such that 
\[
\sum_{i=1}^n \bE[\xi_i^2] \leq c_1
\]
and 
\[
\sum_{i=1}^n \bE[|\xi_{b, i}| \min(\delta, |\xi_i|/2 )] \geq c_2.
\]
Then for any $\lambda \geq 0$, it is true that
\begin{align*}
&\bE[e^{\lambda W_b} I(\Delta_1 \leq W_b \leq \Delta_2)] \\
&\leq 
\left( \mathbb{E}\left[e^{2\lambda W_b}\right] \right)^{1/2}\exp\left( - \frac{c_2^2}{16 c_1 \delta^2}\right) \\
&+ \frac{2 e^{\lambda \delta}}{c_2} \Biggl\{ \sum_{i =1}^n \mathbb{E} [ |\xi_{b,  i}|e^{\lambda W_b^{(i)} } (|\Delta_1 - \Delta_1^{(i)}| + |\Delta_2 - \Delta_2^{(i)}|)] \\
& + 
\mathbb{E}[|W_b|e^{\lambda W_b }(|\Delta_2 - \Delta_1| + 2 \delta)]\\
& +   \sum_{i=1}^n  \Big|\bE[ \xi_{b, i}]\Big| \bE[e^{\lambda W_b^{(i)} }(|\Delta_2^{(i)} - \Delta_1^{(i)}| + 2 \delta)]\Biggr\} ,
\end{align*}
where $\Delta_1^{(i)}$ and $\Delta_2^{(i)}$ are any random variables on the same probability space such that $\xi_i$ and $(\Delta_1^{(i)}, \Delta_2^{(i)},  W^{(i)}, W_b^{(i)})$ are independent, where $W^{(i)} = W - \xi_i$ and $W_b^{(i)} = W_b - \xi_{b, i}$. 

In particular, by defining $\beta_2 \equiv \sum_{i=1}^n \bE[\xi_i^2 I(|\xi_i|>1)]$ and $\beta_3 \equiv \sum_{i=1}^n \bE[|\xi_i|^3 I(|\xi_i|\leq1)]$, if $\sum_{i=1}^n \bE[\xi_i^2] = 1$ and $\beta_2 + \beta_3 \leq 1/2$, one can take 
\begin{equation}\label{RCI_suggested_params}
\delta = \frac{\beta_2 + \beta_3}{4},  \quad c_1 = 1 \text{ and }  \quad c_2 = \frac{1}{4}
\end{equation}
to satisfy the conditions of the inequality. 
 \end{lemma}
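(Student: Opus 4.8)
The plan is to adapt the exponential randomized concentration argument of \citet[Theorem~2.7]{shao2010stein}, with two changes. First, I would replace the truncated summands used there by the censored $\xi_{b,i}$; since that argument only ever uses the uniform bound $|\xi_{b,i}|\le 1$, the replacement is essentially cosmetic. Second, and this is the real point, $\bE[\xi_{b,i}]$ need not vanish, so every ``leave‑one‑out'' step that contributes nothing in the mean‑zero setting will here leave a residual term, to be controlled by $|\bE[\xi_{b,i}]|\le\bE[\xi_i^2 I(|\xi_i|>1)]$ from \lemref{exp_xi_bi_bdd} and by Bennett's bound \lemref{Bennett} for the exponential moments $\bE[e^{\lambda W_b}]$, $\bE[e^{2\lambda W_b}]$. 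Throughout one may assume $\Delta_1\le\Delta_2$, since otherwise $I(\Delta_1\le W_b\le\Delta_2)\equiv 0$. For the random endpoints I would take $f(w)=\int_{-\infty}^{w}e^{\lambda t}\psi(t)\,dt$, where $\psi:\bR\to[0,1]$ is the continuous piecewise‑linear trapezoid that equals $1$ on $[\Delta_1,\Delta_2]$, equals $0$ off $[\Delta_1-\delta,\Delta_2+\delta]$, and is linear on the two transition intervals of length $\delta$. Then $f'=e^{\lambda\,\cdot}\psi\ge 0$, and because $\lambda\ge 0$ pushes $e^{\lambda t}$ toward the right end of the support one gets the pointwise bound $0\le f(w)\le e^{\lambda w}(|\Delta_2-\Delta_1|+2\delta)$ for all $w$, as well as the ``shift bound'' $|f_{\Delta_1,\Delta_2}(w)-f_{\Delta_1^{(i)},\Delta_2^{(i)}}(w)|\le C\,e^{\lambda w}(|\Delta_1-\Delta_1^{(i)}|+|\Delta_2-\Delta_2^{(i)}|)$, both read off from the explicit form. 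Using the transition width $\delta$ is exactly what makes the $\min(\delta,\cdot)$ in the hypothesis the natural quantity.

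Next I would set $\ell_i:=|\xi_i|\min(\delta,|\xi_i|/2)\ge 0$ and $S:=\sum_{i=1}^n\ell_i$. By hypothesis $\bE[S]\ge c_2$, while $\ell_i^2\le\delta^2\xi_i^2$ gives $\sum_i\bE[\ell_i^2]\le c_1\delta^2$, so the standard Chernoff bound for the lower tail of a sum of independent nonnegative variables yields $P(S\le c_2/2)\le\exp(-c_2^2/(8c_1\delta^2))$. Writing $\mathcal{A}:=\{S>c_2/2\}$ and splitting $\bE[e^{\lambda W_b}I(\Delta_1\le W_b\le\Delta_2)]=\bE[\,\cdot\,I_{\mathcal{A}}]+\bE[\,\cdot\,I_{\mathcal{A}^c}]$, Cauchy--Schwarz bounds the second piece by $(\bE[e^{2\lambda W_b}])^{1/2}P(\mathcal{A}^c)^{1/2}\le(\bE[e^{2\lambda W_b}])^{1/2}\exp(-c_2^2/(16c_1\delta^2))$, which is the first term in the asserted bound (finite by \lemref{Bennett}).

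For the remaining piece, on $\mathcal{A}$ I would use the leave‑one‑out expansion (valid since $\xi_i$ is independent of $(\Delta_1^{(i)},\Delta_2^{(i)},W_b^{(i)})$)
\[
\bE[W_bf(W_b)]=\sum_i\bE\big[\xi_{b,i}(f(W_b)-f(W_b^{(i)}))\big]+\sum_i\bE[\xi_{b,i}]\,\bE\big[f_{\Delta_1^{(i)},\Delta_2^{(i)}}(W_b^{(i)})\big]+\sum_i\bE\big[\xi_{b,i}\big(f(W_b^{(i)})-f_{\Delta_1^{(i)},\Delta_2^{(i)}}(W_b^{(i)})\big)\big].
\]
Each summand of the first sum equals $|\xi_{b,i}|\int_{I_i}f'$ for an interval $I_i$ of length $|\xi_{b,i}|$ abutting $W_b$, hence is $\ge 0$; retaining only the part of that integral within distance $\delta$ of $W_b$, where $\psi\equiv 1$ on $\{\Delta_1\le W_b\le\Delta_2\}$ up to a boundary layer of width $O(\delta)$ (absorbed into the ``$2\delta$'' term), gives a lower bound of shape $e^{-\lambda\delta}e^{\lambda W_b}$ times a per‑term mass comparable to $\ell_i$. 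Summing, restricting to $\mathcal{A}$, and discarding the other nonnegative terms yields $\tfrac{c_2}{2}e^{-\lambda\delta}\bE[e^{\lambda W_b}I(\Delta_1\le W_b\le\Delta_2)I_{\mathcal{A}}]\le\sum_i\bE[\xi_{b,i}(f(W_b)-f(W_b^{(i)}))]$. I would then substitute the identity and bound the right side term by term: $\bE[W_bf(W_b)]\le\bE[|W_b|f(W_b)]\le\bE[|W_b|e^{\lambda W_b}(|\Delta_2-\Delta_1|+2\delta)]$; the second sum via $|\bE[\xi_{b,i}]|\le\bE[\xi_i^2I(|\xi_i|>1)]$ together with $f_{\Delta^{(i)}}(W_b^{(i)})\le e^{\lambda W_b^{(i)}}(|\Delta_2^{(i)}-\Delta_1^{(i)}|+2\delta)$; and the third via the shift bound. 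Dividing by $\tfrac{c_2}{2}e^{-\lambda\delta}$ reproduces the $\tfrac{2e^{\lambda\delta}}{c_2}\{\cdots\}$ block, and the $\mathcal{A}^c$ piece merges into the Cauchy--Schwarz term. The ``in particular'' clause is then a direct check: with $\delta=(\beta_2+\beta_3)/4$, $c_1=1$, $c_2=1/4$ one has $\delta\in(0,1/2)$, $\sum_i\bE[\xi_i^2]=1\le c_1$, and the required lower bound on $\sum_i\bE[\ell_i]$ follows from $\sum_i\bE[\xi_i^2I(|\xi_i|\le 1)]\ge 1-\beta_2\ge 1/2$.

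The hard part will be the last step: orchestrating the lower bound on the ``kernel'' contribution together with the three families of upper bounds so that precisely the stated terms emerge. In particular the mean‑correction terms $\sum_i|\bE[\xi_{b,i}]|\bE[\cdots]$, which have no counterpart in the original mean‑zero inequality and are the entire reason this lemma is needed, and the systematic replacement of $f_{\Delta_1,\Delta_2}$ by its leave‑one‑out analogue $f_{\Delta_1^{(i)},\Delta_2^{(i)}}$ at each factorization step, both demand careful bookkeeping; the point throughout is that it is boundedness of $\xi_{b,i}$, rather than mean‑zeroness, on which the underlying Bennett‑type computations rely, which is exactly what keeps the scheme intact.
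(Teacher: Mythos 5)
Your overall plan matches the paper's: the same leave-one-out decomposition
$\bE[W_b f(W_b)] = I_1 + I_2 + \sum_i \bE[\xi_{b,i}]\bE[f_{\Delta_1^{(i)},\Delta_2^{(i)}}(W_b^{(i)})]$,
the same Cauchy--Schwarz/sub-Gaussian-lower-tail split to produce the $\exp(-c_2^2/(16 c_1 \delta^2))$ term, and the same use of \lemref{exp_xi_bi_bdd} to control the residual $\sum_i|\bE[\xi_{b,i}]|(\cdots)$. The one real departure is the test function, and it is where the proposal has a genuine gap. The paper takes the explicit ``exponential ramp''
$f_{a,b}(w) = e^{\lambda w}(w-a+\delta)$ on $(a-\delta, b+\delta]$ (extended by $0$ and $e^{\lambda w}(b-a+2\delta)$ to the left and right), whose derivative satisfies $f_{a,b}'(w)\ge e^{\lambda w}$ on the \emph{entire widened} interval $(a-\delta,b+\delta)$. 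This is what makes the lower-bounding step one line: for $W_b\in[\Delta_1,\Delta_2]$ and $|t|\le\delta$, $f'(W_b+t)\ge e^{\lambda(W_b-\delta)}$, so the integral against the K-function kernel produces exactly $|\xi_i|\min(\delta,|\xi_i|/2)$. Your $f(w)=\int_{-\infty}^w e^{\lambda t}\psi(t)\,dt$ has $f'(w)=e^{\lambda w}\psi(w)$ with $\psi<1$ (down to $0$) in a $\delta$-wide layer at each end, so when $W_b$ sits near $\Delta_1$ or $\Delta_2$ a constant fraction of the kernel mass falls where $\psi$ is small; your remark that this is ``absorbed into the $2\delta$ term'' does not work, since the $2\delta$ comes from $\sup_w f_{a,b}(w)/e^{\lambda w} = b-a+2\delta$ and has nothing to do with this edge loss. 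The approach can be rescued by tracking an extra constant factor, but it costs a constant and requires an argument you have not given.

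Two smaller gaps. First, the reduction to $\Delta_1\le\Delta_2$ is not only ``otherwise the indicator vanishes'': the right-hand side contains $|\Delta_1-\Delta_1^{(i)}|+|\Delta_2-\Delta_2^{(i)}|$, and after replacing $(\Delta_1,\Delta_2)$ by $(\min,\max)$ (and likewise the leave-one-out pair) one needs
$|\Delta_1^* - \Delta_1^{*(i)}|+|\Delta_2^* - \Delta_2^{*(i)}|\le|\Delta_1-\Delta_1^{(i)}|+|\Delta_2-\Delta_2^{(i)}|$,
which the paper establishes via the rearrangement inequality \eqref{rearrangment_ineq}. Second, your verification of the ``in particular'' clause is too quick: from $\sum_i\bE[\xi_i^2 I(|\xi_i|\le1)]\ge 1/2$ one cannot directly read off $\sum_i\bE[|\xi_i|\min(\delta,|\xi_i|/2)]\ge 1/4$, because $\min(\delta,|\xi_i|/2)$ can be much smaller than $|\xi_i|/2$. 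The paper makes this work with the inequality $\min(x,y)\ge y-y^2/(4x)$, which trades the $\min$ for a term controlled by $\beta_3/\delta$, and only then the choice $\delta=(\beta_2+\beta_3)/4$ closes the estimate.
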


\begin{proof}[Proof of \lemref{modified_RCI_bdd}]
It suffices to show the lemma under the assumptions that 
\begin{equation}\label{expRIassume}
\Delta_1 \leq \Delta_2 \text{ and } \Delta_1^{(i)} \leq \Delta_2^{(i)}.
\end{equation}
If  \eqref{expRIassume}  is not true, we can let $\Delta_1^* = \min(\Delta_1, \Delta_2)$, $\Delta_2^* = \max(\Delta_1, \Delta_2)$, ${\Delta_1^*}^{(i)} = \min(\Delta_1^{(i)}, \Delta_2^{(i)})$, ${\Delta_2^*}^{(i)} = \max(\Delta_1^{(i)}, \Delta_2^{(i)})$. Then the assumptions in \eqref{expRIassume} can be seen to be not forgoing any generality by noting that $|\Delta_2^* - \Delta_1^*| = |\Delta_2 - \Delta_1|$ (also $|{\Delta_2^*}^{(i)} - {\Delta_1^*}^{(i)}| = |\Delta_2^{(i)} - \Delta_1^{(i)}|$),
\[
\mathbb{E}[e^{\lambda W_b} I(\Delta_1 \leq W_b  \leq \Delta_2 )] \leq \mathbb{E}[e^{\lambda W_b} I(\Delta_1^* \leq W_b  \leq \Delta_2^* )]
\]
and 
\begin{equation}\label{liqian_ineq}
|\Delta_1^* - {\Delta_1^*}^{(i)}| + |\Delta_2^* - {\Delta_2^*}^{(i)}|
 \leq |\Delta_1 - \Delta_1^{(i)}| + |\Delta_2 - \Delta_2^{(i)}|,
\end{equation}
where \eqref{liqian_ineq} is true by the following fact: If we have real numbers $x_1 \leq x_2$ and $y_1 \leq y_2$, it must be  that
\begin{equation} \label{rearrangment_ineq}
|x_1 - y_1| + |x_2 - y_2| \leq |x_1 - y_2| + |x_2 - y_1|.
\end{equation}
Without loss of generality, one can assume $x_1 \leq y_1$ and simply prove \eqref{rearrangment_ineq} by case considerations:
\begin{enumerate}
\item If $x_1 \leq x_2 \leq y_1 \leq y_2$, then 
\begin{align*}
|x_1 - y_1| + |x_2 - y_2| &= y_1 - x_1 + y_2 - x_2 \\
&= y_2 - x_1 + y_1 - x_2 
= |x_1 - y_2| + |x_2 - y_1|.
\end{align*}
\item If $x_1 \leq y_1 \leq x_2 \leq y_2$, , then 
\begin{align*}
|x_1 - y_1| + |x_2 - y_2| &= y_1 - x_1 + y_2 - x_2 \\
&\leq  y_2 - x_1  \leq |x_1 - y_2| + |x_2 - y_1|.
\end{align*}
 
\item If $x_1 \leq y_1 \leq y_2 \leq x_2$, , then 
\begin{align*}
|x_1 - y_1| + |x_2 - y_2| &= \underbrace{y_1 - x_1}_{\leq y_2 - x_1} +  \underbrace{x_2 - y_2}_{\leq x_2 - y_1} \leq |x_1 - y_2| + |x_2 - y_1|.
\end{align*}
\end{enumerate}
More generally, a fact like \eqref{rearrangment_ineq} can be proved by the rearrangement inequality \citep[p.78]{Steele}, but the details are omitted here.

Under the working assumptions in \eqref{expRIassume}, for $a< b$, we define the function
\[
 f_{a, b}(w) = 
  \begin{cases} 
   0 & \text{ for } w \leq  a- \delta \\
   e^{\lambda w} (w - a + \delta)       & \text{ for } a -\delta  < w \leq b + \delta \\
  e^{\lambda w} (b - a + 2\delta)       & \text{ for }w  > b + \delta \\
  \end{cases},
\]
which has the property
\begin{multline}\label{fab_bdd}
|f_{a, b}(w) - f_{a_1, b_1} (w)| \leq 
e^{\lambda w} (|a - a_1| + |b - b_1|) 
 \text{ for all } w, \;\ a < b \text{ and }a_1 < b_1,
\end{multline}
as well as 
\[
f_{a,b}'(w) \geq 0 \text{ almost surely. }
\]
Moreover, we have 
\begin{equation} \label{I1_plus_I2}
I_1 + I_2 = \bE[W_b f_{\Delta_1, \Delta_2} (W_b )] - \sum_{i=1}^n \bE[ \xi_{b, i}] \bE[f_{\Delta_1^{(i)}, \Delta_2^{(i)}} ( W_b^{(i)} ) )] 
\end{equation}
where 
\begin{align*}
I_1 &\equiv  \sum_{i=1}^n \mathbb{E}\left[ \xi_{b, i}\left( f_{\Delta_1, \Delta_2}(W_b ) - f_{\Delta_1, \Delta_2} (W_b^{(i)})      \right)\right] \text{ and }\\
I_2 &\equiv \sum_{i=1}^n \mathbb{E} \left[\xi_{b, i} \left(f_{\Delta_1, \Delta_2} ( W_b^{(i)} ) -f_{\Delta_1^{(i)}, \Delta_2^{(i)}} ( W_b^{(i)} ) \right)\right].
\end{align*}
Given the property in \eqref{fab_bdd}, we have 
 \begin{equation} \label{I2_upper_bdd}
 |I_2| \leq \sum_{i =1}^n \mathbb{E} \Big[ |\xi_{b,  i}|e^{\lambda W_b^{(i)} } \Big(|\Delta_1 - \Delta_1^{(i)}| + |\Delta_2 - \Delta_2^{(i)}|\Big)\Big].
 \end{equation}
 Now we estimate $I_1$, by first rewriting it as
 \begin{align*}
I_1&= \sum_{i=1}^n\bE \left[\xi_{b, i} \Big(f_{\Delta_1, \Delta_2}(W_b ) - f_{\Delta_1, \Delta_2}(W_b^{(i)} )  \Big) \right]\\ 
 &= \sum_{i=1}^n \bE\left[\xi_{b, i} \int^0_{-\xi_{b, i} } f_{\Delta_1, \Delta_2}'(W_b + t) dt \right] =  \sum_{i=1}^n \bE\left[   \int_{-\infty}^\infty  f_{\Delta_1, \Delta_2}'(W_b + t) \hat{K}_i(t) dt \right],
 \end{align*}
 where 
 \[
 \hat{K}_i(t) \equiv \xi_{b, i}\{I(- \xi_{b, i}  \leq t \leq 0) - I(0 < t \leq - \xi_{b, i} )\}.
 \]
Note that $\xi_{b, i}$ and $I(-\xi_{b, i}  \leq t \leq 0) - I(0 < t \leq - \xi_{b, i} )$ have the same sign, and it is also true that 
 $0 \leq \tilde{K}_i(t) \leq \hat{K}_i(t)$ where
 \[
 \tilde{K}_i(t) = \xi_{b, i} \{I(- \xi_{b, i}/2 \leq t \leq 0) - I(0 < t \leq - \xi_{b, i} /2)\}
 \]
By the fact that $f_{\Delta_1, \Delta_2}'(w) \geq  e^{\lambda w}\geq 0$ for all $w \in (\Delta_1 - \delta, \Delta_2 + \delta)$, one can  lower bound $I_1$ as
 \begin{align*}
 I_1 &\geq \sum_{i=1}^n \mathbb{E}\left[ \int_{-\infty}^\infty  f_{\Delta_1, \Delta_2}'(W_b  + t) \tilde{K}_i(t) dt \right]\\
 &\geq \sum_{i=1}^n \mathbb{E}\left[ \int_{|t|\leq \delta}I(\Delta_1 \leq W_b  \leq \Delta_2) f_{\Delta_1, \Delta_2}'(W_b + t) \tilde{K}_i(t) dt \right]\\
 &\geq \sum_{i=1}^n \bE\left[ I(\Delta_1 \leq W_b  \leq \Delta_2) e^{\lambda (W_b  - \delta)} |\xi_{b, i}| \min(\delta, |\xi_{b, i}|/2)\right]\\
 &= \bE \left[I(\Delta_1 \leq W_b  \leq \Delta_2) e^{\lambda (W_b  - \delta)} \left(\sum_{i=1}^n \eta_i\right)\right],
 \end{align*}
 where 
 \[
 \eta_i \equiv |\xi_{b, i}| \min(\delta, |\xi_i|/2),
 \]
  noting that given $\delta < 1/2$, $\min(\delta, |\xi_i|/2) = \min(\delta, |\xi_{b, i}|/2)$ due to the censoring definition of $\xi_{b, i}$. 
 Hence, continuing, 
 we can further lower bound $I_1$ as
 \begin{align}
 I_1 &\geq (c_2/2) \mathbb{E}\left[e^{\lambda (W_b  - \delta)} I(\Delta_1 \leq W_b\leq \Delta_2) I\left(\sum_{i=1}^n \eta_i \geq c_2/2\right)\right]\notag\\
 &\geq \frac{c_2}{2 e^{\lambda \delta}} \left \{  \mathbb{E}\left[ e^{\lambda W_b }  I(\Delta_1 \leq W_b  \leq \Delta_2) \right]- \mathbb{E}\left[e^{\lambda W_b } I\left(\sum_{i=1}^n \eta_i < c_2/2\right)\right]\right\}\notag\\
  &\geq \frac{c_2}{2 e^{\lambda \delta}}\left \{  \mathbb{E}\left[ e^{\lambda W_b }  I(\Delta_1 \leq W_b  \leq \Delta_2)\right] - \sqrt{\mathbb{E}\left[e^{2\lambda W_b }\right] P\left(\sum_{i=1}^n \eta_i < c_2/2\right)}\right\}\notag\\
  &\geq  \frac{c_2}{2 e^{\lambda \delta}}\left \{  \mathbb{E}[ e^{\lambda W_b  }  I(\Delta_1 \leq W_b  \leq \Delta_2)] -\left( \mathbb{E}\left[e^{2\lambda W_b}\right] \right)^{1/2}\exp\left( - \frac{c_2^2}{16 c_1 \delta^2}\right)\right\} \label{embeded},
 \end{align}
 where the last inequality comes from
the sub-Gaussian lower tail bound for sum of non-negative random variables \citep[Theorem 2.19]{victor2009self}, 
 \[
 P\left(\sum_{i=1}^n \eta_i < c_2/2\right) \leq \exp\left(- \frac{(c_2/2)^2}{2 \sum_{i=1}^n \mathbb{E}[\eta_i^2]}\right) \leq  \exp\left( - \frac{c_2^2}{8 c_1 \delta^2}\right).
 \]
 Clearly, since $|f_{\Delta_1, \Delta_2}(w)| \leq e^{\lambda w} (\Delta_2 - \Delta_1 + 2 \delta)$,
 we have, from \eqref{I1_plus_I2}, 
 \begin{equation} \label{I1_plus_I2_lower_bdd}
 I_1 + I_2 
  \leq \mathbb{E}[|W_b|e^{\lambda W_b }(|\Delta_2 - \Delta_1| + 2 \delta)] + 
    \sum_{i=1}^n \Big|\bE[ \xi_{b, i}]\Big| \bE[e^{\lambda W_b^{(i)} }(|\Delta_2^{(i)} - \Delta_1^{(i)}| + 2 \delta)]
 \end{equation}
 Combining \eqref{I2_upper_bdd}, \eqref{embeded} and \eqref{I1_plus_I2_lower_bdd}, the proof is done.  
 
 If  $\sum_{i=1}^n \bE[\xi_i^2] = 1$ and $\beta_2 + \beta_3 \leq 1/2$, one can apparently take $c_1 = 1$. The parameter choices of $c_2$ and $\delta$ in \eqref{RCI_suggested_params} can be justified as follows: Using the fact that \citep[p.259]{chen2011normal}
\[
\min(x, y) \geq y - \frac{y^2}{4 x}\text{ for } x > 0 \text{ and } y \geq 0,
\]
by taking $\delta = (\beta_2 + \beta_3)/4$, we have 
\begin{align*}
\sum_{i=1}^n \bE[|\xi_{b, i}| \min(\delta, |\xi_i|/2)] &\geq \sum_{i=1}^n \bE[ |\xi_i | I(|\xi_i| \leq 1)\min(\delta, |\xi_i|/2)]\\
&\geq \sum_{i=1}^n \Bigg[ \frac{\bE[\xi_i^2 I(|\xi_i| \leq 1)]}{2} - \frac{\bE[|\xi_i |^3 I(|\xi_i| \leq 1)]}{16 \delta} \Bigg] = \frac{1 - \beta_2}{2} - \frac{\beta_3}{16 \delta}\\
&=   \frac{1}{2} - \frac{ 8 \delta\beta_2+ \beta_3}{16 \delta} \underbrace{\geq}_{\delta \leq 1/8} \frac{1}{2} - \frac{\beta_2 + \beta_3}{16 \delta} =  \frac{1}{4}.
\end{align*}
\end{proof}

\newpage
\section{Proof of \thmref{main}} \label{app:main_pf}

This section presents the proof of \thmref{main}. The approach is similar to that of \citet[Theorem 3.1]{shao2016stein} but there are quite a number of differences stemming from correcting the numerous gaps in the latter. It suffices to consider $x \geq 0$, or else we can consider $- T_{SN}$ instead\footnote{For a given $x < 0$, if one can uniformly bound $|P(T_{SN} < x + \epsilon) - \Phi(x+\epsilon)|$ for all $\epsilon \in (x, 0)$,  one can then similarly bound $|P(T_{SN} \leq x) - \Phi(x)|$ by taking limits on both sides as $\epsilon \longrightarrow 0$.}. Moreover, without loss of generality, we can assume
 \begin{equation} \label{beta2_plus_beta3_leq_half}
 \beta_2 + \beta_3 < 1/2,
 \end{equation}
 otherwise it must be true that $|P(T_{SN} \leq x) - \Phi(x)| \leq 2 (\beta_2 + \beta_3)$. Since 
\[
1 + s/2 - s^2/2 \leq (1 + s)^{1/2} \leq 1 + s/2 \text{ for all } s  \geq -1,
\]
we have the two inclusions
\[
\{T_{SN} >x\} \subset \{W_n + D_{1n} - x D_{2n}/2 > x\} \cup \{  x + x(D_{2n} - D_{2n}^2)/2 < W_n + D_{1n} \leq x + x D_{2n}/2 \}
\]
and 
\[
\{T_{SN} > x\} \supset \{W_n + D_{1n} - x D_{2n}/2 > x\}.
\]
Hence, it suffices to establish the bounds 
\begin{multline}\label{part1}
P(x + x(D_{2n} - D_{2n}^2)/2 \leq W_n + D_{1n} \leq x + x D_{2n}/2 ) \leq \sum_{j=1}^2 P(|D_{jn}| > 1/2)\\
 + C\Bigg\{\beta_2 + \beta_3  + \bE\Big[(1 + e^{W_b}) \bar{D}_{2n}^2 \Big] 
+ \sum_{j=1}^2  \sum_{i=1}^n\|  \xi_{b, i}e^{W_b^{(i)}/2}  ( \bar{D}_{jn} - \bar{D}_{jn}^{(i)})   \|_1 
\Bigg\} 
\end{multline}
and
\begin{multline} \label{part2}
|P(W_n + D_{1n} - x D_{2n}/2 \leq x) - \Phi(x)| \leq  \sum_{j=1}^2P(|D_{jn}| > 1/2)  \\
+ C \Bigg\{\beta_2  + \beta_3  +  \|\bar{D}_{1n}\|_2 +  \bE\Big[(1 +e^{W_b}) \bar{D}_{2n}^2\Big] +  \Big|x \bE[\bar{D}_{2n} f_x(W_b)]\Big| \\
+  \sum_{j=1}^2
\sum_{i=1}^n \bigg( \bE[\xi^2_{b, i}] \Big\| (1+ e^{W_b^{(i)}})(\bar{D}_{jn} - \bar{D}_{jn}^{(i)} )\Big\|_1 +  \Big\|  \xi_{b, i}( 1+ e^{W_b^{(i)}/2})  ( \bar{D}_{jn} - \bar{D}_{jn}^{(i)})   \Big\|_1\bigg)
  \Bigg\}   
\end{multline}
separately. Before starting to prove them, we  introduce the following  notation:
\begin{equation*}\label{Delta1_and_Delta2_def}
\bar{\Delta}_{1n,x} =  \frac{x(\bar{D}_{2n} - \bar{D}_{2n}^2)}{2} - \bar{D}_{1n}  \text{ and }
\bar{\Delta}_{2n, x} = \frac{x\bar{D}_{2n}}{2} - \bar{D}_{1n}.
\end{equation*}

\subsection{Proof of \eqref{part1}} 
We further introduce
\[
\bar{\Delta}_{1n,x}^{(i)} =  \frac{x(\bar{D}_{2n}^{(i)} - (\bar{D}_{2n}^{(i)})^2)}{2} - \bar{D}_{1n}^{(i)} \text{ and }
\bar{\Delta}_{2n, x}^{(i)} = \frac{x\bar{D}_{2n}^{(i)}}{2} - \bar{D}_{1n}^{(i)}.
\]
%
Noting that 
\begin{equation} \label{split_out_P0}
P\Big(x + x(D_{2n} - D_{2n}^2)/2 \leq W_n + D_{1n} \leq x + x D_{2n}/2 \Big) \leq 
P_0 + \sum_{j=1}^2P(|D_{jn}|> 1/2) + \beta_2,
\end{equation}
where 
\[
P_0 = P(x + \bar{\Delta}_{1n,x}  \leq W_b \leq x+ \bar{\Delta}_{2n, x} ),
\]
it suffices to bound $P_0$.  Since $\bar{D}_{2n} - \bar{D}_{2n}^2 \geq - 3/4$ and hence $
\frac{1}{2} (x + \bar{\Delta}_{1n,x}) \geq \frac{1}{2}( \frac{5x}{8} - \frac{1}{2}) > \frac{x}{4} - \frac{1}{4}
$,  in light of \eqref{beta2_plus_beta3_leq_half}, applying  \lemref{modified_RCI_bdd} with the parameters in \eqref{RCI_suggested_params} and $\lambda = 1/2$
 implies that
\begin{align}
e^{x/4- 1/4} P_0 &\leq \bE[e^{W_b/2} I(x + \bar{\Delta}_{1n,x} \leq W_b \leq x + \bar{\Delta}_{2n,x})] \notag\\
&\leq 
\left( \mathbb{E}\left[e^{ W_b}\right] \right)^{1/2}\exp\left( - \frac{1}{16(\beta_2 + \beta_3)^2}\right) \notag\\
&+ 8 e^{(\beta_2 + \beta_3)/8}\Biggl\{ \sum_{i =1}^n \mathbb{E} \Big[ |\xi_{b,  i}|e^{W_b^{(i)} /2} \Big(|\bar{\Delta}_{1n,x}- \bar{\Delta}_{1n,x}^{(i)}| + |\bar{\Delta}_{2n,x}-\bar{\Delta}_{2n,x}^{(i)}|\Big)\Big] \notag\\
& + 
\mathbb{E}\left[|W_b|e^{W_b/2 }\left(|\bar{\Delta}_{2n,x}- \bar{\Delta}_{1n,x}| + \frac{\beta_2 + \beta_3}{2}\right) \right]\notag\\
& +   \sum_{i=1}^n  \Big|\bE[ \xi_{b, i}]\Big| \bE\left[e^{ W_b^{(i)} /2 }\left(|\bar{\Delta}_{2n,x}^{(i)} - \bar{\Delta}_{1n,x}^{(i)}| + \frac{\beta_2 + \beta_3}{2}\right)\right]\Biggr\} \label{1st_applied_RCI}
\end{align}
We will bound the different terms on the right hand side of \eqref{1st_applied_RCI}. First,
\begin{equation} \label{bennett_exp_ewb}
\bE[e^{W_b} ] \leq \exp( e^2/4 + 1/4) \text{ by \lemref{Bennett}}
\end{equation}
and
\begin{align}
\exp\left(\frac{-1}{16 (\beta_2 + \beta_3)^2} \right) 
&\leq C (\beta_2 + \beta_3).
\end{align}
Since $\bar{D}_{2n}^2 - (\bar{D}_{2n}^{(i)})^2 = (\bar{D}_{2n} - \bar{D}_{2n}^{(i)})(\bar{D}_{2n} + \bar{D}_{2n}^{(i)})$,
\begin{align}
& \mathbb{E} [ |\xi_{b,  i}|e^{W_b^{(i)} /2} (|\bar{\Delta}_{1n,x}- \bar{\Delta}_{1n,x}^{(i)}| + |\bar{\Delta}_{2n,x}-\bar{\Delta}_{2n,x}^{(i)}|)] \notag \\
&\leq C \mathbb{E} [  |\xi_{b,  i}|e^{W_b^{(i)} /2} (|\bar{D}_{1n} -\bar{D}_{1n}^{(i)} | + x|\bar{D}_{2n} -\bar{D}_{2n}^{(i)}| )  ] .\label{bdd_sth_else1}
\end{align}
 Moreover, since 
$
\frac{|W_b|}{2} \leq e^{|W_b|/2} \leq e^{W_b/2} + e^{-W_b/2}
$,
by \lemref{Bennett},
\begin{equation}
\mathbb{E}\left[|W_b|e^{W_b/2 }\left(|\bar{\Delta}_{2n,x}- \bar{\Delta}_{1n,x}| + \frac{\beta_2 + \beta_3}{2}\right) \right]
\leq C_1  x   \bE[(1 + e^{W_b}) \bar{D}_{2n}^2 ]+ C_2 (\beta_2 + \beta_3).  \label{bdd_sth_else2}
\end{equation}
Lastly, by \lemref{exp_xi_bi_bdd}, Bennett's inequality (\lemref{Bennett}) and \eqref{beta2_plus_beta3_leq_half}, we have
\begin{align}
 &\sum_{i=1}^n\Big|\bE[ \xi_{b, i}]\Big| \bE\left[e^{ W_b^{(i)} /2 }\left(|\bar{\Delta}_{2n,x}^{(i)} - \bar{\Delta}_{1n,x}^{(i)}| + \frac{\beta_2 + \beta_3}{2}\right)\right] \notag\\
 &\leq C \sum_{i=1}^n \Big|\bE[ \xi_{b, i}]\Big|  \underbrace{\Bigg( x\bE[ e^{W_b^{(i)}/2} (\bar{D}_{2n}^{(i)})^2   ] + \beta_2 + \beta_3\Bigg)}_{\leq \quad C(1 +x)} \leq C (1+x)\beta_2. \label{bdd_sth_else3}
\end{align}
Collecting   \eqref{split_out_P0}- \eqref{bdd_sth_else3}, we get \eqref{part1}.

\subsection{Proof of \eqref{part2}} 
For this part, as a proof device, we let
$X_1^*, \dots, X_n^*$ be independent copies of $X_1, \dots, X_n$  and in analogy to \eqref{D1D2_as_fn_of_data}, we introduce
\begin{multline*}
D_{1n, i^*}= D_{1n}(X_1, \dots, X_{i-1}, X_i^*, X_{i+1}, \dots, X_n) \text{ and }\\
 D_{2n, i^*} = D_{2n}(X_1, \dots, X_{i-1}, X_i^*, X_{i+1}, \dots, X_n), 
\end{multline*}
\begin{multline*}
\bar{D}_{1n, i^*} = D_{1n, i^*}I\Bigg(|D_{1n, i^*}| \leq \frac{1}{2} \Bigg) + \frac{1}{2}I\Bigg(D_{1n, i^*} > \frac{1}{2}\Bigg) - \frac{1}{2} I\Bigg(D_{1n, i^*} <- \frac{1}{2}\Bigg)\text{ and }\\
 \bar{D}_{2n, i^*} =  D_{2n, i^*} I\Bigg(| D_{2n, i^*} | \leq \frac{1}{2} \Bigg) + \frac{1}{2}I\Bigg( D_{2n, i^*}  > \frac{1}{2}\Bigg) - \frac{1}{2} I\Bigg( D_{2n, i^*}  <- \frac{1}{2}\Bigg), 
\end{multline*}
as well as
\[
\bar{\Delta}_{2n, x, i^*} = \frac{x \bar{D}_{2n, i^*} }{2} -  \bar{D}_{1n, i^*},
\]
which are correspondingly versions of $D_{1n}$, $D_{2n}$, $\bar{D}_{1n}$, $\bar{D}_{2n}$ and $\bar{\Delta}_{2n,x}$   with $X_i^*$ replacing $X_i$ as input. For any pair $1 \leq i , i' \leq n$ and $j \in \{1, 2\}$, we also define
\[
D_{jn, i^*}^{(i')} \equiv   \begin{cases} 
   D^{(i')} ( X_1, \dots, X_{i - 1}, X_i^*, X_{i+1}, \dots, X_{i'-1}, X_{i'+1}, \dots, X_n ) & \text{if } i < i' \\
  D^{(i')} ( X_1, \dots, X_{i'-1}, X_{i'+1},  \dots, X_{i - 1}, X_i^*, X_{i+1}, \dots,  X_n )      & \text{if } i > i' \\
    D^{(i')} ( X_1,  \dots, X_{i - 1},  X_{i+1}, \dots,  X_n )      & \text{if } i = i' \\
  \end{cases}, 
\]
i.e., $D_{jn, i^*}^{(i')}$ is a version of the ``leave-one-out" $D_{jn}^{(i')}$ with $X_i^*$ replacing $X_i$ as input, and its censored version
\[
\bar{D}_{jn, i^*}^{(i')}  \equiv D_{jn, i^*}^{(i')} I\Bigg(|D_{jn, i^*}^{(i')}| \leq \frac{1}{2} \Bigg) + \frac{1}{2}I\Bigg(D_{jn, i^*}^{(i')}> \frac{1}{2}\Bigg) - \frac{1}{2} I\Bigg(D_{jn, i^*}^{(i')} <- \frac{1}{2}\Bigg).
\]

It suffices to bound $|P(W_b -  \bar{\Delta}_{2n, x}  \leq x) - \Phi(x)|$ since 
\begin{equation} \label{part2_break_down}
|P(W_n -  \Delta_{2n, x}  \leq x) - \Phi(x)| \leq |P(W_b - \bar{\Delta}_{2n, x}  \leq x) - \Phi(x)| + 
\beta_2 + \sum_{j=1}^2P(|D_{jn}| > 1/2).
\end{equation}
First, define the \emph{K function}
\[
k_{b, i} (t) = \bE[\xi_{b, i}\{ I(0 \leq t \leq \xi_{b, i}) - I(\xi_{b, i} \leq t < 0)\}],
\]
which has the properties
\begin{multline} \label{kb_properties}
\int_{-\infty}^\infty k_{b, i}(t) dt =\int_{-1}^1 k_{b, i}(t) dt = \bE[\xi_{b, i}^2]  = \|\xi_{b, i}\|_2^2 \quad \text{ and }\\
\int_{-\infty}^\infty |t| k_{b, i}(t) dt = \int_{-1}^1 |t| k_{b, i}(t) dt = \frac{\bE[|\xi_{b, i}|^3]}{2} = \frac{\|\xi_{b, i}\|_3^3}{2}.
\end{multline}
Since
\[
\bE\Big[\int_{-1}^1 f_x'( W_b^{(i)} - \bar{\Delta}_{2n, x, i^*} + t) k_{b, i}(t) dt\Big] = \bE\Big[\xi_{b, i} \{f_x(W_b - \bar{\Delta}_{2n, x, i^*} ) - f_x(W_b^{(i)} - \bar{\Delta}_{2n, x, i^*}  ) \}\Big]
\]
by independence and the fundamental theorem of calculus, from the Stein equation \eqref{steineqt}, one can then write
\begin{align*}
&P(W_b -   \bar{\Delta}_{2n, x}  \leq x) - \Phi(x) \\
&= \bE[ f_x'(W_b -   \bar{\Delta}_{2n, x} )] - \bE[W_b f_x(W_b -   \bar{\Delta}_{2n, x})] \\
& \hspace{2cm}+ \bE\Big[ \bar{\Delta}_{2n, x}  \Big(f_x(W_b -  \bar{\Delta}_{2n, x}) - f_x(W_b)\Big)\Big] + \bE[\bar{\Delta}_{2n, x} f_x(W_b) ]\\
&= \underbrace{\sum_{i=1}^n \bE\Big[\int_{-1}^1 \{f_x'(W_b -  \bar{\Delta}_{2n, x}) - f_x'(W_b^{(i)} -  \bar{\Delta}_{2n, x, i^*}  + t)\} k_{b, i}(t) dt \Big] }_{R_1}\\
& + \underbrace{\sum_{i=1}^n \bE[ (\xi_i^2- 1)I(|\xi_i|> 1)]  \bE[f_x'(W_b -  \bar{\Delta}_{2n,x})] 
- \sum_{i=1}^n\bE[\xi_{b, i} f_x(W_b^{(i)} - \bar{\Delta}_{2n, x, i^*} )]
 + \bE[ \bar{\Delta}_{2n, x} f_x (W_b)] }_{R_2}\\
 &+ \underbrace{\Bigg\{ - \sum_{i=1}^n \bE\Bigg[\xi_{b, i}  \Big\{ f_x(W_b -  \bar{\Delta}_{2n, x}) - f_x(W_b - \bar{\Delta}_{2n, x, i^*} ) \Big\}\Bigg]\Bigg\}}_{R_3}\\
 &+  \underbrace{\bE\Big[  \barDelta_{2n, x} \int_0^{-  \bar{\Delta}_{2n, x}}f_x'(W_b + t) dt \Big]}_{R_4}\\
 &= R_1 + R_2 + R_3 + R_4.
\end{align*}
To finish the proof, we will establish the following  bounds for $R_1, R_2, R_3, R_4$:
\begin{multline} \label{R1_final_bdd}
|R_1| \leq \\
C \Bigg\{ \beta_2 + \beta_3 +
  \sum_{j=1}^2
\sum_{i=1}^n \bigg( \bE[\xi^2_{b, i}] \Big\| (1+ e^{W_b^{(i)}})(\bar{D}_{jn} - \bar{D}_{jn}^{(i)} )\Big\|_1 +  \Big\|  \xi_{b, i}e^{W_b^{(i)}/2}  ( \bar{D}_{jn} - \bar{D}_{jn}^{(i)})   \Big\|_1\bigg)  \Bigg\}
\end{multline}
\begin{equation}\label{R2_final_bdd}
|R_2| \leq 1.63\beta_2 +   0.63 \|\bar{D}_{1n}\|_2 + \Big|\frac{x}{2} \bE[\bar{D}_{2n} f_x(W_b)]\Big|
,
\end{equation}
\begin{equation} \label{R3_final_bdd}
|R_3|  \leq  C \sum_{j=1}^2 \sum_{i=1}^n \| \xi_{b, i} (1 + e^{W_b^{(i)}/2})(\bar{D}_{jn} - \bar{D}_{jn}^{(i)})\|_1,
\end{equation}
\begin{equation} \label{R4_final_bdd}
|R_4| \leq   C\Big( \|\bar{D}_{1n}\|_2 +  \bE[( 1+e^{W_b}) \bar{D}_{2n}^2]\Big).
\end{equation}
Then \eqref{R1_final_bdd} - \eqref{R4_final_bdd} together with \eqref{part2_break_down}   conclude \eqref{part2}.

\subsubsection{Bound for $R_1$} 
Let $g_x(w) = (w f_x(w))'$ as defined in \eqref{gw_def}. By the Stein equation \eqref{steineqt} and defining $\eta_1 =  t -  \bar{\Delta}_{2n, x, i^*}$  and $\eta_2 = \xi_{b, i} -  \bar{\Delta}_{2n, x}$, we can write
\[
R_1 = R_{11} + R_{12},
\]
where
\begin{align*}
R_{11}  
&=  \sum_{i=1}^n  \int_{-1}^1 \bE \Big[ \int_{t - \bar{\Delta}_{2n, x, i^*}}^{\xi_{b, i} -  \bar{\Delta}_{2n, x}} g_x(W_b^{(i)} + u) du\Big] k_{b, i} (t) dt\\
&= \underbrace{\sum_{i=1}^n \int_{-1}^1 \bE\Bigg[ \int g_x(W_b^{(i)} + u) I( \eta_1 \leq u \leq \eta_2) du\Bigg] k_{b, i} (t) dt}_{R_{11.1}} \\
&\hspace{1cm} - \underbrace{\sum_{i=1}^n \int_{-1}^1 \bE\Bigg[ \int g_x(W_b^{(i)} + u) I(\eta_2\leq u \leq \eta_1) du\Bigg] k_{b, i} (t) dt}_{R_{11.2}}
\end{align*}
and
\begin{align*}
R_{12}  &= \sum_{i=1}^n \int_{-1}^1   
\{P(W_b -  \bar{\Delta}_{2n, x} \leq x) - P(W_b^{(i)} - \bar{\Delta}_{2n, x, i^*} + t \leq x)\} k_{b, i}(t) dt.
\end{align*}
For $0 \leq x <1$, since $|g_x| \leq 2.3$ (\lemref{helping_unif}), using the properties in \eqref{kb_properties}, we have
\begin{align}
|R_{11}| &\leq  C \sum_{i=1}^n \int_{-1}^1 \Big(|t| + \|\xi_{b, i}\|_1 +\sum_{j=1}^2 \|\bar{D}_{jn} - \bar{D}_{jn, i^*}\|_1\Big) k_{b, i}(t) dt  \notag\\
&\leq C\left( \sum_{i=1}^n \|\xi_{b, i}\|_3^3 + \sum_{i=1}^n \|\xi_{b, i}\|_2^2 \|\xi_{b, i}\|_1 + \sum_{j=1}^2 \sum_{i=1}^n  \|\xi_{b, i}\|_2^2 \|\bar{D}_{jn} - \bar{D}_{jn, i^*}\|_1\right)\notag\\
&\leq C\left(\beta_2 + \beta_3 + \sum_{j=1}^2 \sum_{i=1}^n  \|\xi_{b, i}\|_2^2 \|\bar{D}_{jn} - \bar{D}_{jn, i^*}\|_1\right) \text{ for } 0 \leq x < 1\label{R_11_bdd_xleq1},
\end{align}
where we have used  $\|\xi_{b,i}\|_1 \leq \|\xi_{b,i}\|_2 \leq \|\xi_{b,i}\|_3$ and
\begin{align}
\|\xi_{b, i}\|_3^3 &=  \bE[|\xi_i|^3 I(|\xi_i| \leq 1)] +  \bE[I(|\xi_i|>1)] \notag\\
&\leq \bE[|\xi_i|^3 I(|\xi_i| \leq 1)]  + \bE[\xi_i^2 I(|\xi_i|>1)]  \label{split_out_beta2}
\end{align}
in the last inequality.

For $x \geq 1$, we first bound the integrand of $R_{11.1}$.
Using the identity
\begin{align*}
1 &= I(W_b^{(i)} + u \leq x - 1) + I(x -1 < W_b^{(i)} + u, u \leq 3x/4 ) + I(x -1 < W_b^{(i)} + u, u > 3x/4)\\
&\leq I(W_b^{(i)} + u \leq x - 1) + I(x -1 < W_b^{(i)} + u, W_b^{(i)} +1> x/4) + (x -1 < W_b^{(i)} + u, u > 3x/4)
\end{align*}
and the bounds for $g_x(\cdot)$ in \lemref{helping}, 
in light of $| \bar{\Delta}_{2n, x}| \leq \frac{x |\bar{D}_{2n}|}{2} + |\bar{D}_{1n}| \leq \frac{1}{2} + \frac{x}{4}$ and
$1.6 \barPhi(x) \leq x e^{1/2-x}$,
\begin{align*}
&\Bigg|\bE\Big[\int g_x(W_b^{(i)} + u) I(\eta_1 \leq  u \leq \eta_2) du\Big]\Bigg|\\
&\leq  x e^{1/2 - x} \|\eta_2 - \eta_1\|_1 + (x + 2) \Big\{ \| I(W_b^{(i)} +1 > x/4) (\eta_2 - \eta_1) \|_1 +  \| I(\eta_2 > 3x/4)  (\eta_2 - \eta_1)\|_1 \Big\}\\
&\leq  x e^{1/2 - x} \|\eta_2 - \eta_1\|_1 + \frac{x+2}{e^{x/4 -1}} \|e^{W_b^{(i)}}(\eta_2 - \eta_1)\|_1 +
\frac{x+2}{e^{3x/4}} \|e^{\xi_{b, i} - \barDelta_{2n, x}} (\eta_2  - \eta_1) \|_1\\
&\leq  \Bigg(x e^{1/2 - x} + \frac{e^{3/2} (x+2)}{e^{x/2}} \Bigg)\|\eta_2 - \eta_1\|_1 + \frac{x+2}{e^{x/4 -1}} \|e^{W_b^{(i)}}(\eta_2 - \eta_1)\|_1 \\
&\leq \frac{C(x+2)}{e^{x/4}}\Bigg\{ |t| + \|\barDelta_{2n, x, i^*} - \barDelta_{2n, x} + \xi_{b, i}\|_1 +  \| e^{W_b^{(i)}}  (\barDelta_{2n, x, i^*} - \barDelta_{2n, x} + \xi_{b, i})\|_1 \Bigg\}\\
\end{align*}
where we have used the Bennett's inequality (\lemref{Bennett}) via $\|e^{W_b^{(i)}} t\|_1 \leq C |t|$. Continuing, 
\begin{align}
&\Bigg|\bE\Big[\int g_x(W_b^{(i)} + u) I(\eta_1 \leq  u \leq \eta_2) du\Big]\Bigg|\notag\\
&\leq \frac{C(x+2)}{e^{x/4}}\Bigg\{ |t| + \| x (\bar{D}_{2n, i^*}- \bar{D}_{2n}) - (\bar{D}_{1n, i^*}- \bar{D}_{1n}) + \xi_{b, i}\|_1\notag\\
&\hspace{4cm} +  \| e^{W_b^{(i)}}  [ x (\bar{D}_{2n, i^*}- \bar{D}_{2n}) - (\bar{D}_{1n, i^*}- \bar{D}_{1n})+ \xi_{b, i}]\|_1 \Bigg\}\notag\\
&\leq C \Bigg\{|t| +  (1 + \|e^{W_b^{(i)}}\|_2)\|\xi_{b, i}\|_2   +\sum_{j=1}^2 \| (1 + e^{W_b^{(i)}}) (\bar{D}_{jn, i^*}- \bar{D}_{jn})\|_1 \Bigg\} \notag\\
& \leq C \Bigg\{|t| + \|\xi_{b, i}\|_2   +\sum_{j=1}^2 \| (1 + e^{W_b^{(i)}}) (\bar{D}_{jn, i^*}- \bar{D}_{jn})\|_1 \Bigg\}\label{integrand_R11_1_bdd},
\end{align}
where the last inequality uses Bennett's inequality  (\lemref{Bennett} giving $\|e^{W_b^{(i)}}\|_2  \leq C$). 
By a completely analogous argument, we also have the bound
\begin{equation}
\Bigg|\bE\Big[\int g_x(W_b^{(i)} + u) I(\eta_2 \leq  u \leq \eta_1) du\Big]\Bigg|
 \leq C \Bigg\{|t| + \|\xi_{b, i}\|_2   +\sum_{j=1}^2 \| (1 + e^{W_b^{(i)}}) (\bar{D}_{jn, i^*}- \bar{D}_{jn})\|_1 \Bigg\} \label{integrand_R11_2_bdd}.
\end{equation}
for the integrand of $R_{11.2}$, for $x \geq 1$. 
Combining \eqref{integrand_R11_1_bdd} and \eqref{integrand_R11_2_bdd}, as well as the integral and moment properties in \eqref{kb_properties} and \eqref{split_out_beta2},  via integrating over $t$, we have
\begin{align}
|R_{11}| &\leq C  \Bigg\{\beta_2 + \beta_3 +  \sum_{i=1}^n \|\xi_{b, i}\|_2^2 \Bigg( \|\xi_{b, i}\|_2   +\sum_{j=1}^2 \| (1 + e^{W_b^{(i)}}) (\bar{D}_{jn, i^*}- \bar{D}_{jn})\|_1 \Bigg)  \Bigg\} \notag \\  
&\leq C \Bigg\{\beta_2 + \beta_3 +   \sum_{j=1}^2\sum_{i=1}^n \|\xi_{b, i}\|_2^2 \Big\| (1 + e^{W_b^{(i)}}) (\bar{D}_{jn, i^*}- \bar{D}_{jn})\Big\|_1\Bigg\} \text{ for } x \geq 1 \label{R_11_bdd_xgeq1}, 
\end{align}
where the last inequality also uses  $\|\xi_{b, i}\|_2^3 \leq \|\xi_{b, i}\|^3_3$ and  \eqref{split_out_beta2}. Combining \eqref{R_11_bdd_xgeq1} with the bound for $x \in [0, 1)$ in \eqref{R_11_bdd_xleq1}, we get, for all $x \geq 0$,
\begin{align}
|R_{11}| &\leq  C \Bigg\{\beta_2 + \beta_3 +   \sum_{j=1}^2\sum_{i=1}^n \bE[\xi_{b, i}^2] \big\| (1 + e^{W_b^{(i)}}) (\bar{D}_{jn} - \bar{D}_{jn, i^*} )\big\|_1\Bigg\}  \notag\\
&=  C \Bigg\{\beta_2 + \beta_3 +   \sum_{j=1}^2\sum_{i=1}^n \bE[\xi_{b, i}^2] \big\| (1 + e^{W_b^{(i)}}) (\bar{D}_{jn} - \bar{D}_{jn}^{(i)} +  \bar{D}_{jn}^{(i)} -  \bar{D}_{jn, i^*} )\big\|_1\Bigg\} \notag \\
&\leq  C \Bigg\{\beta_2 + \beta_3 +   \sum_{j=1}^2\sum_{i=1}^n \bE[\xi_{b, i}^2] \big\| (1 + e^{W_b^{(i)}}) (\bar{D}_{jn} - \bar{D}_{jn}^{(i)}  )\big\|_1\Bigg\} \label{R_11_bdd}
\end{align}
where in the last inequality, we have used the fact that $(W_b^{(i)}, \bar{D}_{jn} - \bar{D}_{jn}^{(i)}) =_d (W_b^{(i)}, \bar{D}_{jn, i^*} - \bar{D}_{jn}^{(i)})$ .

For $R_{12}$, its integrand for a given $i$ is bounded by
\begin{multline} \label{two_probs}
 P(x + \barDelta_{2n, x} \leq W_b \leq x - t + \barDelta_{2n, x, i^*} + \xi_{b, i})+ P( x - t+ \barDelta_{2n, x, i^*} +\xi_{b, i} \leq  W_b \leq  x + \barDelta_{2n, x})
\end{multline}
Since
\[
(x + \barDelta_{2n, x}) \wedge (x - t + \barDelta_{2n, x, i^*}+\xi_{b, i}) \geq  (3x)/4- 5/2  \quad \text{ for } \quad  |t| \leq 1,
\]
and $\bE[e^{W_b}] \leq C$ by Bennett's inequality (\lemref{Bennett}),
by defining  
\[
\bar{\Delta}_{2n, x, i^*}^{(i')} \equiv  \frac{x \bar{D}_{2n, i^*}^{(i')} }{2} -  \bar{D}_{1n, i^*}^{(i')} \text{ for }
1 \leq i' \leq n,
\]
 we can apply the randomized concentration inequality (\lemref{modified_RCI_bdd})  with the parameters in \eqref{RCI_suggested_params} and $\lambda = 1/2$  to bound \eqref{two_probs} by
\begin{align}
&C e^{-3x/8} \Bigg\{\beta_2 + \beta_3 \notag\\
& \hspace{1cm} + \sum_{i' = 1}^n \bE\Big[ |\xi_{b, i'}|e^{W_b^{(i')}/2} \Big( |\barDelta_{2n, x} - \barDelta_{2n, x}^{(i')}| +|\barDelta_{2n, x, i^*} - \barDelta_{2n, x, i^*}^{(i')}|  + I(i' = i)|\xi_{b, i}| \Big)\Big] \notag\\
& \hspace{1cm}+ \bE\Big[ \underbrace{ |W_b|e^{W_b/2} }_{\leq 2 (1 + e^{W_b})}\Big( |\barDelta_{2n, x} - \barDelta_{2n, x, i^*}| + |\xi_{b, i}| + |t| + \beta_2 + \beta_3 \Big)\Big] \notag\\
& \hspace{1cm}+  \sum_{i'=1}^n  \Big|\bE[ \xi_{b, i'}]\Big| \bE\Big[ e^{W_b^{(i')}/2} \underbrace{\Big(|t|  +|\xi_{b, i}|I(i' \neq i) + |\barDelta_{2n, x}^{(i')} - \barDelta_{2n, x, i^*}^{(i')}| + \beta_2 + \beta_3\Big)}_{\leq C(1 +x)} \Big] 
\Bigg\}\notag\\
&\leq C  \Bigg\{\beta_2 + \beta_3 +  \bE[|\xi_{b, i}|^2 e^{ W_b^{(i)}/2}] \notag\\
&\hspace{0.1cm}+ \sum_{j=1}^2 \sum_{i' = 1}^n \bE\bigg[ |\xi_{b, i'}|e^{W_b^{(i')}/2} \Big( | \bar{D}_{jn} - \bar{D}_{jn}^{(i')}| +|\bar{D}_{jn, i^*} - \bar{D}_{jn, i^*}^{(i')}|   \Big)\bigg] \notag\\
& \hspace{0.1cm}+ \bE\Big[ (1 + e^{W_b})\Big( \sum_{j=1}^2 | \bar{D}_{jn} -\bar{D}_{jn, i^*}| + |\xi_{b, i}| + |t| + \beta_2 + \beta_3 \Big)\Big] +  \sum_{i' = 1}^n  \Big|\bE[ \xi_{b, i'}]\Big| \bE\Big[ e^{W_b^{(i')}/2} \Big] 
\Bigg\}\notag\\
&\leq C \Bigg\{\beta_2 + \beta_3  + \bE[|\xi_{b, i}|^2] + \sum_{j=1}^2 \sum_{i' = 1}^n  \bE\bigg[ |\xi_{b, i'}|e^{W_b^{( i')}/2} \Big( | \bar{D}_{jn} - \bar{D}_{jn}^{(i')}| +|\bar{D}_{jn, i^*} - \bar{D}_{jn, i^*}^{(i')}|   \Big)\bigg]\notag\\
& \hspace{5cm} +  \sum_{j=1}^2 \| (1+ e^{W_b})(\bar{D}_{jn} - \bar{D}_{jn, i^*})\|_1 + \|\xi_{b, i}\|_2 + |t| 
\Bigg\}\label{bdd_integrand_R12};
\end{align}
 in  \eqref{bdd_integrand_R12},  we have used that $\sum_{i' =1}^n |\bE[ \xi_{b, i'}]| \leq \beta_2$ by \lemref{exp_xi_bi_bdd} and 
\[
\max( \|e^{W_b}\|_2,  \|e^{W_b}\|_1, \bE[e^{W_b^{(i')}/2}], \bE[e^{W_b^{(i)}/2}] )\leq C
\]
by Bennett's inequality (\lemref{Bennett}). Since \eqref{bdd_integrand_R12} bounds \eqref{two_probs} which bounds the integrand of $R_{12}$, on integration with respect to $t$ which has the properties in \eqref{kb_properties},  we get
\begin{multline} \label{R12_prefinal_bdd}
|R_{12}| \leq C \Bigg\{ \beta_2 + \beta_3 +
  \sum_{j=1}^2
\Bigg[
\sum_{i=1}^n\bE[\xi_{b, i}^2] \Big\| (1+ e^{W_b^{(i)}})(\bar{D}_{jn} - \bar{D}_{jn, i^*})\Big\|_1 +  \\
\sum_{i=1}^n \bE[\xi_{b, i}^2]\sum_{ i' = 1}^n  \bE\Big[ |\xi_{b, i'}|e^{W_b^{( i')}/2} \big( | \bar{D}_{jn} - \bar{D}_{jn}^{(i')}| +|\bar{D}_{jn, i^*} - \bar{D}_{jn, i^*}^{(i')}|   \big)\Big]
\Bigg]
\Bigg\}
\end{multline}
where we have used $\sum_{i=1}^n \|\xi_{b, i}\|_2^4 \leq \sum_{i=1}^n \|\xi_{b, i}\|_2 \|\xi_{b, i}\|_2^2 \leq \sum_{i=1}^n \bE[|\xi_{b, i}|^3] \leq \beta_2 + \beta_3$ by \eqref{split_out_beta2}. From  \eqref{R12_prefinal_bdd}, by defining 
\[
W_b^{(i, i')}   \equiv \begin{cases}
 W_b - \xi_{b, i} - \xi_{b, i'}  &  \text{ if } i' \neq i \\
 W_b - \xi_{b, i}  &  \text{ if } i' = i
\end{cases},
\]
 with $e^{W_b^{(i')}/2} \leq e^{1/2} e^{W_b^{(i, i')}/2} $, we further get
\begin{align} 
|R_{12}| &\leq C \Bigg\{ \beta_2 + \beta_3 +
  \sum_{j=1}^2
\Bigg[
\sum_{i=1}^n \bE[\xi_{b, i}^2] \| (1+ e^{W_b^{(i)}})(\bar{D}_{jn} - \bar{D}_{jn}^{(i)} + \bar{D}_{jn}^{(i)}      - \bar{D}_{jn, i^*})\|_1 +  \notag \\
& \sum_{i=1}^n \bE[\xi_{b, i}^2]\sum_{i'  = 1}^n  \bE\Big[ |\xi_{b, i'}|e^{W_b^{(i, i')}/2} \big( | \bar{D}_{jn} - \bar{D}_{jn}^{(i')}| +|\bar{D}_{jn, i^*} - \bar{D}_{jn, i^*}^{(i')}|   \big)\Big]\Bigg]
\Bigg\} \notag\\
&\leq  C \Bigg\{ \beta_2 + \beta_3 +
  \sum_{j=1}^2
\Bigg[
\sum_{i=1}^n  \bE[\xi_{b, i}^2] \| (1+ e^{W_b^{(i)}})(\bar{D}_{jn} - \bar{D}_{jn}^{(i)} )\|_1 +  \notag \\
& \hspace{2cm}\sum_{i=1}^n  \bE[\xi_{b, i}^2] \sum_{ i' = 1}^n  \bE\Big[ |\xi_{b, i'}|e^{W_b^{(i, i')}/2}  | \bar{D}_{jn} - \bar{D}_{jn}^{(i')}|    \Big]\Bigg]
\Bigg\}, \label{R12_final_bdd2}
\end{align}
where we have used that 
\begin{multline*}
(e^{W_b^{(i)}}, \bar{D}_{jn} - \bar{D}_{jn}^{(i)})=_d (e^{W_b^{(i)}}, \bar{D}_{jn, i^*} - \bar{D}_{jn}^{(i)}) \text{ and }\\
(|\xi_{b, i'}|e^{W_b^{(i, i')}/2},   \bar{D}_{jn} - \bar{D}_{jn}^{(i')}) =_d (|\xi_{b, i'}|e^{W_b^{(i, i')}/2},   \bar{D}_{jn, i^*} - \bar{D}_{jn, i^*}^{(i')})
\end{multline*}
to arrive at \eqref{R12_final_bdd2}.
Lastly,  \eqref{R12_final_bdd2} can be further simplified as
\begin{multline} \label{R12_final_bdd}
|R_{12}|\leq \\
C \Bigg\{ \beta_2 + \beta_3 +
  \sum_{j=1}^2
\sum_{i=1}^n \bigg( \bE[\xi^2_{b, i}] \Big\| (1+ e^{W_b^{(i)}})(\bar{D}_{jn} - \bar{D}_{jn}^{(i)} )\Big\|_1 +  \bE\Big[ |\xi_{b, i}|e^{W_b^{(i)}/2}  | \bar{D}_{jn} - \bar{D}_{jn}^{(i)}|    \Big]\bigg)  \Bigg\}
\end{multline}
 using $e^{W_b^{(i, i')}/2} \leq e^{(W_b^{(i')} +1)/2}$ and $\sum_{i=1}^n \bE[\xi_{b, i}^2] \leq \sum_{i=1}^n \bE[\xi_i^2] = 1$ by \eqref{assumptions}. Combining \eqref{R_11_bdd} and \eqref{R12_final_bdd} gives  \eqref{R1_final_bdd}.

\subsubsection{Bound for $R_2$} 
Since $|f_x'| \leq 1$ by \lemref{helping_unif}, 
\begin{equation} \label{R2_first_component}
|\sum_{i=1}^n \bE[ (\xi_i^2- 1)I(|\xi_i|> 1)]  \bE[f_x'(W_b -  \bar{\Delta}_{2n,x})]  |\leq \sum_{i=1}^n \bE[\xi_i^2 I(|\xi| >1)] \leq \beta_2.
\end{equation}
Moreover, by independence, \lemref{exp_xi_bi_bdd} and that $|f_x| \leq 0.63$ from  \lemref{helping_unif},
\begin{multline*}
 \bigg|\sum_{i=1}^n\bE[\xi_{b, i} f(W_b^{(i)} -  \bar{\Delta}_{2n, x, i^*})] \bigg|= \bigg| \sum_{i=1}^n\bE[\xi_{b, i}]\bE[ f(W_b^{(i)} -  \bar{\Delta}_{2n, x, i^*})] \bigg|\\
   \leq 0.63 \sum_{i=1}
^n |\bE[\xi_{b, i} ]| \leq 0.63 \sum_{i=1}^n \bE[\xi_i^2 I(|\xi_i| >1)] = 0.63  \beta_2.
\end{multline*}
Lastly, by $|f_x| \leq 0.63$ and the definition of $\bar{\Delta}_{2n, x}$,
\[
|\bE[ \bar{\Delta}_{2n, x} f_x (W_b)] | \leq 0.63 \|\bar{D}_{1n}\|_2 + \Big|\frac{x}{2} \bE[\bar{D}_{2n} f_x(W_b)]\Big|
\]
Hence we established \eqref{R2_final_bdd}.

\subsubsection{Bound for $R_3$} By mean value theorem, given $|f'_x| \leq 1$ (\lemref{helping_unif}), \begin{align*}
|f_x(W_b -  \bar{\Delta}_{2n, x}) - f_x(W_b -  \bar{\Delta}_{2n, x, i^*})| &\leq C | \bar{\Delta}_{2n, x} -  \bar{\Delta}_{2n, x, i^*}| \\ 
&\leq C (|\bar{D}_{1n} - \bar{D}_{1n, i^*}| + x|\bar{D}_{2n} - \bar{D}_{2n, i^*}|).
\end{align*}
Hence 
\begin{multline} \label{R3_bdd_for_x_leq_1}
|R_3|\leq C \sum_{j=1}^2 \sum_{i=1}^n \|\xi_{b, i} (\bar{D}_{jn} - \bar{D}_{jn, i^*})\|_1 \\
= C \sum_{j=1}^2 \sum_{i=1}^n \|\xi_{b, i} (\bar{D}_{jn} - \bar{D}_{jn}^{(i)} + \bar{D}_{jn}^{(i)}   -  \bar{D}_{jn, i^*})\|_1 \text{ for }0 \leq x \leq 1.
\end{multline}
For $ x >1$, given $|\bar{\Delta}_{2n, x}|\vee |\bar{\Delta}_{2n, x, i^*}| \leq \frac{1}{2} + \frac{x}{4}$, by \eqref{fx'bdd} in \lemref{helping} and $|f'_x| \leq 1$ (\lemref{helping_unif}),
\begin{align*}
&|f_x(W_b -  \bar{\Delta}_{2n, x}) - f_x(W_b -  \bar{\Delta}_{2n, x, i^*})| \\
&\leq  |f_x(W_b -  \bar{\Delta}_{2n, x}) - f_x(W_b -  \bar{\Delta}_{2n, x, i^*})| \Big[ I(W_b \leq 3x/4 - 3/2) + I(W_b > 3x/4 - 3/2)\Big]\\
&\leq C \Big(e^{1/2-x} +I(W_b > 3x/4 - 3/2) \Big)\Big(|\bar{D}_{1n} - \bar{D}_{1n, i^*}| + x|\bar{D}_{2n} - \bar{D}_{2n, i^*}|\Big)\\
&\leq C\Big(e^{-x} + e^{-3x/8} e^{W_b/2} \Big) \Big(|\bar{D}_{1n} - \bar{D}_{1n, i^*}| + x|\bar{D}_{2n} - \bar{D}_{2n, i^*}|\Big) \\
&\leq  C\Big(e^{-x} + e^{-3x/8} e^{W_b^{(i)}/2} \Big) \Big(|\bar{D}_{1n} - \bar{D}_{1n, i^*}| + x|\bar{D}_{2n} - \bar{D}_{2n, i^*}|\Big),
\end{align*}
where we have used $e^{W_b/2} \leq e^{1/2} e^{W_b^{(i)}/2}$ in the last inequality. Hence, 
\begin{equation}   \label{R3_bdd_for_x_geq_1}
|R_3|  \leq  C \sum_{j=1}^2 \sum_{i=1}^n \| \xi_{b, i} (1 + e^{W_b^{(i)}/2})(\bar{D}_{jn} - \bar{D}_{jn}^{(i)} + \bar{D}_{jn}^{(i)}- \bar{D}_{jn, i^*})\|_1 \text{ for } x >1
\end{equation}
Because $(\xi_{b, i}, W_b^{(i)}, \bar{D}_{jn} - \bar{D}_{jn}^{(i)}) =_d (\xi_{b, i}, W_b^{(i)}, \bar{D}_{jn, i^*} - \bar{D}_{jn}^{(i)})$, \eqref{R3_bdd_for_x_leq_1} and \eqref{R3_bdd_for_x_geq_1} establishe \eqref{R3_final_bdd}.

\subsubsection{Bound for $R_4$}  Using that $|f_x'|\leq 1$ in \lemref{helping_unif}, for $0 \leq x \leq 1$,
\[
\bE\Big[  \barDelta_{2n, x} \int_0^{-  \bar{\Delta}_{2n, x}}f_x'(W_b + t) dt \Big] \leq C \barDelta_{2n, x}^2
\leq C(\|\bar{D}_{1n}\|_2^2  + \|\bar{D}_{2n}\|_2^2) \leq C(\|\bar{D}_{1n}\|_2 +  \|\bar{D}_{2n}\|_2^2).
\]
For $x > 1$, using \eqref{fx'bdd} in \lemref{helping} and that $|f_x'| \leq 1$ in \lemref{helping_unif}, given  $|\bar{\Delta}_{2n, x}| \leq \frac{1}{2} + \frac{x}{4}$
\begin{align*}
&\bE\Big[  \barDelta_{2n, x} \int_0^{-  \bar{\Delta}_{2n, x}}f_x'(W_b + t) dt \Big]\\
&\leq e^{1/2-x} \bE[\barDelta_{2n, x}^2] + \bE[I(W_b \geq  3x/4 - 3/2) \barDelta_{2n, x}^2]\\
&\leq C( e^{-x} \bE[\barDelta_{2n, x}^2] + e^{-3x/4} \bE[ e^{W_b} \barDelta_{2n, x}^2  ] )\\
&\leq C \Bigg\{2 e^{-x} \bigg(\|\bar{D}_{1n}\|_2^2 + \frac{x^2}{4}\|\bar{D}_{2n}\|_2^2\Bigg) + 2 e^{-3x/4} \bE\Bigg[e^{W_b} \Bigg(\bar{D}_{1n}^2 + \frac{x^2}{4}\bar{D}_{2n}^2\Bigg)\Bigg] \Bigg\}\\
&\leq C( \|\bar{D}_{1n}\|_2 + \bE[(1 +e^{W_b} )\bar{D}_{2n}^2]),
\end{align*}
where we have used $\bE[e^{W_b}|\bar{D}_{1n}|^2]\leq \bE[e^{W_b}|\bar{D}_{1n}|] \leq \|e^{W_b}\|_2 \|\bar{D}_{1n}\|_2 \leq C \|\bar{D}_{1n}\|_2$ by \lemref{Bennett} and $\|\bar{D}_{1n}\|_2^2 \leq \|\bar{D}_{1n}\|_2$. This establishes \eqref{R4_final_bdd}.

\section{Proof of \thmref{main2}} \label{app:main2_pf}

We first verify \eqref{D1_first_error_bdd}-\eqref{D1_third_error_bdd}, which will also be used in the proof of \thmref{main2}; \eqref{D1_third_error_bdd} is immediate from  \eqref{easy_easy_bdd}. We can prove  \eqref{D1_first_error_bdd} with H\"older's inequality as
\begin{align*}
 \| (1+ e^{W_b^{(i)}})(\bar{D}_{1n} - \bar{D}_{1n}^{(i)} )\|_1  
 &\leq   \| 1+ e^{W_b^{(i)}}\|_2 \|\bar{D}_{1n} - \bar{D}_{1n}^{(i)} \|_2  \notag\\
 &\leq  \Big(1 +  \exp( e^{4}/8 - 1/8 + 1/2) \Big)\Big\|D_{1n} - D_{1n}^{(i)} \Big\|_2,
\end{align*}
where we have also used Bennett's inequality (\lemref{Bennett}) and \eqref{easy_bdd} at the end. Similarly, \eqref{D1_second_error_bdd} can be proved as 
\begin{align*}
\|  \xi_{b, i} ( 1+e^{W_b^{(i)}/2})  ( \bar{D}_{1n} - \bar{D}_{1n}^{(i)})   \|_1
 &\leq   \| \xi_{b, i}( 1+e^{W_b^{(i)}/2}) \|_2 \|\bar{D}_{1n} - \bar{D}_{1n}^{(i)} \|_2  \notag\\
 &=  \| \xi_{b, i} \|_2 \|1+e^{W_b^{(i)}/2} \|_2 \|\bar{D}_{1n} - \bar{D}_{1n}^{(i)} \|_2 \notag\\
 &\leq   \Big( 1+\exp( e^2/8 - 1/8 + 1/4)\Big)  \| \xi_i \|_2 \Big\|D_{1n} - D_{1n}^{(i)} \Big\|_2,
\end{align*}
where we have also used the independence of $e^{W_b^{(i)}}$ and $ \xi_{b, i} $.

Our next task is to bound the other terms in the general bound of \thmref{main}.
Let
 \[
 \bar{\Pi}_k = \Pi_ k I(|\Pi_k| \leq 1) + I(\Pi_k >1) - I(\Pi_k <-1)\text{ for } k =1, 2.
 \]
  Since $|D_{2n}| \leq |\Pi_1| +  |\Pi_2|$, and  $|\bar{D}_{2n}|$ is precisely $|D_{2n}|$ as a non-negative random variable upper-censored at $1/2$, it must be that
$
|\bar{D}_{2n}|\leq |\bar{\Pi}_1| + |\bar{\Pi}_2|
$,
which further implies
\begin{equation} \label{D2n_pi12_bdd}
\bar{D}_{2n}^2\leq 2 (   \bar{\Pi}_1^2 +  \bar{\Pi}_2^2).
\end{equation}
From \eqref{D2n_pi12_bdd} and $\bar{\Pi}_2^2 \leq |\bar{\Pi}_2|$, we can get
\begin{equation}\label{bdd_on_D2n2_in_Pi2}
\bE[\bar{D}_{2n}^2] \leq 2(\|\Pi_1\|_2^2 + \|\Pi_2\|_2  )
\end{equation}
On the other hand, define
\[
D_{2n}^{(i)} =  \max \Bigg(-1,  \quad \sum_{ 1 \leq i' \leq n, i' \neq i} (\xi_{b, i'}^2 - \bE[\xi_{b, i'}^2])+  \Pi_2^{(i)}\Bigg). 
\]
By \propertyref{censoring_property}$(i)$, one can then write
\begin{align}
 \| (1+ e^{W_b^{(i)}})(\bar{D}_{2n} - \bar{D}_{2n}^{(i)} )\|_1  
 &\leq    \| (1+ e^{W_b^{(i)}} )(\xi_{b, i}^2 - \bE[\xi_{b, i}^2]) \|_1  +  \| (1+ e^{W_b^{(i)}} )(\Pi_2 -\Pi_2^{(i)}) \|_1 \notag\\
 &\leq  \| 1+ e^{W_b^{(i)}} \|_3\|\xi_{b, i}^2 - \bE[\xi_{b, i}^2] \|_{3/2}  +  \| 1+ e^{W_b^{(i)}} \|_2 \|\Pi_2 -\Pi_2^{(i)} \|_2 \notag \\
 &\leq C\Big( (\bE[|\xi_i|^3])^{2/3} +   \|\Pi_2 -\Pi_2^{(i)} \|_2\Big) \label{D2_first_error_bdd}
\end{align}
and
\begin{align}
& \|  \xi_{b, i}( 1+ e^{W_b^{(i)}/2}) (\bar{D}_{2n} - \bar{D}_{2n}^{(i)} )\|_1  \notag\\
 &\leq    \|  \xi_{b, i}( 1+e^{W_b^{(i)}/2} )(\xi_{b, i}^2 - \bE[\xi_{b, i}^2]) \|_1  +  \| \xi_{b, i}( 1+e^{W_b^{(i)}/2}) (\Pi_2 -\Pi_2^{(i)}) \|_1 \notag\\
 &\leq  \| \xi_{b, i}\|_3  \| 1+e^{W_b^{(i)}}\|_3 \|\xi_{b, i}^2 - \bE[\xi_{b, i}^2] \|_{3/2}  +  \| \xi_{b, i} \|_2  \| 1+e^{W_b^{(i)}}\|_2 \|\Pi_2 -\Pi_2^{(i)} \|_2 \notag\\
 &\leq C\Big( \bE[|\xi_i|^3] +   \| \xi_i \|_2  \|\Pi_2 -\Pi_2^{(i)} \|_2\Big), \label{D2_second_error_bdd}
\end{align}
where we have applied Bennett's inequality (\lemref{Bennett}) to both \eqref{D2_first_error_bdd} and \eqref{D2_second_error_bdd} at the end. 
 To complete the proof, 
 it suffices to show the bounds 
 \begin{equation} \label{bound_on_ewD2_sq_simplified}
\bE[e^{W_b} \bar{D}_{2n}^2] \leq C  \Bigg\{\sum_{i=1}^n  \|\xi_{b, i}\|_3^3 +   \|\Pi_2\|_2  \Bigg\}
\end{equation}
and
\begin{equation} \label{bound_on_x_D2_fx_simplified}
\sup_{x \geq 0} |x \bE[\bar{D}_{2n} f_x(W_b)]| \leq C \Big( \|\Pi_1\|_2^2 +  \sum_{i=1}^n  \|\xi_{b, i}\|_3^3 + \|\Pi_2\|_2 \Big),
\end{equation}
because \thmref{main2} is then just a corollary of \thmref{main} by collecting \eqref{D1_first_error_bdd}-\eqref{D1_third_error_bdd}, \eqref{bdd_on_D2n2_in_Pi2}-\eqref{bound_on_x_D2_fx_simplified}, as well as the simple facts
\begin{multline*}
\beta_2 + \beta_3 \leq \sum_{i=1}^n \bE[|\xi_i|^3], \qquad
  \bE[|\xi_{b, i}|^2] \leq \|\xi_{b, i}\|_2 \leq  \|\xi_i\|_2 \leq    \|\xi_i\|_3,  \\
P(|D_{1n}| > 1/2) \leq 2\|D_{1n}\|_2, \qquad
 \|\Pi_1\|_2^2 \leq \sum_{i=1}^n \bE[\xi_{b, i}^4] \leq \sum_{i=1}^n \bE[|\xi_{b, i}|^3] \leq \sum_{i=1}^n \bE[|\xi_i|^3],
\end{multline*}
and
\begin{align*}
P(|D_{2n}| > 1/2)  &\leq P(|\Pi_1| + |\Pi_2| > 1/2)\\
& \leq P(|\Pi_1| >1/4) + P(|\Pi_2| >1/4) \\
&\leq  C(\|\Pi_1\|^2_2 + \|\Pi_2\|_2).
\end{align*}

\subsection{Proof of  \eqref{bound_on_ewD2_sq_simplified}.}

First, letting  $W_b^{(i, j)} \equiv W_b - \xi_{b, i} - \xi_{b, j}$ for $1 \leq i \neq j \leq n$, we have
\begin{align}
&\bE[ \Pi_1^2   e^{W_b}] = \sum_{i=1}^n \bE[ (\xi_{b, i}^2 - \bE[\xi_{b, i}^2] )^2 e^{\xi_{b, i}}] \bE[e^{ W_b^{(i)}}]
+ \notag\\
&\hspace{3cm}\sum_{1\leq i \neq j \leq n} \bE[(\xi_{b, i}^2 - \bE[\xi_{b, i}^2] ) e^{ \xi_{b, i}}]  \bE[(\xi_{b, j}^2 - \bE[\xi_{b, j}^2] ) e^{ \xi_{b, j}}] \bE[e^{ W_b^{(i,j)}}] \notag\\
&= \sum_{i=1}^n \bE[ (\xi_{b, i}^2 - \bE[\xi_{b, i}^2] )^2 e^{\xi_{b, i}}] \bE[e^{ W_b^{(i)}}]
+ \notag\\
&\hspace{3cm}\sum_{1\leq i \neq j \leq n} \bE[(\xi_{b, i}^2 - \bE[\xi_{b, i}^2] ) (e^{ \xi_{b, i}} - 1)]  \bE[(\xi_{b, j}^2 - \bE[\xi_{b, j}^2] ) (e^{ \xi_{b, j}} - 1)] \bE[e^{ W_b^{(i,j)}}] \notag\\
&\leq C\left( \sum_{i=1}^n \bE[\xi_{b, i}^4] + \sum_{1 \leq i \neq j \leq n} \bE \Big[|\xi_{b, i}^2 - \bE[\xi_{b, i}^2] || \xi_{b, i}|\Big]  \bE\Big[|\xi_{b, j}^2 - \bE[\xi_{b, j}^2] | |\xi_{b, j}|\Big] \bE\Big[e^{W_b^{(i,j)}}\Big] \right) \notag\\
&\leq C \Bigg\{\sum_{i=1}^n  \|\xi_{b, i}\|_3^3 +  \sum_{1 \leq i \neq j \leq n}  \|\xi_{b, i}\|_3^3  \|\xi_{b, j}\|_2^2 \Bigg\} \leq C \sum_{i=1}^n  \|\xi_{b, i}\|_3^3
\label{Pi1_expbdd}
\end{align}
by \lemref{Bennett}, that $|e^s - 1| \leq |s| (e^a - 1)/a$ for $s \leq a$ and $a >0$, 
\begin{align}
 &\bE[|\xi_{b, i}^2 - \bE[\xi_{b, i}^2] || \xi_{b, i}|] \notag\\
  &\leq  \Bigg\{(\|\xi_{b, i}^2 - \bE[\xi_{b, i}^2]\|_{3/2} \| \xi_{b, i}\|_3)\wedge \bE[|\xi_{b, i}^2 - \bE[\xi_{b, i}^2]|] \Bigg\}\notag\\
 &\leq  2 \Bigg\{ \|\xi_{b, i}\|_3^3 \wedge  \|\xi_{b, i}\|_2^2 \Bigg\} \text{ for any } i = 1, \dots, n,\notag
\end{align}
and $\sum_{j=1}^n\|\xi_{b, i}\|_3^3  \|\xi_{b, j}\|_2^2 \leq \|\xi_{b, i}\|_3^3$. Second, by \lemref{Bennett}, 
\begin{equation} \label{Pi2_expbdd}
\bE[\bar{\Pi}_2^2 e^{W_b}] \leq \bE[\bar{\Pi}_2^4]^{1/2} (\bE[e^{2W_b}])^{1/2} \leq C \bE[\Pi_2^2]^{1/2} = C \|\Pi_2\|_2 
\end{equation}
Combining \eqref{D2n_pi12_bdd}, \eqref{Pi1_expbdd} and \eqref{Pi2_expbdd} gives  \eqref{bound_on_ewD2_sq_simplified}.

\subsection{Proof of   \eqref{bound_on_x_D2_fx_simplified}.} \label{sec:pf_for_bound_on_x_D2_fx_simplified}

Since $\sup_{x \geq 0}|xf_x(w)| \leq C$ (which uses \eqref{fxbdd} in \lemref{helping} and that $|f_x| \leq 0.63$ in \lemref{helping_unif}),
\begin{align}
\sup_{x \geq 0}| x\bE[(D_{2n} - \bar{D}_{2n})f_x(W_b)]| &\leq \sup_{x \geq 0} x\bE[ (|D_{2n}| - 1/2)|f_x(W_b)| I(|D_{2n}| > 1/2)] \notag\\
 &\leq C \bE[ |D_{2n}| I(|D_{2n}| > 1/2)] \notag\\
 &\leq  C \Big(\bE[|\Pi_1|  I(|D_{2n}| > 1/2)] + \bE[|\Pi_2|] \Big)\notag\\
 &\leq C \Big( \bE \Big[|\Pi_1|  \Big\{I(|\Pi_1| > 1/4)+I(|\Pi_2| > 1/4) \Big\}\Big]  + \bE[|\Pi_2|] \Big)\notag\\
 &\leq C\Big(  \bE[ 4 \Pi_1^2 + 2 |\Pi_1| |\Pi_2|^{1/2}] + \bE[|\Pi_2|] \Big) \notag\\
 &\leq C\Big(   \bE[ 5\Pi_1^2 +  |\Pi_2| ] + \bE[|\Pi_2|] \Big) \notag\\
 &\leq   C\Big(\|\Pi_1\|_2^2 + \|\Pi_2\|_2 \Big), \notag
 \end{align}
 where we have used that $I(|\Pi_1| > 1/4) \leq 4 |\Pi_1|$, $I(|\Pi_2| > 1/4) \leq 2 |\Pi_2|^{1/2}$ and $2 |\Pi_1| |\Pi_2|^{1/2} \leq |\Pi_1|^2 + |\Pi_2|$.
Noting that
\begin{equation*}
x\bE[\bar{D}_{2n} f_x(W_b)] = x\bE[(\bar{D}_{2n} - D_{2n})f_x(W_b)] +x\bE[ D_{2n}f_x(W_b)],
\end{equation*}
the above implies
\begin{equation} \label{fxd2n_remnants_bdd}
\sup_{x \geq 0} \Big|x\bE[\bar{D}_{2n} f_x(W_b)] \Big| \leq C\Big(\|\Pi_1\|_2^2 + \|\Pi_2\|_2 \Big) + \sup_{x \geq 0} \Big|x\bE[ D_{2n}f_x(W_b)] \Big|,
\end{equation}
so for the rest of this section we  focus on bounding $\sup_{x \geq 0} \Big|x\bE[ D_{2n}f_x(W_b)] \Big|$.
From the form of $D_{2n}$ in \eqref{D2_specific_form}, by defining $\Pi  = \Pi_1 + \Pi_2$,  we have
\[
x\bE[D_{2n} f_x(W_b)] = \bE[x \Pi f_x(W_b)] - \bE[ xf_x(W_b)I(\Pi < -1) (1 + \Pi)],
\]
so it suffices to establish 
\begin{equation}\label{boils_down_bar_D_2n_fx_simplified}
\Big|\bE[x \Pi f_x(W_b)]\Big| \vee \Big|\bE[ xf_x(W_b)I(\Pi < -1) (1 + \Pi)]\Big| \leq C \Bigg( \sum_{i=1}^n \bE[|\xi_{b, i}|^3 ]  + \|\Pi_2\|_2 \Bigg) \text{ for all } x\geq 0.
\end{equation}

We first bound $\Big|\bE[ xf_x(W_b)I(\Pi < -1) (1 + \Pi)]\Big|$. Since 
\begin{equation} 
\bE[ xf_x(W_b)I(\Pi < -1) (1 + \Pi)] = \bE[xf_x(W_b)I(\Pi < -1)] +  \bE[xf_x(W_b) \Pi I(\Pi < -1)],
\end{equation}
we will bound the two terms on the right hand side separately.  As $x f_x(w)$ is   bounded for all $x \geq 0$ (\lemref{helping_unif} and \eqref{fxbdd} in \lemref{helping}), we have
\begin{align*}
\Big|\bE[xf_x(W_b)I(\Pi < -1)]\Big| &\leq \bE\Big[|xf_x(W_b)|I(\Pi < -1)\Big]\\
 &\leq  C \sum_{j=1}^2P(\Pi_j < -1/2) \leq C \Big(\|\Pi_1\|_2^2 + \|\Pi_2\|_2\Big) \end{align*}
and 
\begin{align*}
\Big|\bE[xf_x(W_b) \Pi I(\Pi < -1)]\Big| &\leq C\bE[|\Pi|I(\Pi < -1)]\\\
&\leq C \Bigg( \bE[|\Pi_1| I(\Pi < -1)]  +  \|\Pi_2\|_2  \Bigg)\\
&\leq C  \Bigg(\|\Pi_1\|_2 \sqrt{ \sum_{j=1}^2P(\Pi_j < -1/2)} +  \|\Pi_2\|_2  \Bigg) \\
&\leq C  \Bigg(\|\Pi_1\|_2 \sqrt{ \|\Pi_1\|_2^2 + \|\Pi_2\|_2 } +  \|\Pi_2\|_2 \Bigg)\\
&\leq C \Bigg( \|\Pi_1\|_2^2  + \|\Pi_1\|_2\sqrt{ \|\Pi_2\|_2} + \|\Pi_2\|_2   \Bigg)\\
&\leq C \Bigg( \|\Pi_1\|_2^2  +  \|\Pi_2\|_2 \Bigg) ,
\end{align*}
where the second last inequality uses $\sqrt{ \|\Pi_1\|_2^2 + \|\Pi_2\|_2 } \leq \|\Pi_1\|_2 + \sqrt{\|\Pi_2\|_2}$, and the  last inequality uses that $2|ab| \leq a^2 + b^2$ for any $a, b \in \bR$. So the part of \eqref{boils_down_bar_D_2n_fx_simplified} regarding $\Big|\bE[ xf_x(W_b)I(\Pi < -1) (1 + \Pi)]\Big|$ is proved because $\|\Pi_1\|_2^2 = \sum_{i=1}^n (\bE[\xi_{b, i}^4] - (\bE[\xi_{b, i}^2])^2) \leq \sum_{i=1}^n \bE[|\xi_{b, i}|^3 ]$.

Next we bound $\Big|\bE[x \Pi f_x(W_b)]\Big|$, and we will control the two terms on the right hand side of 
\begin{equation} \label{xD2nfxWb_first_bdd_simplified}
|\bE[x\Pi f_x(W_b)]| \leq x|\bE[\Pi_1 f_x(W_b)]| + x  | \bE[\Pi_2 f_x(W_b)]|.
\end{equation}
For the first term $x|\bE[\Pi_1 f_x(W_b)]|$, we  write
\begin{align}
\Big| \bE[\Pi_1 f_x(W_b)]\Big| &= \Bigg| \sum_{i=1}^n \bE\Big[(\xi_{b, i}^2 - \bE[\xi_{b, i}^2] ) (f_x (W_b) - f_x(W_b^{(i)})) 
\Big] \Bigg|\notag\\
&= \Bigg|\sum_{i=1}^n \bE\Big[(\xi_{b, i}^2 - \bE[\xi_{b, i}^2])  \int_0^{\xi_{b, i}} \bE[f_x'(W_b^{(i)} + t)] dt\Big] 
\Bigg|\notag\\
&\leq  \sum_{i=1}^n \bE\Big[ (\xi_{b, i}^2 + \bE[\xi_{b, i}^2])  \int_0^{|\xi_{b, i}|} |\bE[f_x'(W_b^{(i)} + t)]| dt\Big] \label{fromthere_simplified}, 
\end{align} 
where the second equality uses the independence of $W_b^{(i)} $ and $\xi_{b, i}$. 
From \eqref{fromthere_simplified}  and \lemref{expect_f'x}, for any $x \geq 1$,  we have that 
\begin{align*}
\Big|\bE[\Pi_1 f_x(W_b)]\Big| &\leq C\sum_{i=1}^n \bE\Big[ (\xi_{b, i}^2 + \bE[\xi_{b, i}^2])   \int_0^{|\xi_{b, i}|} (e^{ -x } + e^{- x +t})dt\Big]\\
&\leq C\sum_{i=1}^n \bE\Big[( \xi_{b, i}^2 + \bE[\xi_{b, i}^2]  ) \int_0^{|\xi_{b, i}|} (e^{ -x } + e^{- x +1})dt\Big]
\text{ ( as $|\xi_{b, i}| \leq 1$ )}\\
&\leq   C e^{-x}\sum_{i=1}^n \bigg(\bE[|\xi_{b,i}|^3] + \bE[|\xi_{b,i}|^2] \bE[|\xi_{b,i}|]\bigg) \\
&\leq C e^{-x}\sum_{i=1}^n \bE[|\xi_{b,i}|^3],
\end{align*}
which implies
\begin{equation} \label{supxgeq1_x_Pi1fx_simplified}
\sup_{x \geq 1} x \Big|\bE[\Pi_1 f_x(W_b)]\Big|  \leq C  \sum_{i=1}^n \bE[|\xi_{b, i}|^3] .
\end{equation}
Moreover, for $0 \leq x < 1$, since $|f_x'| \leq 1$ (\lemref{helping_unif}), from \eqref{fromthere_simplified} we get
\begin{equation} \label{supx0leq1_x_Pi1fx_simplified}
\sup_{0 \leq x < 1}  x \Big| \bE[\Pi_1 f_x(W_b)]\Big| \leq  \sum_{i=1}^n \bigg( \bE[|\xi_{b, i}|^3] + \bE[|\xi_{b, i}|^2] \bE[|\xi_{b, i}|]  \bigg) \leq 2 \sum_{i=1}^n \bE[|\xi_{b, i}|^3].
\end{equation}
For the term $x | \bE[\Pi_2 f_x(W_b)]|$, given that
$
\sup_{x \geq 0}|xf_x(w)| \leq C \text{ for all } w
$
(explained at the beginning of  \secref{pf_for_bound_on_x_D2_fx_simplified}), we have
\begin{equation} \label{supx_x_Pi2fx_simplified}
\sup_{x \geq 0}x |\bE[\Pi_2 f_x (W_b)]| \leq \sup_{x \geq 0} \bE[|\Pi_2 | |x f_x(W_b)|] \leq C \|\Pi_2\|_1 \leq C \|\Pi_2\|_2,
\end{equation}
Combining \eqref{xD2nfxWb_first_bdd_simplified} and \eqref{supxgeq1_x_Pi1fx_simplified}-\eqref{supx_x_Pi2fx_simplified} proves the  part of \eqref{boils_down_bar_D_2n_fx_simplified} regarding $|\bE[x\Pi f_x(W_b)]|$.


\newpage
\section{Proof of \lemref{Djn_minus_D_ni}} \label{app:proof_of_D2_stuff}

In this section we adopt the following  notation: For any natural numbers $k' \leq k$, we denote $[k':k] \equiv \{k', \dots, k\}$ and   $[k] \equiv \{1, \dots, k\}$. Moreover, for any natural number $k \geq 1$,   we let
\[
\bar{h}_{k, \{i_1, \dots, i_k\}}  \equiv \bar{h}_k(X_{i_1}, \dots, X_{i_k})
\]
with respect to the function $\bar{h}_k(\cdot)$ in \eqref{h_k_def}.  To prove  \lemref{Djn_minus_D_ni}, we need the following technical lemmas proven respectively in \appsref{pf_of_ustat_results} and \appssref{pf_of_counting_identities}. 

\begin{lemma}[Useful kernel bounds] \label{lem:ustat_results}
Under assumptions \eqref{mean_zero_kernel}-\eqref{unit_var},
\begin{enumerate}
\item For any $k \in [m]$,
\[
\bE[\bar{h}_k^2  ] \leq \bE[h_k^2] \leq \frac{k}{m}\bE[h^2]
\]
\item For any $i \in [n]$,
\begin{multline*}
 \bE \left[ \left(\sum_{\substack{1 \leq i_1 < \dots < i_{m-1} \leq n\\ i_l \neq i \text{ for } l \in [m-1]}}\bar{h}_m(X_i, X_{i_1}, \dots, X_{i_{m-1}}) \right)^2\right] \\
 \leq  \frac{2(m-1)^2}{ n(n-m+1)}  {n-1 \choose m-1} {n \choose m} \bE[h^2] ;
\end{multline*}

\item For each $i \in [n]$, consider $\xi_{b, i}$ defined in \eqref{lower_and_upper_censored_xi_i} with $\xi_i$ defined in \eqref{u_stat_xi}. Given $k_1, k_2 \in [m]$, for any $1 \leq i_1 < \dots < i_{k_1} \leq n$ and $1 \leq j_1 < \dots < j_{k_2} \leq n$, we have
\begin{equation*}
\Big|\bE[\xi_{b, 1} \xi_{b, 2} \bar{h}_{k_1, \{ i_1, \dots, i_{k_1}\}}  \bar{h}_{k_2, \{j_1, \dots, j_{k_2}\}}]\Big| \\
 \leq \frac{9.5 \|g\|_3^2 \|h\|_3^2}{n} + \frac{2d\|h\|_2}{n}
\end{equation*}
where 
\[
d = | (\{ i_1, \dots, i_{k_1} \} \cap \{j_1, \dots, j_{k_2}\}) \backslash \{1, 2\}|,
\]
 the number of elements in the intersection of $\{ i_1, \dots, i_{k_1} \}$ and $\{ j_1, \dots, j_{k_2} \}$ that are not $1$ or $2$. 

\item If, in addition to all the conditions in $(iii)$, it is true that $1 \not \in \{j_1, \dots, j_{k_2}\}$ and $2 \not \in \{ i_1, \dots, i_{k_1} \} $, then we have the bound
\begin{equation*}
\Big|\bE[\xi_{b, 1} \xi_{b, 2} \bar{h}_{k_1, \{ i_1, \dots, i_{k_1}\}}  \bar{h}_{k_2, \{j_1, \dots, j_{k_2}\}}]\Big| 
 \leq \frac{9.5 \|g\|_3^2 \|h\|_3^2}{n} + \frac{2d\|h\|_2}{n^{3/2}}
\end{equation*}
\end{enumerate} 
\end{lemma}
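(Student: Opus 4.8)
Write $S_1=\{i_1,\dots,i_{k_1}\}$ and $S_2=\{j_1,\dots,j_{k_2}\}$, and record three elementary facts that drive everything. (a) Censoring is a small perturbation: $|\xi_{b,i}|\le|\xi_i|=|g(X_i)|/\sqrt n$, and writing $\tilde\xi_i\equiv\xi_i-\xi_{b,i}$ one has $|\tilde\xi_i|\le|\xi_i|I(|\xi_i|>1)\le g(X_i)^2/n$ together with $\bE[|\tilde\xi_i|]\le\bE[\xi_i^2]=1/n$ (by \lemref{exp_xi_bi_bdd}) and $\bE[|\tilde\xi_i|^{3/2}]\le\|g\|_3^3/n^{3/2}$. (b) The $\bar h_k$ are \emph{first-order degenerate}: integrating out any single argument lowers the order, $\bE[\bar h_k(X_1,\dots,X_k)\mid X_1,\dots,X_{k-1}]=\bar h_{k-1}(X_1,\dots,X_{k-1})$, and in particular $\bE[\bar h_k\mid X_l]=0$ for each $l$. (c) $\|\bar h_k\|_2\le\|h\|_2$ (from part $(i)$) and, for the small orders $k\le 3$ that appear in the core estimates, $\|\bar h_k\|_3\le 4\|h\|_3$ by Jensen, Minkowski and $\|g\|_3=\|h_1\|_3\le\|h\|_3$. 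Parts $(i)$ and $(ii)$ are short consequences of (b)--(c); $(iii)$--$(iv)$ are the substance.

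\textbf{Proof of $(i)$ and $(ii)$.} For $(i)$, expand $\bar h_k^2$ and use $\bE[h_k(X_1,\dots,X_k)\mid X_l]=g(X_l)$ and independence to get $\bE[\bar h_k^2]=\bE[h_k^2]-2k+k=\bE[h_k^2]-k\le\bE[h_k^2]$; the inequality $\bE[h_k^2]\le(k/m)\bE[h^2]$ is the classical U-statistic moment bound \citep[Lemma~1.1.4]{korolyuk2013theory}. For $(ii)$, expand the square of the inner sum. By (b), $\bar h_m(X_i,X_A)$ and $\bar h_m(X_i,X_B)$ are uncorrelated when $A\cap B=\emptyset$; when $t\equiv|A\cap B|\ge 1$, conditioning on $X_{\{i\}\cup(A\cap B)}$ and integrating each factor down gives $\bE[\bar h_m(X_i,X_A)\bar h_m(X_i,X_B)]=\bE[\bar h_{t+1}^2]\le\frac{t+1}{m}\bE[h^2]$ by $(i)$. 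Hence the left side of $(ii)$ is at most $\frac{\bE[h^2]}{m}\sum_{t\ge1}(t+1)\#\{(A,B):|A\cap B|=t\}\le\frac{2\bE[h^2]}{m}\sum_{(A,B)}|A\cap B|=\frac{2(m-1)^2}{m(n-1)}\binom{n-1}{m-1}^2\bE[h^2]$, using $\#\{|A\cap B|\ge1\}\le\sum|A\cap B|$ and $\sum_{(A,B)}|A\cap B|=(n-1)\binom{n-2}{m-2}^2$. Since $\binom{n}{m}=\frac nm\binom{n-1}{m-1}$ and $n-m+1\le n-1$, this is bounded by the stated expression.

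\textbf{Proof of $(iii)$ and $(iv)$.} Substitute $\xi_{b,i}=\xi_i-\tilde\xi_i$ and expand $\xi_{b,1}\xi_{b,2}$ into four terms. For the leading term $\xi_1\xi_2\,\bar h_{k_1,S_1}\bar h_{k_2,S_2}=n^{-1}g(X_1)g(X_2)\bar h_{k_1,S_1}\bar h_{k_2,S_2}$: if $1$ or $2$ lies outside $S_1\cup S_2$, the corresponding $g(X_\cdot)$ factors off with mean zero and the term vanishes; otherwise put $F=(S_1\cap S_2)\setminus\{1,2\}$, $d=|F|$, $S_\ell'=(S_\ell\cap\{1,2\})\cup F$, condition on $X_{\{1,2\}\cup F}$, and integrate each $\bar h$ down via (b) to obtain $n^{-1}\bE[g(X_1)g(X_2)\bar h_{|S_1'|}(X_{S_1'})\bar h_{|S_2'|}(X_{S_2'})]$, supported on at most $d+2$ coordinates. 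Hoeffding-decomposing each $\bar h_{|S_\ell'|}$ and using orthogonality (a nonzero expectation of a product of canonical kernels forces every coordinate to occur in at least two factors) splits this into a ``core'' part supported on $\{1,2\}$ and parts indexed by subsets of $F$: the core is handled by Cauchy--Schwarz and H\"older with the $L^2$ and $L^3$ bounds of (c), yielding the $\|g\|_3^2\|h\|_3^2$ term, while the $F$-indexed parts are $\le d$ times a quantity of the form $\bE[\bar h_{t+1}^2]\le\|h\|_2^2$, yielding the $d\|h\|_2$ term. Under the extra hypotheses of $(iv)$, $1\notin S_2$ and $2\notin S_1$ force $1\in S_1$, $2\in S_2$, so $\bar h_{k_1,S_1}$ and $\bar h_{k_2,S_2}$ depend on disjoint non-shared coordinates; conditioning on $X_F$ then produces an exact square $\bE[\rho(X_F)^2]$ whose dominant (first-order) Hoeffding piece is $d$-free, which is what upgrades $n^{-1}$ to $n^{-3/2}$ in the $d$-term. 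Finally, each of the three remaining terms of the expansion carries a factor $\tilde\xi_i$ with $\bE[|\tilde\xi_i|]\le n^{-1}$ (and $\bE[|\tilde\xi_i|^{3/2}]\le\|g\|_3^3 n^{-3/2}$), which provides exactly the extra decay needed to absorb those terms into the same two bounds through the same H\"older/orthogonality bookkeeping.

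\textbf{Expected main obstacle.} The difficulty is the enumeration behind $(iii)$--$(iv)$: one must traverse every configuration of $\{1,2\}$ relative to $S_1,S_2$ and every overlap size $d$ and, in each case, produce an estimate that (A) never uses more than third absolute moments of $h$ (and second moments wherever the degeneracy is exploited)---naive H\"older applied to $|g(X_1)g(X_2)\bar h_{k_1}\bar h_{k_2}|$ already requires sixth moments, so the orthogonality and order-collapse must be invoked \emph{before} taking absolute values---and (B) reproduces the precise powers of $n$, including the $n^{-3/2}$ sharpening under the hypotheses of $(iv)$. This is exactly the family of $O(1/n)$ (resp.\ $O(1/n^{3/2})$) expectation estimates that the paper identifies as missing from \citet{shao2016stein}.
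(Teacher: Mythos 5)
Parts $(i)$ and $(ii)$ are fine (your bound for $(ii)$, $\tfrac{2(m-1)^2}{m(n-1)}\binom{n-1}{m-1}^2\bE[h^2]$, is actually a hair sharper than the stated one, and the paper simply defers these to a reference, so your self-contained argument is a welcome addition). The problem is $(iii)$--$(iv)$, where your route diverges from the paper's and the divergence introduces gaps you do not close. The paper works directly with $\xi_{b,1}\xi_{b,2}$, reduces WLOG to $I\setminus\{1,2\}=J\setminus\{1,2\}=[3:(d+2)]$, and then expands only one level, $\bar h_I = h_I - \sum_{i\in I}g(X_i)$, giving $(1+|I|)(1+|J|)$ cross terms $HH$, $GH_\ell$, $GG_\ell$. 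The crucial $d$-dependence arises because after the reduction exactly $d$ of the indices in the ``$g$'' sums lie outside $\{1,2\}$: the $GH_3+GH_4$ block has precisely $2d$ summands, each bounded via Cauchy--Schwarz by $n^{-1}\|h\|_2$, and all the $L^3$ norms that appear are $\|h_I\|_3,\|h_J\|_3\le\|h\|_3$ by Jensen, with no $m$ or $d$ overhead. Your approach instead (a) peels $\xi_{b,i}=\xi_i-\tilde\xi_i$ and (b) fully Hoeffding-decomposes $\bar h_{|S_\ell'|}$ into canonical pieces. This is a genuinely different route, but it does not obviously reach the stated bound.

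Concretely: after Hoeffding decomposition, orthogonality forces $T_1\cap F=T_2\cap F$ for a nonzero term, which still leaves $2^{d}$ choices of that common set, not the ``$\le d$'' you assert for the $F$-indexed parts (and your display $\bE[\bar h_{t+1}^2]\le\|h\|_2^2$ would in any case yield a $d\|h\|_2^2/n$ term, not the stated $d\|h\|_2/n$). One could try to resum over $T'\subset F$ via Parseval, but the $g(X_1)g(X_2)$ factor then demands higher moments, and you do not explain how the $d$-linear dependence survives. The $\tilde\xi$ residuals are a second gap: a direct H\"older on $\bE[\tilde\xi_1\xi_2\bar h_{|S_1'|}\bar h_{|S_2'|}]$ produces $\|\bar h_{|S_\ell'|}\|_3$, and $\|\bar h_k\|_3\le\|h_k\|_3+k\|g\|_3$ carries an $O(d)$ overhead that the stated bound has no room for; your ``same bookkeeping'' claim does not address this, whereas the paper's one-level expansion of $\bar h$ \emph{before} H\"older sidesteps it entirely. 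Finally, for $(iv)$ your ``exact square'' $\bE[\rho(X_F)^2]$ is bounded by $\|h\|_2^2$ via Cauchy--Schwarz, which only delivers $n^{-1}$, not $n^{-3/2}$, and it is unclear why a $d$ factor would remain; the actual mechanism in the paper is elementary: under $1\notin J$ the summand $\bE[\xi_{b,1}\xi_{b,2}g(X_i)h_J]$ with $i\in I\setminus\{1,2\}$ factors as $\bE[\xi_{b,1}]\,\bE[\xi_{b,2}g(X_i)h_J]$, and $|\bE[\xi_{b,1}]|\le\bE[\xi_1^2]=n^{-1}$ (\lemref{exp_xi_bi_bdd}) supplies the extra $n^{-1/2}$.
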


\begin{lemma}[Counting identities and bounds]\label{lem:counting_identities}
Let $m , n $ be non-negative integers such that $m \leq n$.
\begin{enumerate}
\item Suppose $n_1$ and $n_2$ are non-negative integers such that $n_1 + n_2 = n$. Then
\[
\sum_{k = 0}^m {n_1 \choose k} {n_2 \choose m - k} = {n \choose m}.
\]
\item Suppose $k$ is a non-negative integer such that $k \leq m$. Then 
\[
{n \choose k} {n - k \choose m - k} = {n  \choose m} {m \choose k}.
\]
\item For positive integers  $a, b, e$ such that $b+ e \leq a$, we have
\[
\binom{a}{b} - \binom{a -e}{b} \leq \binom{a}{b} \frac{ b e}{a - b +1}.
\]
\end{enumerate}
\end{lemma}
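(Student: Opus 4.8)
The plan is to treat the three identities separately; all are elementary, and the only one that requires any thought is part (iii).

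For part (i) I would recognize this as the Chu--Vandermonde convolution and prove it by comparing the coefficient of $x^m$ on the two sides of $(1+x)^{n_1}(1+x)^{n_2} = (1+x)^{n}$. Expanding the left side by the binomial theorem, the coefficient of $x^m$ is $\sum_{k=0}^m \binom{n_1}{k}\binom{n_2}{m-k}$, while on the right it is $\binom{n}{m}$. (Equivalently, one counts the $m$-subsets of a disjoint union of an $n_1$-set and an $n_2$-set according to how many chosen elements land in the first block.)

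For part (ii) I would use the ``subset of a subset'' double-counting argument: count pairs $K \subseteq M \subseteq \{1,\dots,n\}$ with $|K| = k$ and $|M| = m$. Choosing $K$ first and then completing it to $M$ by adjoining $m-k$ of the remaining $n-k$ elements gives $\binom{n}{k}\binom{n-k}{m-k}$; choosing $M$ first and then $K$ inside it gives $\binom{n}{m}\binom{m}{k}$. A direct factorial computation works just as well.

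Part (iii) is the substantive one, and the step I expect to be the (minor) crux is choosing a convenient product form for the ratio $\binom{a-e}{b}/\binom{a}{b}$. The plan is to write this ratio, from the falling-factorial formula for binomial coefficients, as $\prod_{i=0}^{b-1}\bigl(1 - \tfrac{e}{a-i}\bigr)$ --- expanding over the $b$ factors rather than over the $e$ of them. Using the hypothesis $b + e \le a$, every index $i \in \{0,\dots,b-1\}$ satisfies $a - i \ge a - b + 1 \ge e + 1$, so each factor lies in $(0,1]$ and the elementary inequality $\prod_i(1 - x_i) \ge 1 - \sum_i x_i$ for $x_i \in [0,1]$ (a one-line induction) applies. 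Then bounding $\sum_{i=0}^{b-1}\tfrac{1}{a-i} \le \tfrac{b}{a-b+1}$ --- each summand is at most $\tfrac{1}{a-b+1}$ --- gives $\binom{a-e}{b}/\binom{a}{b} \ge 1 - \tfrac{be}{a-b+1}$, which rearranges to the claimed bound. I would also remark that the alternative expansion into $e$ factors, $\prod_{j=0}^{e-1}\bigl(1 - \tfrac{b}{a-j}\bigr)$, leads instead to the weaker denominator $a-e+1$, which is exactly why the choice of grouping matters here.
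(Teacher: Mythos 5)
Your proof is correct and follows essentially the same route as the paper's: combinatorial double-counting for parts (i) and (ii) (your generating-function proof of (i) is an equivalent variant the paper also alludes to), and for part (iii) the paper likewise writes $\binom{a-e}{b}/\binom{a}{b}$ as the product $\prod_{j=a-b+1}^{a}(1 - e/j)$ over $b$ factors (your expression under the reindexing $j = a-i$), applies $\prod_i(1-x_i) \ge 1 - \sum_i x_i$, and bounds each summand by $1/(a-b+1)$. Your closing remark about why the $b$-factor rather than $e$-factor grouping is the right one is a useful observation that the paper does not state explicitly, but it does not change the argument.
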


In addition to the lemmas above, 
we will make use of the following enumerative equalities, whenever the binomial coefficients involved are well-defined:\begin{align}
       \binom{n-2}{m-1} &= \binom{n-1}{m-1}\frac{n-m}{n-1}, \label{enum_eq_1}\\
       \binom{n-2}{m-2} &= \binom{n-1}{m-1} \frac{m-1}{n-1},  \label{enum_eq_2}\\
  \binom{n-3}{m-2} &= \binom{n-1}{m-1} \frac{(m-1)(n-m)}{(n-1)(n-2)}   \label{enum_eq_3}\\
   \binom{n-3}{m-3} &=  \binom{n-1}{m-1} \frac{(m-1)(m-2)}{(n-1)(n-2)} \label{enum_eq_4}\text{, \quad and } \\
      \binom{n-4}{m-4} &=  \binom{n-1}{m-1} \frac{(m-1)(m-2)(m-3)}{(n-1)(n-2)(n-3)} \label{enum_eq_5}.
       \end{align}

\subsection{Proof of   \lemref{Djn_minus_D_ni}$(i)$} 
We shall further let
\begin{multline} \label{Pi1Pi2Pi3_def}
\Pi_{21} \equiv (n^{-1/2} | 0) - \sum_{i=1}^n \bE[ (\xi^2_i - 1)I(|\xi_i| > 1) ] \text{ and }\\
\Pi_{22} \equiv \delta_{2n, b} =  \frac{2 (n-1)}{(n-m)} {n-1 \choose m-1}^{-1} \sum_{i=1}^n \xi_{b, i} \Psi_{n,i},
\end{multline}
so $\Pi_2 = \Pi_{21} + \Pi_{22}$.
It suffices to show these  bounds for  $\Pi_{21}$ and $\Pi_{22}$ in \eqref{Pi1Pi2Pi3_def}:
 \begin{equation} \label{Pi21_bdd}
\|\Pi_{21}\|_2^2 \leq C \left(\frac{\|g\|_3^6}{n} + \frac{1}{n}\right)  \leq  C \frac{\|g\|_3^6}{n} .
\end{equation}
 \begin{equation}\label{Pi22_bdd}
\|\Pi_{22}\|_2^2\leq 
C\frac{m^2\|g\|_3^2 \|h\|_3^2}{n}  
 \end{equation}
 From there, since $\|\Pi_2\|_2 \leq \|\Pi_{21}\|_2 + \|\Pi_{22}\|_2$, \lemref{Djn_minus_D_ni}$(i)$ is proved.

\subsubsection{Proof of \eqref{Pi21_bdd}}

We first note that 
\[
\sum_{i=1}^n \bE\Big[ (\xi^2_i - 1)I(|\xi_i| > 1) \Big]
\leq \sum_{i=1}^n \bE \Big[ \xi^2_i I(|\xi_i| > 1) \Big] \leq \sum_{i=1}^n\bE[|\xi_i|^3] = \bE[|g|^3]/\sqrt{n},
\]
which gives $(\sum_{i=1}^n \bE[ (\xi^2_i - 1)I(|\xi_i| > 1) ])^2 \leq  (\bE[|g|^3])^2/n$, and hence \eqref{Pi21_bdd}.
 
 \subsubsection{Proof of \eqref{Pi22_bdd}}
It is trivial for $m=1$ since $\Psi_{n,i} = 0$. For $m\geq 2$, first write 
\[
\Pi_{22}^2 =  \frac{4 (n-1)^2}{(n-m)^2n} {n-1 \choose m-1}^{-2}    \left(\sum_{i=1}^n \xi_{b, i}  \sum_{\substack{1 \leq i_1 < \dots < i_{m-1} \leq n\\ i_l \neq i \text{ for } l \in [m-1]}} \bar{h}_m(X_i, X_{i_1}, \dots, X_{i_{m-1}}) \right)^2,
\]
which implies immediately from $2m < n$ in \eqref{u_stat_deg_assumption} that
\begin{equation}\label{Pi2_first_bdd}
\bE\left[\Pi_{22}^2\right] \leq \frac{16}{n} {n-1 \choose m-1}^{-2} \bE \left[    \left(\sum_{i=1}^n \xi_{b, i}  \sum_{\substack{1 \leq i_1 < \dots < i_{m-1} \leq n\\ i_l \neq i \text{ for } l\in [m-1]}} \bar{h}_m(X_i, X_{i_1}, \dots, X_{i_{m-1}}) \right)^2\right].
\end{equation}
Upon expanding the above expectation,
\begin{align}
&\bE\left[\left(\sum_{i=1}^n \xi_{b, i} \sum_{\substack{1 \leq i_1 < \dots < i_{m-1} \leq n\\ i_l \neq i \text{ for } l \in [m-1]}} \bar{h}_{m, \{i, i_1, \dots, i_{m-1}\}} \right)^2\right] \notag\\
&=  \sum_{i=1}^n 
\bE \left[ \left(\xi_{b, i} \sum_{\substack{1 \leq i_1 < \dots < i_{m-1} \leq n\\ i_l \neq i \text{ for } l \in [m-1]}}  \bar{h}_{m, \{i, i_1, \dots, i_{m-1}\}}\right)^2\right]\notag \\
&\  + \sum_{ 1\leq i \neq j \leq n} \bE
\Bigg[
\Bigg(\xi_{b, i} \sum_{\substack{1 \leq i_1 < \dots < i_{m-1} \leq n\\ i_l \neq i \text{ for } l \in [m-1]}}  \bar{h}_{m, \{i, i_1, \dots, i_{m-1}\}}\Bigg) \times 
\Bigg(
\xi_{b, j} \sum_{\substack{1 \leq j_1 < \dots < j_{m-1} \leq n\\ j_l \neq j \text{ for } l \in [m-1]}}  \bar{h}_{m, \{j, j_1, \dots, j_{m-1}\}}
\Bigg)\Bigg]\notag\\
&= n \bE \left[ \left(\xi_{b, 1} \sum_{\substack{1 \leq i_1 < \dots < i_{m-1} \leq n\\ i_l \neq 1 \text{ for } l = 1, \dots, m-1}}  \bar{h}_{m, \{1, i_1, \dots, i_{m-1}\}}\right)^2\right] + \label{exp1}\\
&n(n-1)
\bE
\Bigg[
\Bigg(\xi_{b, 1} \sum_{\substack{1 \leq i_1 < \dots < i_{m-1} \leq n\\ i_l \neq 1 \text{ for } l \in [m-1]}}  \bar{h}_{m, \{1, i_1, \dots, i_{m-1}\}}\Bigg) \Bigg(
\xi_{b, 2} \sum_{\substack{1 \leq j_1 < \dots < j_{m-1} \leq n\\ j_l \neq 2 \text{ for } l \in [m-1]}}  \bar{h}_{m, \{2, j_1, \dots, j_{m-1}\}}
\Bigg)\Bigg]. \label{exp2}
\end{align}
We need to control the two expectations   in \eqref{exp1} and \eqref{exp2}. We first bound the expectation  in \eqref{exp1}. With the definition in \eqref{h_k_def} and that
\[
\bE[\bar{h}_{m, \{1, i_1, \dots, i_{m-1}\}} \bar{h}_{m, \{1, j_1, \dots, j_{m-1}\}}] = \bE[\bar{h}_{1, \{1\}}^2] = 0 \text{ if }  |\{i_1, \dots, i_{m-1}\}\cap \{j_1, \dots, j_{m-1}\}| = 0, 
\]
we can write
\begin{align}
& \bE \left[ \left(\xi_{b, 1} \sum_{\substack{1 \leq i_1 < \dots < i_{m-1} \leq n\\ i_l \neq 1 \text{ for } l \in [m-1]}}  \bar{h}_{m, \{1, i_1, \dots, i_{m-1}\}}\right)^2\right] \notag\\
 &  = 
 \bE\left[
 \xi_{b, 1}^2
 \sum_{k=0}^{m-1}
\left( \sum_{\substack{1 \leq i_1 < \dots < i_{m-1} \leq n\\  
1 \leq j_1 < \dots < j_{m-1} \leq n
\\i_l, j_l \neq 1 \text{ for } l \in [m-1] \notag\\
|\{i_1, \dots, i_{m-1}\}\cap \{j_1, \dots, j_{m-1}\}| = k
}} 
 \bar{h}_{m, \{1, i_1, \dots, i_{m-1}\}}\bar{h}_{m, \{1, j_1, \dots, j_{m-1}\}} \right)
 \right]\notag\\
  &= 
\sum_{k=1}^{m-1}
{n-1 \choose k} {n -k  - 1\choose m - k - 1} {n - m  \choose m-k-1}   \bE\Bigg[       \xi_{b, 1}^2   
    \bar{h}_{k+1}^2(X_1 \dots, X_{k+1}) 
   \Bigg]\notag\\
      &\leq \sum_{k=1}^{m-1}
{n-1 \choose k} {n -k -1\choose m - k -1} {n - m  \choose m-k-1}
 \frac{k+1}{m}  \bE[ h^2 ], \label{exp1_bdd}
\end{align}
where the last inequality comes from \lemref{ustat_results}$(i)$ and that $\xi_{b, 1}^2 \leq 1$. Continuing from \eqref{exp1_bdd}, we can get
\begin{align}
& \bE \left[ \left(\xi_{b, 1} \sum_{\substack{1 \leq i_1 < \dots < i_{m-1} \leq n\\ i_l \neq 1 \text{ for } l \in [m-1]}}  \bar{h}_{m, \{1, i_1, \dots, i_{m-1}\}}\right)^2\right] \notag\\
&\leq \sum_{k=1}^{m-1}
{n-1 \choose k} {n -k -1\choose m - k -1} {n - m  \choose m-k-1}
 \frac{k+1}{m}  \bE[         
    h^2 ] \notag\\
   &= \frac{1}{m}\binom{n-1}{m-1}\sum_{k=1}^{m-1}\binom{m-1}{k}\binom{n-m}{m-k-1}(k+1) \bE[h^2] \text{ by \lemref{counting_identities}$(ii)$ }\notag\\
&= \frac{m-1}{m}\binom{n-1}{m-1}\sum_{k=1}^{m-1}\binom{m-2}{k-1} \frac{k+1}{k} \binom{n-m}{m-1-k}\bE [h^2] \notag\\
&\leq 2\binom{n-1}{m-1}\sum_{k=0}^{m-2}\binom{m-2}{k}\binom{n-m}{m-2-k}\bE[h^2] \notag\\
&=2\binom{n-1}{m-1}\binom{n-2}{m-2}\bE[h^2] \text{ by \lemref{counting_identities}$(i)$} \notag\\
& = 2\frac{m-1}{n-1}\binom{n-1}{m-1}^2\bE[h^2] \label{exp1_bdd_final}
\end{align}

Now we bound the expectation in \eqref{exp2}. We first expand it as
\begin{align}
&\bE
\Bigg[
\Bigg(\xi_{b, 1} \sum_{\substack{1 \leq i_1 < \dots < i_{m-1} \leq n\\ i_l \neq 1 \text{ for } l \in [m-1]}}  \bar{h}_{m, \{1, i_1, \dots, i_{m-1}\}}\Bigg)  \Bigg(
\xi_{b, 2} \sum_{\substack{1 \leq j_1 < \dots < j_{m-1} \leq n\\ j_l \neq 2 \text{ for } l \in [m-1]}}  \bar{h}_{m, \{2, j_1, \dots, j_{m-1}\}})
\Bigg)\Bigg] \label{exp2_expand}\\
&
 = {n-2 \choose m-1} {n-2-(m -1)\choose m-1}\Bigg(\underbrace{\bE\left[\xi_{b, 1}  \bar{h}_{m, \{1, \dots, m\}}\right]}_{
= \bE [\bE[\xi_{b,1} \bar{h}_{m, \{1, \dots, m\}} | X_1]] =\bE[\xi_{b,1}\bar{h}_1 (X_1)] =  0}\Bigg)^2 \notag\\
&
+ 2 \times  {n-2 \choose m-2} {n-2 - (m-2) \choose m-1}\underbrace{\bE \Bigg[ \xi_{b, 1} \xi_{b, 2} \bar{h}_{m, \{1, 2, \dots, m\}}  \bar{h}_{m, \{2, m+1, \dots, 2m-1 \}} \Bigg] }_{
 = \bE[ \bE[ \xi_{b, 1} \xi_{b, 2} \bar{h}_{2, \{1, 2\}}  \bar{h}_{1, \{2 \}}| X_1, X_2]]=0 \text{ since } \bar{h}_{1, \{ 2\}} = 0} \notag\\
 &
+ 2 \times 
\underbrace{\sum_{ \substack{ 1 \leq i_1 < \dots < i_{m-2} \leq n \\ 1 \leq j_1 < \dots < j_{m-1} \leq n\\
i_l, j_v \neq 1, 2, \text{ for } l \in [m-2], v \in [m-1]\\
|\{i_1, \dots, i_{m-2}\}\cap \{j_1, \dots, j_{m-1}\}| \geq 1
}}
\bE[\xi_{b, 1} \xi_{b, 2} \bar{h}_{m, \{1, 2, i_1, \dots, i_{m-2}\}}  \bar{h}_{m, \{2, j_1, \dots, j_{m-1}\}}] }_{\equiv EA}\notag\\
&
+ \underbrace{ \sum_{ \substack{ 1 \leq i_1 < \dots < i_{m-1} \leq n \\ 1 \leq j_1 < \dots < j_{m-1} \leq n\\
i_l, j_l \neq 1, 2, \text{ for } l \in [m-1]\\
|\{i_1, \dots, i_{m-1}\}\cap \{j_1, \dots, j_{m-1}\}| \geq 1
}}\bE[\xi_{b, 1} \xi_{b, 2} \bar{h}_{m, \{1, i_1, \dots, i_{m-1}\}}  \bar{h}_{m, \{2, j_1, \dots, j_{m-1}\}}]}_{\equiv EB} \notag\\
&
+\underbrace{  \sum_{ \substack{ 1 \leq i_1 < \dots < i_{m-2} \leq n \\ 1 \leq j_1 < \dots < j_{m-2} \leq n\\
i_l, j_l \neq 1, 2,  \text{ for } l \in [m-2]
}}\bE[\xi_{b, 1} \xi_{b, 2} \bar{h}_{m, \{1, 2, i_1, \dots, i_{m-2}\}}  \bar{h}_{m, \{1, 2, j_1, \dots, j_{m-2}\}}]}_{\equiv EC} \notag,
\end{align}
and will then bound each of $EA$, $EB$ and $EC$.

We start with $EA$, and it suffices to assume $m \geq 3$, otherwise one cannot expect  the two sets $\{i_1, \dots, i_{m-2}\}$ and $\{j_1, \dots, j_{m-1}\}$ indexing a given summand 
\[
\bE [ \xi_{b, 1} \xi_{b, 2} \bar{h}_{m, \{1, 2, i_1, \dots, i_{m-2}\}}  \bar{h}_{m, \{2, j_1, \dots, j_{m-1} \}}]
\]
of $EA$
to intersect for at least one element. Using the fact that the data $X_1, \dots, X_n$ are i.i.d., if the two index sets 
have $k \in [m-2]$ common elements not in the set $\{1, 2\}$, one can write the summand as
\begin{multline*}
\bE [ \xi_{b, 1} \xi_{b, 2} \bar{h}_{m, \{1, 2, i_1, \dots, i_{m-2}\}}  \bar{h}_{m, \{2, j_1, \dots, j_{m-1} \}}] = \\
\bE[\xi_{b, 1} \xi_{b, 2}\ \bar{h}_m(X_1, X_2, \dots, X_m)\ \bar{h}_m(X_2,X_3,\dots, X_{k+2}, X_{m+1},\dots, X_{2m-1-k})].
\end{multline*}
 From this, we can alternatively write 
\begin{multline*}
EA =  \\
\sum_{k=1}^{m-2}\binom{n-2}{k}\binom{n-2-k}{m-2-k} \binom{n-m}{m-1-k} 
  \bE\Big[\xi_{b, 1} \xi_{b, 2}\ \bar{h}_{m, [1:m]}\ \bar{h}_{m, [2: (k+2)]\cup[(m+1) : (2m-k-1)]}\Big];
\end{multline*}
from this, we can then form the bound
\begin{align*}
    &|EA| \\
    &\leq \sum_{k=1}^{m-2}\binom{n-2}{k}\binom{n-2-k}{m-2-k} \binom{n-m}{m-1-k}  \Big| \bE\Big[\xi_{b, 1} \xi_{b, 2}\ \bar{h}_{m, [1:m]}\ \bar{h}_{m, [2: (k+2)]\cup[(m+1) : (2m-k-1)]}\Big]\Big|\\
    &= \binom{n-2}{m-2}\sum_{k=1}^{m-2}\binom{m-2}{k}\binom{n-m}{m-1-k}  \Big| \bE\Big[\xi_{b, 1} \xi_{b, 2}\ \bar{h}_{m, [1:m]}\ \bar{h}_{m, [2: (k+2)]\cup[(m+1) : (2m-k-1)]}\Big]\Big|\\
        &  \hspace{10cm} \text{ by \lemref{counting_identities}$(ii)$ }\\
    &\leq \binom{n-2}{m-2}\sum_{k=1}^{m-2}\binom{m-2}{k}\binom{n-m}{m-1-k} \bigg\{\frac{9.5 \|g\|_3^2 \|h\|_3^2}{n} + \frac{2 k\|h\|_2}{n}\bigg\}\\
    &  \hspace{10cm} \text{ by \lemref{ustat_results} $(iii)$ } \\ 
    &= \binom{n-2}{m-2} \bigg\{ \bigg[\binom{n-2}{m-1} - \binom{n-m}{m-1} \bigg]\frac{9.5 \|g\|_3^2 \|h\|_3^2}{n}   + (m-2)\binom{n-3}{m-2}\frac{2 \|h\|_2}{n} \bigg\},
    \end{align*}
    where the last line comes from the equalities
    \begin{align*}
         \sum_{k=1}^{m-2}\binom{m-2}{k}\binom{n-m}{m-1-k} &=  \sum_{k=0}^{m-2}\binom{m-2}{k} \binom{n-m}{m-1-k}  - \binom{n-m}{m-1}\\
          &= \binom{n-2}{m-1} - \binom{n-m}{m-1}  \text{ by \lemref{counting_identities}$(i)$}
    \end{align*}
        and 
      \begin{align*}
      \sum_{k=1}^{m-2} k \binom{m-2}{k} \binom{n-m}{m-1-k} &= (m-2) \sum_{k=1}^{m-2}  \binom{m-3}{k -1} \binom{n-m}{m-1-k}  \\
      &= (m-2) \sum_{k=0}^{m-3} \binom{m-3}{k } \binom{n-m}{m-2-k}\\
      &= (m-2) \binom{n -3}{m-2}  \text{ coming from \lemref{counting_identities}$(i)$}
      \end{align*}
      Continuing, we get
    \begin{align}
    |EA|
    &\leq \binom{n-2}{m-2} \bigg\{  \binom{n-2}{m-1} \frac{9.5(m -2)(m-1)\|g\|_3^2 \|h\|_3^2}{(n -m)n}  +  (m-2)\binom{n-3}{m-2}\frac{2 \|h\|_2}{n} \bigg\} \notag\\
    & \hspace{9cm} \text{ by \lemref{counting_identities}$(iii)$} \notag\\
    &= \binom{n-1}{m-1}^2 \bigg\{ \frac{9.5(m-2)(m-1)^2   \|g\|_3^2 \|h\|_3^2}{(n-1)^2 n} + \frac{2 (m-1)^2 (m-2)(n-m) \|h\|_2}{n (n-1)^2 (n-2)}\bigg\} \notag\\
    & \hspace{8cm} \text{ by \eqref{enum_eq_1}, \eqref{enum_eq_2} and \eqref{enum_eq_3}} \notag\\
    &\leq  C\binom{n-1}{m-1}^2 \frac{m^3 \|g\|_3^2 \|h\|_3^2}{n^3} \label{EA_final_bound},
      \end{align} 
 where the last line uses $2m < n$, and $1= \sigma_g \leq \|h\|_2 \leq \|h\|_3$.

       Now we bound $EB$. Analogously to $EA$, we first write
 \begin{align*}
    |EB| &\leq \sum_{k=1}^{m-1}\binom{n-2}{k}\binom{n-2-k}{m-1-k}\binom{n-m-1}{m-1-k}\\
        &\qquad\quad\ \Big|\bE[\xi_{b, 1} \xi_{b, 2}\ \bar{h}_m(X_1, X_3, \dots, X_{m+1})\ \bar{h}_m(X_2,\underbrace{X_3,\dots, X_{k+2}}_{k\ \text{shared}}, X_{m+2},\dots, X_{2m-k})]\Big|\\
    &= \binom{n-2}{m-1}\sum_{k=1}^{m-1}\binom{m-1}{k}\binom{n-m-1}{m-1-k}\\  &\qquad\quad\ \Big|\bE[\xi_{b, 1} \xi_{b, 2}\ \bar{h}_m(X_1, X_3, \dots, X_{m+1})\ \bar{h}_m(X_2,\underbrace{X_3,\dots, X_{k+2}}_{k\ \text{shared}}, X_{m+2},\dots, X_{2m-k})]\Big|\\
    & \hspace{10cm} \text{ by \lemref{counting_identities} $(ii)$}\\
    &\leq  \binom{n-2}{m-1}\sum_{k=1}^{m-1}\binom{m-1}{k}\binom{n-m-1}{m-1-k}  \bigg(\frac{9.5 \|g\|_3^2 \|h\|_3^2}{n} + \frac{2k\|h\|_2}{n^{3/2}}\bigg) \text{ by \lemref{ustat_results}$(iv)$}\\
    &=\binom{n-2}{m-1}\bigg\{\bigg[\binom{n-2}{m-1}    - \binom{n -m - 1}{m-1}\bigg] \frac{9.5 \|g\|_3^2 \|h\|_3^2}{n} 
    + \binom{n -3}{m -2} \frac{2 (m-1)\|h\|_2}{n^{3/2}} \bigg\},
        \end{align*}
        where in the last equality, we have used
        \begin{align*}
         \sum_{k=1}^{m-1}\binom{m-1}{k}\binom{n-m-1}{m-1-k} &=  \sum_{k=0}^{m-1}\binom{m-1}{k}\binom{n-m-1}{m-1-k}- \binom{n-m-1}{m-1}\\
         &= \binom{n-2}{m-1} - \binom{n-m-1}{m-1}\text{ by \lemref{counting_identities}$(i)$ }
        \end{align*}
        and
        \begin{align*}
        \sum_{k=1}^{m-1}\binom{m-1}{k}\binom{n-m-1}{m-1-k} k 
        &= (m-1)     \sum_{k=1}^{m-1}\binom{m-2}{k - 1}\binom{n-m-1}{m-1-k}  \\
        &= (m-1)    \sum_{k=0}^{m-2} \binom{m-2}{k }\binom{n-m-1}{m-2-k} \\
        &=  (m-1)    \binom{n -3}{m -2} \text{ by \lemref{counting_identities}$(i)$}
        \end{align*}
        Continuing, we get
        \begin{align}
        |EB| &\leq \binom{n-2}{m-1}\bigg\{ \binom{n-2}{m-1} \frac{(m-1)^2}{n-m}\frac{9.5 \|g\|_3^2 \|h\|_3^2}{n} 
    + \binom{n -3}{m -2} \frac{2 (m-1)\|h\|_2}{n^{3/2}} \bigg\} \notag\\
    & \hspace{8cm}\text{ by \lemref{counting_identities}$(iii)$} \notag\\
    &=   \binom{n-1}{m-1}^2 \bigg\{ \frac{9.5(m-1)^2(n-m) \|g\|_3^2 \|h\|_3^2}{ (n-1)^2n}
    +\frac{2(m-1)^2(n-m)^2\|h\|_2}{(n-1)^2(n-2)n^{3/2}} \bigg\}
   \notag \\
    & \hspace{8cm}\text{ by \eqref{enum_eq_1} and \eqref{enum_eq_3}} \notag\\
    &\leq C\binom{n-1}{m-1}^2 \frac{ m^2\|g\|_3^2 \|h\|_3^2}{n^2} \label{EB_final_bdd},
        \end{align}
   where the last line uses $2m < n$, and $1= \sigma_g \leq \|h\|_2 \leq \|h\|_3$.
   
      Lastly, for $EC$, in an analogous manner as $EA$ and $EB$, we first write it as 
        \begin{align*}
      &EC = \sum_{k=0}^{m-2}\binom{n-2}{k}\binom{n-2-k}{m-2-k}\binom{n-m}{m-2-k}\\
    & \hspace{1cm}\bE[\xi_{b, 1} \xi_{b, 2}\ \bar{h}_m(X_1, X_2, \dots, X_{m})\ \bar{h}_m(X_1, X_2,\underbrace{X_3,\dots, X_{k+2}}_{k\ \text{shared, empty if }k=0}, X_{m+1},\dots, X_{2m-k-2})].
        \end{align*}       
        Then we can bound
\begin{align*}
    |EC| &\leq \sum_{k=0}^{m-2}\binom{n-2}{k}\binom{n-2-k}{m-2-k}\binom{n-m}{m-2-k}\\
    &\qquad\quad\ \Big|\bE[\xi_{b, 1} \xi_{b, 2}\ \bar{h}_m(X_1, X_2, \dots, X_{m})\ \bar{h}_m(X_1, \dots, X_{k+2}, X_{m+1},\dots, X_{2m-k-2}]\Big|\\
    &\leq \binom{n-2}{m-2} \sum_{k=0}^{m-2}\binom{m-2}{k}\binom{n-m}{m-2-k} \\
    &\qquad\quad\ \Big|\bE[\xi_{b, 1} \xi_{b, 2}\ \bar{h}_m(X_1, X_2, \dots, X_{m})\ \bar{h}_m(X_1,\dots, X_{k+2}, X_{m+1},\dots, X_{2m-k-2}]\Big|\\
    & \hspace{9cm}\text{by \lemref{counting_identities}$(ii)$ } \\ 
    &\leq \binom{n-2}{m-2} \sum_{k=0}^{m-2} \binom{m-2}{k}\binom{n-m}{m-2-k}\bigg(\frac{9.5 \|g\|_3^2 \|h\|_3^2}{n} + \frac{2k\|h\|_2}{n}\bigg) \text{ by \lemref{ustat_results}$(iii)$}\\
    &= \binom{n-2}{m-2} \bigg\{ \binom{n-2}{m-2}\frac{9.5 \|g\|_3^2 \|h\|_3^2}{n} +   \binom{n-3}{m-3}\frac{2(m-2)\|h\|_2}{n}\bigg\},
    \end{align*}
where the last equality comes from
\[
 \sum_{k=0}^{m-2}  \binom{m-2}{k}\binom{n-m}{m-2-k} =  \binom{n-2}{m-2} \text{ by  \lemref{counting_identities}$(i)$}
\]
and for $m \geq 3$,
\begin{align*}
\sum_{k=0}^{m-2} \binom{m-2}{k}\binom{n-m}{m-2-k} k &= \sum_{k=1}^{m-2} \binom{m-2}{k}\binom{n-m}{m-2-k} k \\
&= (m-2)\sum_{k=1}^{m-2} \binom{m-3}{k-1}\binom{n-m}{m-2-k} \\
&= (m-2)\sum_{k=0}^{m-3} \binom{m-3}{k}\binom{n-m}{m-3-k} \\
&= (m-2) \binom{n-3}{m-3} \text{ by \lemref{counting_identities}$(i)$}. 
\end{align*}
Continuing, we get by \eqref{enum_eq_2} and \eqref{enum_eq_4},
\begin{align}
|EC| &\leq   \binom{n-2}{m-2} \bigg\{ \binom{n-2}{m-2}\frac{9.5 \|g\|_3^2 \|h\|_3^2}{n} +   \binom{n-3}{m-3}\frac{2(m-2)\|h\|_2}{n}\bigg\}\notag\\
&=  \binom{n-1}{m-1}^2 \bigg\{ \frac{9.5 (m-1)^2\|g\|_3^2 \|h\|_3^2}{n (n-1)^2} +\frac{2(m-1)^2(m-2)^2\|h\|_2}{n(n-1)^2(n-2)}\bigg\} \notag\\
&\leq C \binom{n-1}{m-1}^2  \bigg\{ \frac{m^2\|g\|_3^2 \|h\|_3^2}{n^3}  + \frac{m^4 \|h\|_2}{n^4}\bigg\} \label{EC_final_bdd}
\end{align}

Substituting \eqref{EA_final_bound}, \eqref{EB_final_bdd} and \eqref{EC_final_bdd} into \eqref{exp2_expand}, we get that
\begin{multline} \label{exp2_bdd_final}
\Bigg|
\bE
\bigg[
\bigg(\xi_{b, 1} \sum_{\substack{1 \leq i_1 < \dots < i_{m-1} \leq n\\ i_l \neq 1 \text{ for } l \in [m-1]}}  \bar{h}_{m, \{1, i_1, \dots, i_{m-1}\}}\bigg)  \bigg(
\xi_{b, 2} \sum_{\substack{1 \leq j_1 < \dots < j_{m-1} \leq n\\ j_l \neq 2 \text{ for } l \in [m-1]}}  \bar{h}_{m, \{2, j_1, \dots, j_{m-1}\}})
\bigg)\bigg]
\Bigg| \\
\leq C\binom{n-1}{m-1}^2  \frac{m^2\|g\|_3^2 \|h\|_3^2}{n^2}, 
\end{multline}
where we have used that $2m < n$ and $1 = \|g\|_2 \leq \|h\|_2 \leq \|h\|_3$. Finally, collecting \eqref{Pi2_first_bdd}, \eqref{exp1}, \eqref{exp2},  \eqref{exp1_bdd_final} and \eqref{exp2_bdd_final}, we obtain \eqref{Pi22_bdd}.

\subsection{Proof of  \lemref{Djn_minus_D_ni}$(ii)$}  Note that
\[
\delta_{2n, b} -  \delta_{2n, b}^{(i)} = A + B,  
\]
where
\[
A = \frac{2 (n-1) }{\sqrt{n}(n-m)} {n-1 \choose m-1}^{-1} \xi_{b, i} \sum_{\substack{1 \leq i_1 < \dots < i_{m-1} \leq n \\
i_l \neq i \text{ for } l \in [m-1] }} \bar{h}_m (X_i, X_{i_1}, \dots, X_{i_{m-1}})
\]
and
\[
B = \frac{2 (n-1) }{\sqrt{n}(n-m)} {n-1 \choose m-1}^{-1} \sum_{\substack{1  \leq j \leq n \\ j \neq i}} \Bigg(\xi_{b, j} \sum_{\substack{1 \leq i_1 < \dots < i_{m-2} \leq n\\ i_l \neq j, i \text{ for } l = 1, \dots, m-2}}
\bar{h}_m (X_j, X_i, X_{i_1}, \dots, X_{i_{m-2}}) \Bigg).
\]
From \eqref{Pi2_for_ustat} and \eqref{Pi2_for_ustati_def}, we first write 
\begin{align} 
\|\Pi_2 - \Pi_2^{(i)} \|_2 &\leq  \bE[ (\xi_i^2 - 1)I(|\xi_i|> 1) ] +   \|\delta_{2n, b} -  \delta_{2n, b}^{(i)}\|_2 \notag\\
&\leq  \frac{\bE[g^2]}{n} + \| A\|_2 + \|B\|_2, \label{Djn_minus_D_ni_first_bdd}
\end{align}
by \lemref{exp_xi_bi_bdd}, where
\[
A = \frac{2 (n-1) }{\sqrt{n}(n-m)} {n-1 \choose m-1}^{-1} \xi_{b, i} \sum_{\substack{1 \leq i_1 < \dots < i_{m-1} \leq n \\
i_l \neq i \text{ for } l \in [m-1] }} \bar{h}_m (X_i, X_{i_1}, \dots, X_{i_{m-1}})
\]
and
\[
B = \frac{2 (n-1) }{\sqrt{n}(n-m)} {n-1 \choose m-1}^{-1} \sum_{\substack{1  \leq j \leq n \\ j \neq i}} \Bigg(\xi_{b, j} \sum_{\substack{1 \leq i_1 < \dots < i_{m-2} \leq n\\ i_l \neq j, i \text{ for } l = 1, \dots, m-2}}
\bar{h}_m (X_j, X_i, X_{i_1}, \dots, X_{i_{m-2}}) \Bigg).
\]
So we will bound $\|A\|_2$ and $\|B\|_2$, which is trivial for $m=1$ as $\bar{h}_1(\cdot) = 0$. For $m \geq 2$, by \lemref{ustat_results}$(ii)$,
\begin{multline}
\bE[A^2] \leq \frac{4 (n-1)^2 }{n(n-m)^2} {n-1 \choose m-1}^{-2}
\bE\Bigg[\Bigg( \sum_{\substack{1 \leq i_1 < \dots < i_{m-1} \leq n  \\
i_l \neq i \text{ for } l \in [m-1] }} \bar{h}_m (X_i, X_{i_1}, \dots, X_{i_{m-1}})\Bigg)^2\Bigg] \\
\leq  \frac{8 (n-1)^2 (m-1)^2 \bE[h^2]}{(n-m)^2n(n-m+1)m }    \leq C \frac{m\bE[h^2] }{n^2} \label{A_sq_final_bdd}
\end{multline}
Moreover, for $B$, we first expand its second moment as
\begin{align}
&\bE[B^2] \notag\\
&= \frac{4 (n-1)^2 }{n(n-m)^2} {n-1 \choose m-1}^{-2} \bE\Bigg[ \Bigg( \sum_{\substack{1  \leq j \leq n \\ j \neq i}} \Bigg(\xi_{b, j} 
\sum_{
\substack{1 \leq i_1 < \dots < i_{m-2} \leq n\\ i_l \neq j, i \text{ for } l = 1, \dots, m-2}
}
\bar{h}_{m, \{j, i, i_1, \dots, i_{m-2}\}} \Bigg)\Bigg)^2\Bigg] \notag\\
&=  \frac{4 (n-1)^2 }{n(n-m)^2} {n-1 \choose m-1}^{-2} \times \notag\\
& 
\quad \Bigg\{ (n-1)
\underbrace{
\sum_{
\substack{
 1 \leq i_1 < \dots < i_{m-2} \leq m-2\\ 
 1 \leq j_1 < \dots < j_{m-2} \leq m-2\\
 i_l, j_l \neq 1, 2 \text{ for } l \in [m-2]
}}
\bE[\xi_{b, 1}^2  \bar{h}_{m, \{1, 2, i_1, \dots, i_{m-2}\}} \bar{h}_{m, \{1, 2, j_1, \dots, j_{m-2}\}}] 
}_{\equiv ED}
+
\notag\\
&\quad (n-1)(n-2) 
\underbrace{
\sum_{\substack{1 \leq i_1 < \dots < i_{m-2} \leq n \\ 1 \leq j_1 < \dots < j_{m-2} \leq n \\
i_l \neq 1, 3 \text{ for } l \in [m-2]\\  j_l \neq 2, 3 \text{ for } l \in [m-2]}} 
\bE[\xi_{b, 1} \xi_{b, 2} \bar{h}_{m, \{1, 3, i_1, \dots, i_{m-2}\}}  \bar{h}_{m, \{2, 3, j_1, \dots, j_{m-2}\}}]
}_{\equiv EE}
\Bigg\} \label{B_sq_expansion}.
\end{align}
To bound $ED$, we first note that, by $|\xi_{b, 1}| \leq 1$, H\"older's inequality and \lemref{ustat_results}$(i)$, each of its summand can be bounded as
\begin{equation} \label{first_part_B_sq}
\Big|\bE[\xi_{b, 1}^2 \bar{h}_{m, \{1, 2, i_1, \dots, i_{m-2}\}} \bar{h}_{m, \{1, 2, j_1, \dots, j_{m-2}\}}]\Big| \leq \bE[h^2]
\end{equation}
Then, by considering the number of elements $k \in [m-2]$ shared by the sets $\{i_1, \dots, i_{m-2}\}$ and $\{j_1, \dots, j_{m-2}\}$ indexing each such summand, we have the bound
\begin{align}
|ED| &\leq  \sum_{k =0}^{m-2} \binom{n-2}{k}\binom{n-2-k}{m-2-k} \binom{n-m}{m-2-k} \bE[h^2] \notag\\
&= \binom{n-2}{m-2} \sum_{k =0}^{m-2} \binom{m-2}{k} \binom{n-m}{m-2-k} \bE[h^2] \text{ by \lemref{counting_identities}$(ii)$} \notag\\
&= \binom{n-2}{m-2}^2 \bE[h^2] \text{ by \lemref{counting_identities}$(i)$} \notag\\
&= \binom{n-1}{m-1}^2 \bigg(\frac{m-1}{n-1}\bigg)^2  \bE[h^2]\text{ by \eqref{enum_eq_2}} \label{ED_final_bdd}.
\end{align}
To bound $EE$, we  first break it down as
\begin{align}
&EE = \label{EE_decomposition}\\
&\underbrace{
\sum_{\substack{1 \leq i_1 < \dots < i_{m-2} \leq n \\ 1 \leq j_1 < \dots < j_{m-2} \leq n \\
i_l \neq 1, 2, 3 \text{ for } l \in [m-2]\\  j_l \neq 1, 2, 3 \text{ for } l \in [m-2]}}  
\bE[\xi_{b, 1} \xi_{b, 2}\bar{h}_{m, \{1,  3,  i_1, \dots, i_{m-2}\}} \bar{h}_{m, \{2, 3, j_1, \dots, j_{m-2}\}}}_{\equiv EE_1}\notag\\
&+\underbrace{
\sum_{\substack{1 \leq i_1 < \dots < i_{m-3} \leq n \\ 1 \leq j_1 < \dots < j_{m-2} \leq n \\
i_l \neq 1, 2, 3 \text{ for } l \in [m-3]\\  j_l \neq 1, 2, 3 \text{ for } l \in [m-2]}}  \bE[\xi_{b, 1}\xi_{b, 2} \bar{h}_{m, \{1, 2, 3, i_1, \dots, i_{m-3}\}} \bar{h}_{m, \{2, 3, j_1, \dots, j_{m-2}\}}] }_{\equiv EE_2} \notag\\
&+ 
\underbrace{
\sum_{\substack{1 \leq i_1 < \dots < i_{m-2} \leq n \\ 1 \leq j_1 < \dots < j_{m-3} \leq n \\
i_l \neq 1, 2, 3 \text{ for } l \in [m-2]\\  j_l \neq 1, 2, 3 \text{ for } l \in [m-3]}}  
\bE[\xi_{b, 1} \xi_{b, 2} \bar{h}_{m, \{1, 3,  i_1, \dots, i_{m-2}\}} \bar{h}_{m, \{1, 2, 3, j_1, \dots, j_{m-3}\}}]}_{\equiv EE_3} \notag\\
&+ 
\underbrace{
\sum_{\substack{1 \leq i_1 < \dots < i_{m-3} \leq n \\ 1 \leq j_1 < \dots < j_{m-3} \leq n \\
i_l \neq 1, 2, 3 \text{ for } l \in [m-3]\\  j_l \neq 1, 2, 3 \text{ for } l \in [m-3]}}  
\bE[\xi_{b, 1} \xi_{b, 2} \bar{h}_{m, \{1, 2, 3,  i_1, \dots, i_{m-3}\}} \bar{h}_{m, \{1, 2, 3, j_1, \dots, j_{m-3}\}} }_{\equiv EE_4}. \notag
\end{align}
Using \lemref{ustat_results}$(iv)$, one can then bound $EE_1$ as
\begin{align}
&|EE_1| \notag\\
 &\leq \sum_{k=0}^{m-2} \binom{n-3}{k} \binom{n-3-k}{m-2-k} \binom{n-1 -m}{m-2-k} \bigg( \frac{9.5 \|g\|_3^2 \|h\|_3^2}{n} + \frac{2d\|h\|_2}{n^{3/2}}\bigg) \notag\\
&\leq \sum_{k=0}^{m-2} \binom{n-3}{k} \binom{n-3-k}{m-2-k} \binom{n-1 -m}{m-2-k} \bigg( \frac{9.5 \|g\|_3^2 \|h\|_3^2}{n} + \frac{2\|h\|_2^2}{n}\bigg)\notag\\
& \hspace{7cm} \text{ by \eqref{extract_extra_m} and $d \leq m \leq n$} \notag\\
&\leq  11.5 \binom{n-3}{m-2} \sum_{k=0}^{m-2} \binom{m-2}{k} \binom{n-1 -m}{m-2-k}  \frac{\|g\|_3^2 \|h\|_3^2}{n} \notag\\
& \hspace{6cm}\text{ by \lemref{counting_identities}$(ii)$ and  $\|h\|_2 \leq \|h\|_3$}\notag\\
&= 11.5 \binom{n-3}{m-2}^2 \frac{\|g\|_3^2 \|h\|_3^2}{n} \text{ by \lemref{counting_identities}$(i)$}  \notag\\
&=  11.5 \binom{n-1}{m-1}^2 \frac{(m-1)^2(n-m)^2\|g\|_3^2 \|h\|_3^2}{n(n-1)^2(n-2)^2 } \text{ by \eqref{enum_eq_3}}  \label{EE1_final_bdd}.
\end{align}
For $EE_2$ and $EE_3$, using \lemref{ustat_results}$(iii)$, one can bound them similarly as
\begin{align} 
&\max(|EE_2|, |EE_3|)  \notag\\
&\leq \sum_{k =0}^{m-3} \binom{n-3}{k} \binom{n-3-k}{m-3-k} \binom{n-m}{m-2-k}\bigg(  \frac{9.5 \|g\|_3^2 \|h\|_3^2}{n}+ \frac{2(2+k)\|h\|_2}{n}\bigg)\notag\\
&= 
 \binom{n-3}{m-3}  \sum_{k =0}^{m-3}\binom{m-3}{k} \binom{n-m}{m-2-k} \bigg(  \frac{9.5 \|g\|_3^2 \|h\|_3^2}{n}+ \frac{2(2+k)\|h\|_2}{n}\bigg)\text{ by \lemref{counting_identities}$(ii)$} \notag\\
&= 
\binom{n-3}{m-3} \binom{n-3}{m-2} \frac{9.5\|g\|_3^2\|h\|_3^2 + 4 \|h\|_2}{n} \notag\\
& \hspace{1cm}
+ 
 \binom{n-3}{m-3} \sum_{k =1}^{m-3}  \binom{m-4}{k-1} \binom{n-m}{m-2-k} \frac{2 (m-3)\|h\|_2 }{n}  \text{ by \lemref{counting_identities}$(i)$}
\notag\\
&= 
\binom{n-3}{m-3}\bigg\{ \binom{n-3}{m-2} \frac{9.5\|g\|_3^2\|h\|_3^2 + 4 \|h\|_2}{n}
+ 
\sum_{k =0}^{m-4}  \binom{m-4}{k} \binom{n-m}{m-3-k} \frac{2 (m-3)\|h\|_2 }{n}  \bigg\} \notag\\
&= 
\binom{n-3}{m-3}\bigg\{ \binom{n-3}{m-2} \frac{9.5\|g\|_3^2\|h\|_3^2 + 4 \|h\|_2}{n}
+ 
\binom{n-4}{m-3} \frac{2 (m-3)\|h\|_2 }{n}  \bigg\} \text{by \lemref{counting_identities}$(i)$}\notag\\
&= 
\binom{n-3}{m-3}\bigg\{ \binom{n-3}{m-2} \frac{9.5\|g\|_3^2\|h\|_3^2 + 4 \|h\|_2}{n}
+ 
\binom{n-3}{m-3} \frac{2 (m-3)(n-m)\|h\|_2 }{(n-3)n}  \bigg\}\notag\\
&= \binom{n-1}{m-1}^2 \bigg\{ \frac{(m-1)^2(m-2)(n-m)(9.5\|g\|_3^2\|h\|_3^2 + 4 \|h\|_2)}{(n-1)^2(n-2)^2 n} \notag\\
&\hspace{4cm}
+ \frac{2(m-1)^2(m-2)^2(m-3)(n-m)\|h\|_2}{(n-1)^2(n-2)^2(n-3)n}
\bigg\} \text{ by \eqref{enum_eq_3} and \eqref{enum_eq_4}}\notag\\
&\leq C \binom{n-1}{m-1}^2 \bigg\{ \frac{m^3 \|g\|_3^2\|h\|_3^2 }{n^4} + \frac{ m^5\|h\|_2}{n^5} \bigg\}
\text{by $1 \leq \|g\|_3$ and $\|h\|_2 \leq \|h\|_3$}. \label{EE2_EE3_final_bdd}
 \end{align}
Lastly, for $EE_4$, using \lemref{ustat_results}$(iii)$, one can bound it as
\begin{align}
&|EE_4| \notag\\
&\leq  \sum_{k =0}^{m-3} {n -3 \choose k} {n -3  - k \choose m - 3 -  k}  {n -m \choose m - 3 -  k}  \bigg(  \frac{9.5 \|g\|_3^2 \|h\|_3^2}{n}+ \frac{2(3+k)\|h\|_2}{n}\bigg) \notag\\
&= {n -3 \choose m -3} \sum_{k =0}^{m-3} {m-3 \choose k} {n -m \choose m - 3 -  k} \bigg(  \frac{9.5 \|g\|_3^2 \|h\|_3^2}{n}+ \frac{2(3+k)\|h\|_2}{n}\bigg) \text{ by \lemref{counting_identities} $(ii)$} \notag\\
&= {n -3 \choose m -3}
 \bigg\{ 
{n-3 \choose m-3}\frac{9.5 \|g\|_3^2 \|h\|_3^2 + 6 \|h\|_2}{n} +
\frac{2 (m-3)\|h\|_2}{n}
\sum_{k =1}^{m-3} {m-4 \choose k-1} {n-m \choose m - 4 - (k-1)}
\bigg\}\notag\\
&= {n -3 \choose m -3}
 \bigg\{ 
{n-3 \choose m-3}\frac{9.5 \|g\|_3^2 \|h\|_3^2 + 6 \|h\|_2}{n} +
\frac{2 (m-3)\|h\|_2}{n} {n -4 \choose m - 4} \bigg\} \text{ by \lemref{counting_identities} $(i)$} \notag\\
&\leq C{n -1 \choose m-1}^2 \bigg\{ \frac{ m^4 \|g\|_3^2 \|h\|_3^2  }{n^5 } +
\frac{m^6 \|h\|_2}{n^6}
\bigg\} \text{by \eqref{enum_eq_4}, \eqref{enum_eq_5}, $1 \leq \|g\|_3$ and $\|h\|_2 \leq \|h\|_3$}. \label{EE4_final_bdd}
\end{align}
Combining \eqref{EE_decomposition}, \eqref{EE1_final_bdd}, \eqref{EE2_EE3_final_bdd}, \eqref{EE4_final_bdd} and $2m <n$, we get that
\begin{equation}
 \label{EE_final_bdd}
|EE| \leq C {n-1 \choose m-1}^2 \bigg\{ \|g\|_3^2 \|h\|_3^2 \bigg(\frac{m^2}{n^3} + \frac{m^3}{n^4} + \frac{m^4}{n^5}\bigg) + \|h\|_2 \bigg(\frac{m^5}{n^5}  \bigg) \bigg\}.
\end{equation}
Combining \eqref{B_sq_expansion}, \eqref{ED_final_bdd} and \eqref{EE_final_bdd}, we get
\begin{align} \bE[B^2] &
\leq 
 C   
  \bigg\{\frac{m^2}{n^2} \bE[h^2] + \bigg[ \|g\|_3^2 \|h\|_3^2 \bigg(\frac{m^2}{n^2} + \frac{m^3}{n^3} + \frac{m^4}{n^4}\bigg) + \|h\|_2 \bigg(\frac{m^5}{n^4}  \bigg) \bigg] \bigg\} \notag\\
  &\leq  
   C   
  \bigg\{ \frac{m^2 \|g\|_3^2 \|h\|_3^2}{n^2}  +\frac{m^5  \|h\|_2 }{n^4}   \bigg\},
  \label{B_sq_final_bdd}
\end{align}
where we have used $2m < n$, as well as $\|h\|_2 \leq \|h\|_3$ and $1 = \|g\|_2 \leq \|g\|_3$ in the last line. Combining \eqref{Djn_minus_D_ni_first_bdd}, \eqref{A_sq_final_bdd} and \eqref{B_sq_final_bdd} gives \lemref{Djn_minus_D_ni}$(ii)$. 

\section{Proof of \lemsref{ustat_results} and \lemssref{counting_identities}}
\subsection{Proof of \lemref{ustat_results} } \label{app:pf_of_ustat_results}

The proof for $(i)$ and $(ii)$ can be found in \citet[Ch.10, Appendix]{chen2011normal}. We will focus on proving $(iii)$ and $(iv)$. For any subset $ \{i_1, \dots, i_k\} \subset [n]$, we will denote
\[
X_{\{i_1, \dots, i_k\}} = \{X_{i_1}, \dots, X_{i_k}\}.
\] 
To simplify the notation, we also denote 
\[
I = \{i_1, \dots, i_{k_1}\} \text{ and } J = \{j_1, \dots, j_{k_2}\},
\]
as well as
\[
h_I  = h_{k_1}(X_{i_1}, \dots, X_{i_{k_1}}) \text{ and } \bar{h}_I  = \bar{h}_{k_1, \{i_1, \dots, i_{k_1}\} }
\]
and
\[
h_J  = h_{k_2}(X_{j_1}, \dots, X_{j_{k_2}}) \text{ and } \bar{h}_I  = \bar{h}_{k_2, \{j_1, \dots, i_{j_2}\} }.
\]
First, it suffices to assume both 
\begin{equation*}\label{working_assumption_1}
k_1, k_2 \geq 2
\end{equation*}
 because if any of $k_1$ and $k_2$ is equal to $1$, then one of $\bar{h}_{k_1, \{ i_1, \dots, i_{k_1}\}}$ and $\bar{h}_{k_2, \{j_1, \dots, j_{k_2}\}}$ must be equal to zero by the definition in  \eqref{h_k_def}, so the bound is trivial. Moreover, one can further assume without loss of generality that the index sets $I$ and $J$ are such that 
 \begin{equation}\label{working_assumption_2}
I \backslash \{1, 2\} =  J \backslash \{1, 2\}  = [3: (d+2)] \text{ if } d >0,
 \end{equation}
 in which case it must be true that $|I \backslash \{1, 2\}| = |J \backslash \{1, 2\} | = d$. 
This is because for any $I$ and $J$, we have
\begin{align*}
&\bE[ \xi_{b, 1} \xi_{b, 2}\bar{h}_I  \bar{h}_J   ] \\
&=  \bE\Big[\bE[ \xi_{b, 1} \xi_{b, 2}\bar{h}_I  \bar{h}_J  \mid X_{\{1, 2\}\cup (I \cap J)}  ] \Big]\\
&=  \bE\Big[\xi_{b, 1} \xi_{b, 2} \bE[ \bar{h}_I  \bar{h}_J  \mid X_{\{1, 2\}\cup (I \cap J)}  ] \Big] \\
&=  \bE\Big[\xi_{b, 1} \xi_{b, 2} \bE[ \bar{h}_I    \mid X_{\{1, 2\}\cup (I \cap J)}  ] \bE[\bar{h}_J \mid X_{\{1, 2\}\cup (I \cap J)}]\Big]\\
& \hspace{1cm} \text{ because $I \backslash \Big(\{1, 2\}\cup (I \cap J)\Big)$ and $J \backslash \Big( \{1, 2\}\cup (I \cap J)\Big)$ are disjoint}\\
&=\bE\Big[\xi_{b, 1} \xi_{b, 2} \bE[ \bar{h}_I    \mid X_{(I \cap\{1, 2\})\cup (I \cap J)}  ] \bE[\bar{h}_J \mid X_{(J\cap\{1, 2\})\cup (I \cap J)}]\Big] \\
&= \bE\Big[\xi_{b, 1} \xi_{b, 2}  \bar{h}_{(I \cap\{1, 2\})\cup (I \cap J)}   \bar{h}_{(J\cap\{1, 2\})\cup (I \cap J)}\Big].
\end{align*}
Since 
\[
\Big((I \cap\{1, 2\})\cup (I \cap J) \Big)\backslash \{1, 2\} = (I \cap J) \backslash \{1, 2\} = \Big((J \cap\{1, 2\})\cup (I \cap J) \Big)\backslash \{1, 2\}
\]
 and 
 \[
 |(I \cap J) \backslash \{1, 2\}| = d \text{ by assumption},
 \] by the i.i.d.'ness of the data $X_1, \dots, X_n$ it suffices to assume \eqref{working_assumption_2}.

By the definition in \eqref{h_k_def},   we  perform the expansion
\begin{align*}
    &\bE[\xi_{b,1}\xi_{b,2}\ \bar{h}_{I}\ \bar{h}_{J}]\\
    &= \bE\Big[\xi_{b,1}\xi_{b,2}\Big(h_{I} - \sum_{i\in I\cap\{1,2\}}g(X_i)- \sum_{i\in I\setminus\{1,2\}}g(X_i)\Big)\Big(h_{J} -\sum_{j\in J\cap\{1,2\}}g(X_j)- \sum_{j\in J\setminus\{1,2\}}g(X_j)\Big)\Big]\\
    &= \underbrace{\bE[\xi_{b,1}\xi_{b,2}\ h_{I}\ h_{J}]}_{\equiv HH} \\
    &\quad - \underbrace{\sum_{i\in I\cap \{1,2\}}\bE[\xi_{b,1}\xi_{b,2}g(X_i)\ h_{J}]}_{\equiv GH_{1}} - \underbrace{\sum_{j\in J\cap \{1,2\}}\bE[\xi_{b,1}\xi_{b,2}g(X_j)\ h_{I}]}_{\equiv GH_{2}}\\
    &\quad - \underbrace{ \sum_{ i \in I \backslash \{1, 2\} }\bE[\xi_{b,1}\xi_{b,2}g(X_i)\ h_J]}_{\equiv GH_{3}} - \underbrace{\sum_{ j \in J \backslash \{1, 2\} }\bE[\xi_{b,1}\xi_{b,2}g(X_j)\ h_I]}_{\equiv GH_{4}}\\
    &\quad + \underbrace{\sum_{i\in I\cap \{1,2\}}\sum_{j\in J\cap \{1,2\}}\bE[\xi_{b,1}\xi_{b,2}g(X_i)g(X_j)]}_{\equiv GG_1} + \underbrace{ \sum_{i \in I \backslash \{1, 2\}} \sum_{j \in J \backslash \{1, 2\}} \bE[\xi_{b,1}\xi_{b,2} g(X_i) g(X_j))]}_{\equiv GG_2}, 
\end{align*}
recognizing that the last batch of expansion terms 
\[
\sum_{i\in I\cap \{1,2\}} \sum_{j\in J\backslash \{1,2\}}\underbrace{\bE[\xi_{b,1}\xi_{b,2}g(X_i)g(X_j)]}_{=\bE[\xi_{b,1}\xi_{b,2}g(X_i)]\bE[g(X_j)] = 0} + \sum_{i\in I\backslash \{1,2\}} \sum_{j\in J\cap \{1,2\}}\underbrace{\bE[\xi_{b,1}\xi_{b,2}g(X_i)g(X_j)]}_{ \bE[\xi_{b,1}\xi_{b,2}g(X_j)]\bE[g(X_i)]=0}
\]
vanish. The remaining  terms in each row of the expansion above are bounded as follows:

\subsubsection{Bound on $HH$:}
\begin{align}
    |HH| &= \Big| \bE[\xi_{b, 1} \xi_{b, 2}\ h_{I}\ h_{J}]\Big| \leq \Big\|\xi_{b, 1} \xi_{b,2}\Big\|_3 \Big\| h_{I}\ h_{J}\Big\|_{3/2} \notag\\
&=\Big(\bE[|\xi_{b, 1} |^3] \bE[|\xi_{b, 2} |^3]\Big)^{1/3} 
\Big( \bE\Big[ \Big| h_{I}\Big|^{3/2} \Big| h_{J}\Big|^{3/2}\Big]\Big)^{2/3} \notag\\
&\leq  \Big(\bE[|\xi_{b, 1} |^3] \bE[|\xi_{b, 2} |^3]\Big)^{1/3}   \left(\| |h_{I}|^{3/2}\|_2 \| |h_{J}|^{3/2}\|_2 \right)^{2/3}     \text{ by Cauchy's inequality}\notag\\
&\leq n^{-1}\|g\|_3^2 \|h\|_3^2, \label{HH_final_bdd}
\end{align}
where the last line come from \eqref{Jensen} with $|I|\vee |J| \leq m$.

\subsubsection{Bound on $GH_1 + G H_2$:}
\begin{align}
    &|GH_1+GH_2|  \notag\\
    &\leq \sum_{i \in I \cap \{1, 2\}} \| \xi_{b,1}\xi_{b,2}g(X_i) \|_{3/2} \| h_J \|_{3} + \sum_{j \in J \cap \{1, 2\}} \| \xi_{b,1}\xi_{b,2}g(X_j) \|_{3/2} \| h_I \|_{3} \notag\\
    &= |I \cap \{1, 2\}| \cdot \| \xi_{b,1}\xi_{b,2}g(X_1) \|_{3/2} \| h_J \|_{3} + |J \cap \{1, 2\}| \cdot  \| \xi_{b,1}\xi_{b,2}g(X_1) \|_{3/2} \| h_I \|_{3} \notag\\
    &\leq 4  \| \xi_{b,1}\xi_{b,2}g(X_1) \|_{3/2}  \|h\|_3  \quad \text{ by } \eqref{Jensen} \notag\\
    &= 4 \|\xi_{b, 1} g(X_1)\|_{3/2}\|\xi_{b, 2}\|_{3/2} \|h\|_3 \quad \text{ by independence } \notag \\
    &\leq 4 \Big(\bE[n^{-3/4}|g(X_1)|^3]\Big)^{2/3}\Big(\bE[n^{-3/4}|g(X_2)|^{3/2}]\Big)^{2/3} \|h\|_3 \notag\\
    &= 4n^{-1} \|g\|_3^2 \|g\|_{3/2} \|h\|_3 \notag\\
    &\leq 4n^{-1}\|g\|_3^2 \|h\|_3, \label{GH_1_plus_G H_2_final_bdd} 
\end{align}
where the last inequality is true because  $\|g\|_{3/2}  \leq \|g\|_2  = \sigma_g = 1$.

\subsubsection{General bound on $GH_3  + G H_4$:}

\begin{align}
   & |GH_3+GH_4| \notag\\
   & \leq  \sum_{ i \in I \backslash \{1, 2\} } \|\xi_{b,1}\xi_{b,2}g(X_i)\|_2 \| h_J\|_2+ \sum_{ j \in J \backslash \{1, 2\} } \|\xi_{b,1}\xi_{b,2}g(X_j)\|_2 \| h_I\|_2 \notag\\
    &= |I \backslash \{1, 2\}| \cdot \|\xi_{b,1}\xi_{b,2}g(X_3)\|_2 \|h_J\|_2 + |J \backslash \{1, 2\}| \cdot \|\xi_{b,1}\xi_{b,2}g(X_3)\|_2 \|h_I\|_2 \notag \\
    &\leq 2d \|\xi_{b,1}\xi_{b,2}g(X_3)\|_2 \|h\|_2 \text{ by } \eqref{Jensen} \text{ and } \eqref{working_assumption_2} \notag\\
    &\leq 2 d\|\xi_1\|_2 \|\xi_2\|_2  \|g(X_3)\|_2   \|h\|_2 \text{ by independence } \notag\\
    &= 2dn^{-1}\|h\|_2 \text{ by  \eqref{unit_var}}.  \label{GH_3_plus_GH_4_final_general_bdd}
\end{align}

\subsubsection{Special bound on $GH_3  + G H_4$ under $1 \not \in J$ and $2 \not \in I$:}
\begin{align}
    &|GH_3+GH_4| \notag\\
    &= \bigg|\sum_{ i \in I \backslash \{1, 2\} }\bE[\xi_{b,1}] \bE[\xi_{b,2}g(X_i)\ h_J] + \sum_{ j \in J \backslash \{1, 2\} }\bE[\xi_{b,2} ] \bE[\xi_{b,1}g(X_j)\ h_I] \bigg| \notag\\
    & \hspace{8cm}\text{ by $1 \not \in J$ and $2 \not \in I$} \notag\\
    &\leq  \sum_{ i \in I \backslash \{1, 2\} } \big|\bE[\xi_{b,1}]\big| \ \|\xi_{b,2}g(X_i)\|_2 \|h\|_2 + \notag\\
    &\hspace{2cm} \sum_{ j \in J \backslash \{1, 2\} } \big|\bE[\xi_{b,2} ] \big| \  \|\xi_{b,1}g(X_j)\|_2 \|h\|_2 \text{ by } \eqref{Jensen}  \notag\\
    &\leq  2d  \cdot \big|\bE[\xi_{b,1}]\big| \ \|\xi_{b,1}g(X_3)\|_2 \|h\|_2  \text{ by }\eqref{working_assumption_2} \notag\\
    &\leq 2d \bE[\xi_1^2]  \ \|\xi_1\|_2 \ \|g(X_3)\|_2 \|h\|_2 \text{ by \lemref{exp_xi_bi_bdd} and independence} \notag\\
    &= 2d n^{-3/2} \|h\|_2  \text{ by $\sigma_g^2 =1$ in \eqref{unit_var}}. \label{GH_3_plus_GH_4_final_special_bdd}
\end{align}

\subsubsection{Bound on $GG_1 + GG_2$}
\begin{align*}
    &|GG_1+GG_2| \\
    &\leq 2 \Big( \bE[|\xi_{b,1}g^2(X_1)|] \cdot  |\bE[\xi_{b,2}]| + \bE[|\xi_{b,1}g(X_1)|]\cdot \bE[|\xi_{b,2}g(X_2)|] \Big)\\
    & \hspace{6cm}+ \Big|\sum_{i \in I \backslash \{1, 2\}} \sum_{j \in J \backslash \{1, 2\}} \bE[\xi_{b,1}\xi_{b,2} g(X_i) g(X_j))] \Big|\\ 
    &=  2 \Big( \bE[|\xi_{b,1}g^2(X_1)|] \cdot  |\bE[\xi_{b,2}]| + \bE[|\xi_{b,1}g(X_1)|]\cdot \bE[|\xi_{b,2}g(X_2)|] \Big) \\
   & \hspace{6cm} + d\cdot |\bE[\xi_{b, 1}] | \cdot  |\bE[ \xi_{b, 2}]|\cdot \bE[ g^2(X_3)],
\end{align*}
where the last equality uses that 
\[
\bE[\xi_{b,1}\xi_{b,2} g(X_i) g(X_j))] = \bE[\xi_{b,1}\xi_{b,2}]  \bE[ g(X_i)] \bE[ g(X_j))] = 0 \text{ if } i \neq j \text{ and } i, j \not \in \{1, 2\},
\]
  as well as the working assumption in \eqref{working_assumption_2}. Continuing, we get
\begin{align}
&|GG_1+GG_2| \notag\\
 &\leq  2 \Big( \bE[g^2(X_1)] \cdot  |\bE[\xi_{b,2}]| +  n^{-1}\bE[g^2(X_1)]\cdot \bE[g^2(X_2)] \Big) + d\cdot |\bE[\xi_{b, 1}] | \cdot  |\bE[ \xi_{b, 2}]|\cdot \bE[ g^2(X_3)] \notag\\
&\leq  2 (n^{-1} + n^{-1} ) +  d n^{-2}  \text{ by \lemref{exp_xi_bi_bdd} and } \bE[g(X_1^2)] = 1 \text{ in \eqref{unit_var} } \notag\\
&\leq 4n^{-1} + \frac{d}{2m} n^{-1} \text{ by $2m < n$} \notag\\
& \leq 4.5 n^{-1} \text{ by $d \leq m$}\label{GG_1_plus_GG_2_final_bdd}.
\end{align}

\subsubsection{Summary} Recall $1 = \sigma_g \leq \|g\|_3 \leq \|h\|_3$. Combining \eqref{HH_final_bdd}, \eqref{GH_1_plus_G H_2_final_bdd}, \eqref{GH_3_plus_GH_4_final_general_bdd}, \eqref{GG_1_plus_GG_2_final_bdd}  gives \lemref{ustat_results}$(iii)$, and combining \eqref{HH_final_bdd}, \eqref{GH_1_plus_G H_2_final_bdd}, \eqref{GH_3_plus_GH_4_final_special_bdd}, \eqref{GG_1_plus_GG_2_final_bdd} gives \lemref{ustat_results}$(iv)$.

\subsection{Proof of \lemref{counting_identities}} \label{app:pf_of_counting_identities}

Statement $(i)$ is the Vandermonde's identity, which counts the number of ways to choose $m$ balls from $n_1$ red balls and $n_2$ green balls,  by summing over $k \in [0:m]$ the number of ways to choose $k$ red balls and $m-k$ green balls.  Statement $(ii)$ counts the number of ways to choose $m$ balls out of a bag of $n$ balls and paint $k$ of the $m$ chosen balls as red, in two different ways. Statement $(iii)$ comes from 
\begin{align*}
\binom{a}{b} - \binom{a -e}{b} &= \binom{a}{b}  \bigg( 1 - \frac{(a-e) \dots (a -e -b +1)}{a \cdots (a-b+1)} \bigg)\\
&  = \binom{a}{b} \bigg( 1 - \prod_{j = a - b+1}^{a} \Big(1 - \frac{e}{j} \Big)  \bigg) \\
&\leq  \binom{a}{b}  \sum_{j = a-b+1}^a \frac{e}{j}\\
&\leq   \binom{a}{b} \frac{ b e}{a - b +1}.
\end{align*}

\bibliographystyle{ba}

\bibliography{student_Ustat_BE_unif.bib}

\begin{thebibliography}{15}
\newcommand{\enquote}[1]{``#1''}
\expandafter\ifx\csname natexlab\endcsname\relax\def\natexlab#1{#1}\fi
\expandafter\ifx\csname url\endcsname\relax
  \def\url#1{{\tt #1}}\fi
\expandafter\ifx\csname urlprefix\endcsname\relax\def\urlprefix{URL }\fi

\bibitem[{Arvesen(1969)}]{arvesen1969jackknifing}
Arvesen, J.~N. (1969).
\newblock \enquote{Jackknifing U-statistics.}
\newblock {\em The Annals of Mathematical Statistics\/}, 40(6): 2076--2100.

\bibitem[{Callaert and Veraverbeke(1981)}]{callaert1981order}
Callaert, H. and Veraverbeke, N. (1981).
\newblock \enquote{The order of the normal approximation for a studentized
  U-statistic.}
\newblock {\em The Annals of Statistics\/}, 194--200.

\bibitem[{Chen et~al.(2011)Chen, Goldstein, and Shao}]{chen2011normal}
Chen, L.~H., Goldstein, L., and Shao, Q.-M. (2011).
\newblock {\em Normal approximation by Stein's method\/}, volume~2.
\newblock Springer.

\bibitem[{Helmers(1985)}]{helmers1985berry}
Helmers, R. (1985).
\newblock \enquote{The Berry-Esseen bound for Studentized U-statistics.}
\newblock {\em Canadian Journal of Statistics\/}, 13(1): 79--82.

\bibitem[{Jing et~al.(2000)Jing, Wang, and Zhao}]{jing2000berry}
Jing, B.-Y., Wang, Q., and Zhao, L. (2000).
\newblock \enquote{The Berry-Ess{\'e}en bound for studentized statistics.}
\newblock {\em The Annals of Probability\/}, 28(1): 511--535.

\bibitem[{Korolyuk and Borovskich(2013)}]{korolyuk2013theory}
Korolyuk, V.~S. and Borovskich, Y.~V. (2013).
\newblock {\em Theory of U-statistics\/}, volume 273.
\newblock Springer Science \& Business Media.

\bibitem[{Lai et~al.(2011)Lai, Shao, and Wang}]{lai2011cramer}
Lai, T.~L., Shao, Q.-M., and Wang, Q. (2011).
\newblock \enquote{Cram{\'e}r type moderate deviations for Studentized
  U-statistics.}
\newblock {\em ESAIM: Probability and Statistics\/}, 15: 168--179.

\bibitem[{Shao(2010)}]{shao2010stein}
Shao, Q.-M. (2010).
\newblock \enquote{Stein's method, self-normalized limit theory and
  applications.}
\newblock In {\em Proceedings of the International Congress of Mathematicians
  2010 (ICM 2010) (In 4 Volumes) Vol. I: Plenary Lectures and Ceremonies Vols.
  II--IV: Invited Lectures\/}, 2325--2350. World Scientific.

\bibitem[{Shao et~al.(2016)Shao, Zhang, and Zhou}]{shao2016stein}
Shao, Q.-M., Zhang, K., and Zhou, W.-X. (2016).
\newblock \enquote{Stein's method for nonlinear statistics: A brief survey and
  recent progress.}
\newblock {\em Journal of Statistical Planning and Inference\/}, 168: 68--89.

\bibitem[{Shao and Zhou(2016)}]{shao2016cramer}
Shao, Q.-M. and Zhou, W.-X. (2016).
\newblock \enquote{Cram{\'e}r type moderate deviation theorems for
  self-normalized processes.}
\newblock {\em Bernoulli\/}, 22(4): 2029--2079.

\bibitem[{Steele(2004)}]{Steele}
Steele, J.~M. (2004).
\newblock {\em The {C}auchy-{S}chwarz master class\/}.
\newblock AMS/MAA Problem Books Series. Mathematical Association of America,
  Washington, DC; Cambridge University Press, Cambridge.
\newblock An introduction to the art of mathematical inequalities.

\bibitem[{Stein(1972)}]{stein1972bound}
Stein, C. (1972).
\newblock \enquote{A bound for the error in the normal approximation to the
  distribution of a sum of dependent random variables.}
\newblock In {\em Proceedings of the sixth Berkeley symposium on mathematical
  statistics and probability, volume 2: Probability theory\/}, volume~6,
  583--603. University of California Press.

\bibitem[{Student(1908)}]{student1908probable}
Student (1908).
\newblock \enquote{The probable error of a mean.}
\newblock {\em Biometrika\/}, 1--25.

\bibitem[{Victor et~al.(2009)Victor, la~Pe{\~n}a, Lai, and
  Shao}]{victor2009self}
Victor, H., la~Pe{\~n}a, D., Lai, T.~L., and Shao, Q.-M. (2009).
\newblock {\em Self-normalized processes: Limit theory and Statistical
  Applications\/}, volume 204.
\newblock Springer.

\bibitem[{Zhao(1983)}]{zhao1983rate}
Zhao, L. (1983).
\newblock \enquote{The rate of the normal approximation for a studentized
  U-statistic.}
\newblock {\em Science Exploration\/}, 3(2): 45--52.

\end{thebibliography}

\end{document}